\titleformat{\section}{\normalfont\large\bfseries}{\thesection}{1em}{}
\titleformat{\subsection}{\normalfont\bfseries}{\thesubsection}{1em}{}
\definecolor{LinkColor}{rgb}{0,0,1}
\definecolor{LinkColor2}{rgb}{0,0.5,0}
\definecolor{lbcolor}{rgb}{0.85,0.85,0.85}
\definecolor{FrameColor}{rgb}{0.85,0.85,0.85}
\definecolor{rosso}{rgb}{0.8,0,0}
\definecolor{lightgray}{rgb}{0.5,0.5,0.5}
\definecolor{violet}{rgb}{0.65,0,0.65}
\definecolor{darkgreen}{rgb}{0,0.5,0}
\newtheorem{theorem}{Theorem}[section]
\newtheorem{lemma}[theorem]{Lemma}
\newtheorem{proposition}[theorem]{Proposition}
\theoremstyle{definition}
\newtheorem{remark}[theorem]{Remark}
\renewenvironment{proof}[1][\proofname]{%
	\par\pushQED{\qed}\normalfont%
	\topsep6\p@\@plus6\p@\relax
	\trivlist\item[\hskip\labelsep\bfseries#1\@addpunct{.}]%
	\ignorespaces
}{%
	\popQED\endtrivlist\@endpefalse
}
\renewcommand\paragraph{\@startsection{paragraph}{4}{\z@}%
	{1ex \@plus1ex \@minus.2ex}%
	{-1em}%
	{\normalfont\normalsize\bfseries}}
\renewcommand\subparagraph{\@startsection{paragraph}{4}{\z@}%
	{1ex \@plus1ex \@minus.2ex}%
	{-1em}%
	{\normalfont\normalsize\itshape}}
\newcommand{\comm}[1]{#1}
\renewcommand{\comm}[1]{}
\newcommand{\abs}[1]{\left| #1 \right|}
\newcommand{\bigabs}[1]{\big| #1 \big|}
\newcommand{\norm}[1]{\| #1 \|}
\newcommand{\bignorm}[1]{\big\| #1 \big\|}
\newcommand{\R}{\mathbb R}
\newcommand{\N}{\mathbb N}
\newcommand{\n}{\mathbf{n}}
\newcommand{\intO}{\int_\Omega}
\newcommand{\dx}{\;\mathrm{d}x}
\newcommand{\dy}{\;\mathrm{d}y}
\newcommand{\dt}{\;\mathrm dt}
\newcommand{\ds}{\;\mathrm ds}
\newcommand{\ddt}{\frac{\mathrm d}{\mathrm dt}}
\newcommand{\delt}{\partial_{t}}
\newcommand{\deln}{\partial_\n}
\newcommand{\Grad}{\nabla}
\newcommand{\Lap}{\Delta}
\newcommand{\Div}{\operatorname{div}}
\newcommand{\emb}{\hookrightarrow}
\newcommand{\ov}{\overline}
\newcommand{\suchthat}{\;\ifnum\currentgrouptype=16 \middle\fi|\;}
\newcommand{\LL}{\mathbf{L}}
\newcommand{\HH}{\mathbf{H}}
\newcommand{\Om}{\Omega}
\newcommand{\TTn}{\mathbb{T}^n}
\newcommand{\vv}{\mathbf{v}}
\newcommand{\tvv}{\tilde{\mathbf{v}}}
\newcommand{\tp}{\tilde{p}}
\newcommand{\tc}{\tilde{c}}
\newcommand{\tmu}{\tilde{\mu}}
\newcommand{\eps}{\varepsilon}
\newcommand{\vveps}{\vv_\eps}
\newcommand{\ceps}{c_\eps}
\newcommand{\mueps}{\mu_\eps}
\newcommand{\peps}{p_\eps}
\newcommand{\vvepsk}{\vv_\eps^k}
\newcommand{\cepsk}{c_\eps^k}
\newcommand{\muepsk}{\mu_\eps^k}
\newcommand{\pepsk}{p_\eps^k}
\newcommand{\Vsigma}{\HH^1_{\sigma}(\Omega)}
\begin{document}

%
%	TITLE PAGE
%

\title{\bfseries 
    Nonlocal-to-local convergence rates \\
    for strong solutions to a Navier-Stokes-Cahn-Hilliard system \\with singular potential
       
}

\author{
    Christoph Hurm\footnotemark[1]
    \and Patrik Knopf\footnotemark[1]
    \and Andrea Poiatti\footnotemark[2]
    }

\date{ }

\maketitle

\renewcommand{\thefootnote}{\fnsymbol{footnote}}

\footnotetext[1]{
    Faculty for Mathematics, 
    University of Regensburg, 
    93053 Regensburg, 
    Germany \newline
	\tt(%
        \href{mailto:christoph.hurm@ur.de}{christoph.hurm@ur.de},
        \href{mailto:patrik.knopf@ur.de}{patrik.knopf@ur.de}%
        ).
}

\footnotetext[2]{
	Faculty of Mathematics, University of Vienna, Oskar-Morgenstern-Platz
1, A-1090 Vienna, Austria\newline
	\tt(%
	\href{mailto:andrea.poiatti@univie.ac.at}{andrea.poiatti@univie.ac.at}%
	).
}

\vspace{-2ex}
\begin{center}
	\scriptsize
	\color{white}
	{
		\textit{This is a preprint version of the paper. Please cite as:} \\  
		C.~Hurm, P.~Knopf, A.~Poiatti, 
        \textit{[Journal]} \textbf{xx}:xx 000-000 (2024), \\
		\texttt{https://doi.org/...}
	}
\end{center}

\medskip

%
%	ABSTRACT
%

\begin{small}
\noindent\textbf{Abstract.}
The main goal of this paper is to establish the nonlocal-to-local convergence of strong solutions to a Navier--Stokes--Cahn--Hilliard model with singular potential describing immiscible, viscous two-phase flows with matched densities, which is referred to as the Model~H. This means that we show that the strong solutions to the nonlocal Model~H converge to the strong solution to the local Model~H as the weight function in the nonlocal interaction kernel approaches the delta distribution. Compared to previous results in the literature, our main novelty is to further establish corresponding convergence rates. Before investigating the nonlocal-to-local convergence, we first need to ensure the strong 
well-posedness of the nonlocal Model~H. In two dimensions, this result can already be found in the literature, whereas in three dimensions, it will be shown in the present paper. Moreover, in both two and three dimensions, we establish suitable uniform bounds on the strong solutions of the nonlocal Model~H, which are essential to prove the nonlocal-to-local convergence results.
\\[1ex]
\textbf{Keywords:} Nonlocal-to-local convergence, Navier--Stokes equation, nonlocal Cahn--Hilliard equation, two-phase flows, singular potential, strong solutions, convergence rates.
\\[1ex]	
\textbf{Mathematics Subject Classification:} 
Primary: 
35Q35.  % PDEs in connection with fluid mechanics
Secondary:
35K55, % Nonlinear parabolic equations
35Q30, % Navier-Stokes equations
45K05, % Integro-partial differential equations
76D03, % Existence, uniqueness, and regularity theory for incompressible viscous fluids
76D05. % Navier-Stokes equations for incompressible viscous fluids

\end{small}

\begin{small}
\setcounter{tocdepth}{2}
\hypersetup{linkcolor=black}
\tableofcontents
\end{small}

\setlength\parskip{1ex}
\allowdisplaybreaks
\numberwithin{equation}{section}
%\linenumbers
\renewcommand{\thefootnote}{\arabic{footnote}}

\newpage

\section{Introduction} 
\label{SECT:INTRO}

The mathematical description of two-phase flows is an important but very challenging topic of modern fluid dynamics with various applications in biology, chemistry and engineering. The motion of a mixture of two immiscible fluids, both having a constant individual density, can essentially be captured by describing the motion of the interface separating the fluids. Therefore, two fundamental mathematical approaches have been developed: \textit{sharp-interface models} and the \textit{diffuse-interface models}.
In sharp-interface models, the interface between the fluids is represented as an evolving hypersurface, which leads to a free boundary problem.
In diffuse-interface models (also referred to as phase-field models), the interface is approximated by a thin tubular neighborhood. The concentrations (or volume fractions) of the two fluids are represented by an order parameter, the so-called phase-field. Except at the diffuse interface, this phase-field will attain values close to $-1$ or $1$ as these values respresent the two fluids, respectively. At the diffuse interface, we expect the phase-field to exhibit a continuous transition between $-1$ and~$1$.
The main advantage of this method is that the time evolution of the phase-field can be described by a PDE system in Eulerian coordinates. This avoids directly tracking the interface as it needs to be done in free boundary problems. In many cases, diffuse-interface models can be related to a corresponding sharp-interface model by the so-called sharp-interface limit, where the interfacial width is sent to zero. For more details on the two approaches, especially in the context of two-phase flows, we refer to \cite{AbelsGarckeReview} and the references therein.

In this work, we investigate a Navier-Stokes-Cahn-Hilliard system known as the \textit{Model~H} (both its local and its nonlocal version), which is a diffuse-interface model describing the time evolution of two immiscible, viscous fluids with \textit{matched densities}. This means that the individual densities of the two fluids can be approximately considered as equal.

Depending on a parameter $\eps\ge 0$, the local and the nonlocal version of the Model~H can be formulated simultaneously as follows.
For $n\in\{2,3\}$, let $\Omega$ either be a bounded domain in $\R^n$, whose boundary $\Gamma:=\partial\Omega$ is of class $C^3$, or let $\Omega$ be the $n$-dimensional torus $\TTn := \big[ \R / \big((2\mathbb Z + 1)\pi\big) \big]^n$.
For any final time $T>0$, we write
$\Omega_T := \Omega\times(0,T)$ and if $\Omega$ is a bounded domain, we further use the notation $\Gamma_T := \Gamma \times (0,T)$.
Then, the following Navier--Stokes--Cahn--Hilliard system is referred to as the \textit{Model~H}:
\begin{subequations}
\label{ModelH}
\begin{align}
    \label{ModelH:NS}
	&\rho\big(\delt \vv + (\vv \cdot \Grad)\vv\big) - \nu \Lap\vv + \Grad p = \mu\Grad c,
    \quad \Div(\vv) = 0
    &&\text{in}\;\Omega_T,
    \\
    \label{ModelH:CH1}
	&\delt c + \vv\cdot\Grad c = m\Delta\mu\qquad&&\text{in}\;\Omega_T, 
    \\
    \label{ModelH:CH2}
	&\mu = \mathcal{L}_\eps c + f^\prime(c)\qquad&&\text{in}\;\Omega_T,
    % \\
    % \label{ModelH:BC}
    % &\vv = 0,\quad \deln \mu =0
    % &&\text{on}\; \Gamma_T,
    \\
    \label{ModelH:IC}
    &\vv\vert_{t=0} = \vv_0, \quad c\vert_{t=0} = c_0
    &&\text{in}\; \Omega.
\end{align}
In case $\Omega$ is a bounded domain, we further impose the standard boundary conditions
\begin{align}
    \label{ModelH:BC}
    \vv = 0,\quad \deln \mu =0
    \quad\text{on}\; \Gamma_T,
\end{align}
\end{subequations}
and if $\Omega=\TTn$, we assume periodic boundary conditions.
Here, $\vv:\Omega_T\to\R^n$ denotes the \textit{velocity field} associated with the mixture of two fluids, $p:\Omega_T\to\R$ represents the corresponding \textit{pressure}, $c:\Omega_T\to \R$ is the \textit{phase-field} and $\mu:\Omega_T\to\R$ denotes the \textit{chemical potential}. 
The quantities $\rho$, $\nu$ and $m$ represent the \textit{mass density} of the mixture, the \textit{kinematic viscosity}, and the \textit{mobility}, respectively, which are all assumed to be positive constants.
The function $f'$ is the derivative of a potential $f$, which is usually double-well shaped. A physically relevant choice is the logarithmic potential
\begin{equation}
    \label{DEF:FLOG}
    f_\mathrm{log}(s) := \frac{\theta}{2} \big[(1+s)\,\ln(1+s) +(1-s)\,\ln(1-s)\big]
            - \frac{\theta_0}{2}(1-s^2)
\end{equation}
for all $s\in (-1,1)$,
which is also referred to as the \textit{Flory--Huggins potential}. It is classified as a singular potential as its derivative tends to $\pm\infty$ as its argument approaches $\pm1$. In our mathematical analysis, we will even be able to handle a more general class of singular potentials that will be specified by the assumptions \ref{ASS:S1}--\ref{ASS:S3}.

For any $\eps\ge 0$ and a sufficiently regular function $u:\Omega\to\R$, the operator $\mathcal{L}_\eps$ appearing in \eqref{ModelH:CH2} is defined as
\begin{align}
    \label{DEF:NL:INTRO}
	\mathcal{L}_\eps u(x) := 
    \left\{
    \begin{aligned}
         &\intO J_\eps(|x-y|) \big( u(x) - u(y) \big) \dy
         &&\text{if $\eps>0$},
         \\[1ex]
         &-\Lap u(x)
         &&\text{if $\eps=0$},
    \end{aligned}
    \right.
\end{align}
for all $x\in\Omega$. Here, $J_\eps$ is a suitable nonnegative interaction kernel, whose exact properties will be specified in \ref{ASS:JEPS}. 

In the case $\eps=0$, $\mathcal{L}_0 = -\Lap$ is a local differential operator, where $\Lap$ denotes the Laplace operator subject to the homogeneous Neumann boundary condition $\deln u = 0$ on $\partial\Omega$ if $\Omega$ is a bounded domain, and the Laplace operator with periodic boundary conditions if $\Omega$ is the torus $\TTn$.
Therefore, system \eqref{ModelH} with $\eps=0$ will be called the \textit{local Model~H}. 
It has already been proposed in \cite{HohenbergHalperin}, and a mathematical derivation was provided later in \cite{GurtinPolignoneVinals}. For the analysis of the local Model~H, we refer to \cite{Abels2009, Boyer, GalGrasselli2010,GMT2019} and references therein.
Variants of the local Model~H with dynamic boundary conditions, which allow for a better description of short-range interactions between the fluids on the boundary of the domain, have been proposed an analyzed, for instance, in \cite{GGM2016,GGW2019,Giorgini2023,GGP}.
We further point out that a generalization of the local Model~H that also covers the situation of \textit{unmatched densities} (i.e., both fluids may have different individual densities) has been derived in \cite{AGG} and is known as the \textit{AGG Model}. It has been analyzed, for instance, in \cite{Abels2023,abels2013existence,abels2013incompressible,AbelsWeber2021,Giorgini2021, giorgini2022-3D}. A variant of the same model allowing to treat multi-phase fluids is also studied in \cite{AGP}. 

In the case $\eps>0$, $\mathcal{L}_\eps$ is a nonlocal operator since for any $x\in\Omega$, $\mathcal{L}_\eps u(x)$ depends on all values $u(y)$ with $y\in\Omega$. 
Therefore, system \eqref{ModelH} with $\eps>0$ is referred to as the \textit{nonlocal Model~H}. 
In contrast to the local Model~H, where only short-range interactions between the fluids are taken into account by the differential operator $\mathcal{L}_0$, the nonlocal Model~H also describes long-range interactions between the materials, which are weighted by the interaction kernel $J_\eps$.
To the best of our knowledge, the nonlocal Model~H has first been investigated in \cite{Colli2012}. Afterwards, it has further been analyzed, for instance, in \cite{Frigeri2012,Frigeri2012a,Frigeri2013,Frigeri2015,Frigeri2016,Frigeri2021,AGGP}. 

The system \eqref{ModelH} is associated with the energy functional
\begin{align}
    \label{DEF:EN}
    E_\eps(\vv,c) 
    =  \intO \frac \rho 2 |\vv(x)|^2 \,\mathrm{d}x 
    + \mathcal{E}_\eps(c)
    + \intO f\big(c(x)\big) \,\mathrm{d}x
\end{align}
where, depending on the choice of $\eps$, the contribution $\mathcal{E}_\eps(c)$ is defined as
\begin{align}
    \label{DEF:EN:CH}
    \mathcal{E}_\eps(c)
    := 
    \left\{
    \begin{aligned}
         &\frac14\intO\intO J_{\eps}(x-y)\big| c(x) -  c(y)\big|^2\:\mathrm{d}y\mathrm{d}x
         &&\text{if $\eps>0$},
         \\[1ex]
         &\frac12 \intO\intO \big| \Grad c(x)\big|^2 \,\mathrm{d}x
         &&\text{if $\eps=0$}.
    \end{aligned}
    \right.
\end{align}
The first summand on the right-hand side of \eqref{DEF:EN} represents the kinetic energy, whereas the last two summands in \eqref{DEF:EN} represent the free energy of the mixture, which is either of Ginzburg--Landau type ($\eps=0$) or of Helmholtz type ($\eps>0$). 

For any $\eps\ge 0$, sufficiently regular solutions of the Model H \eqref{ModelH} satisfy the \textit{mass conservation law}
\begin{align}
    \label{mass}
    \intO c(t) \dx = \intO c_0 \dx
    \quad\text{for all $t\in [0,T]$}
\end{align}
as well as the \textit{energy dissipation law}
\begin{align}
    \label{energy}
    \ddt E_\eps\big(\vv(t),c(t)\big)
    = - \nu \intO \abs{\Grad \vv(t)}^2 \dx 
        - m \intO \abs{\Grad \mu}^2 \dx
    \quad\text{for all $t\in [0,T]$.}
\end{align}
In the case $\Omega=\TTn$, we further have
\begin{align}
    \label{avg}
    \intO \vv(t) \dx = \intO \vv_0 \dx
    \quad\text{for all $t\in [0,T]$.}
\end{align}

As shown in \cite{Ponce,Ponce2}, the nonlocal energies $E_\eps$ with $\eps>0$ and the local energy $E_0$ can be related via the nonlocal-to-local convergence
\begin{align}
    \label{NLTL:EN}
    \mathcal{E}_\eps(c) \to \mathcal{E}_0(c)
    \qquad\text{as $\eps\searrow 0$,}
\end{align}
provided that $c\in H^1(\Omega)$. Based on this result, the nonlocal-to-local convergence
\begin{align}
    \label{NLTL:LE}
    \mathcal{L}_\eps(c) \to \mathcal{L}_0(c)
    \qquad\text{as $\eps\searrow 0$}
\end{align}
as well as corresponding nonlocal-to-local convergence results (without rates) for the Cahn--Hilliard equation were established in \cite{DST,DST2,DST3,EJ,Melchionna2019}. The nonlocal-to-local convergence of the Model~H (without rates) has already been established in \cite{Abels2022}. In fact, even the more general case of unmatched densities was covered there.

Recently, in \cite{abels2023strong}, stronger nonlocal-to-local convergence results for the operator $\mathcal{L}_\eps$ (which will be recalled in Proposition~\ref{PROP:NLTL}) were obtained and applied to the Allen--Cahn equation and the Cahn--Hilliard equation. The most substantial improvement of these new results is that concrete rates for the convergence \eqref{NLTL:LE} could be shown. However, compared to previous results, higher regularity of the function $c$ is required.

\paragraph{Outline of this paper.}
In the present contribution, we intend to prove the nonlocal-to-local convergence of the Model~H~\eqref{ModelH} along with corresponding convergence rates. Therefore, in order to apply the convergence results established in \cite{abels2023strong}, we have to consider strong solutions of both the local and the nonlocal Model~H. In this regard, the coupling with the Navier--Stokes equation leads to additional difficulties compared to the results for nonlocal-to-local convergence of the Cahn--Hilliard equation. For example, in three dimensions, we can merely expect local-in-time existence of strong solutions to the Model~H as the global existence of strong solutions of the Navier--Stokes equation is a well-known open problem.

Concerning the existence and uniqueness of weak and strong solutions, the local Model~H is already very well understood. It is also known that strong solutions satisfy the so-called \textit{strict separation property}. This means that the phase-field attains its values only in a strict subinterval of $(-1,1)$ and is thus separated from the pure phases that are associated with $\pm1$ (see~\eqref{SP-Starloc} and~\eqref{SP-Starloc2}). For more details about separation properties of the local Cahn--Hilliard equation, we refer to \cite{MZ,GGM,Gal2023,GKW,GP}. 
All the aforementioned results will be recalled in Proposition~\ref{PROP:WP:LOCAL}. 
We point out that the strict separation property of strong solutions to the local Model~H will be a crucial ingredient in the proof of nonlocal-to-local convergence.

For the nonlocal Model~H, the existence of a weak solution has already been proven in \cite{Frigeri2021}. Moreover, in two dimensions, the strong well-posedness has been established in \cite{GGG2017} (see also \cite{AGGP}). However, apparently, the strong well-posedness in three dimensions has not yet been addressed in the literature. Therefore, in Theorem~\ref{THM:WP:NONLOCAL}, we collect the existence and uniqueness results in two dimensions and we prove the local-in-time strong well-posedness in three dimensions. Moreover, we establish certain bounds on weak and strong solutions, which are independent of the parameter $\eps$. These uniform bounds will be essential in the proof of nonlocal-to-local convergence for strong solutions. We further show that for any $\eps>0$, the strong solution satisfies a strict separation property, which holds as long as the solution exists. However, we are not able to exploit this strict separation result to prove the nonlocal-to-local convergence, since the confinement interval is not uniform in $\eps$. We will thus resort to a different technique. 
The proof of Theorem~\ref{THM:WP:NONLOCAL} is presented in Section~\ref{PROOF:WELLPOSEDNESS}.
For more details about separation properties for the nonlocal Cahn--Hilliard equation, we refer to \cite{GGG2017,P}, in which the first results in 2D and 3D, respectively, are shown (see also, for instance, \cite{Gal2023,GP} and references therein).

Eventually, in Theorem~\ref{THM:NLTL}, we establish the nonlocal-to-local convergence of strong solutions to the Model~H \eqref{ModelH} as $\eps\to 0$ along with associated convergence rates. The proof of this result is presented in Section~\ref{PROOF:NLTL}.

% \newpage

\section{Notation and preliminaries} 
\label{SECT:PRELIM}

In this section, we introduce some notation, assumptions and preliminaries that are supposed to hold throughout the remainder of this paper.

\subsection{Notation}

We start by introducing some notation.

\begin{enumerate}[label=\textnormal{(N\arabic*)},leftmargin=*]

\item \textbf{Notation for general Banach spaces.} 
For any normed space $X$ of scalar-valued functions, we denote its norm by $\|\cdot\|_X$,
its dual space by $X^*$ and the duality pairing between $X^*$ and $X$ by $\langle\cdot,\cdot\rangle_X$.
Besides, if $X$ is a Hilbert space, we write $(\cdot,\cdot)_X$ to denote the corresponding inner product.
Furthermore, for any vector space $X$, corresponding spaces of vector-valued or matrix-valued functions with each component belonging to $X$ are denoted by $\mathbf{X}$.

\item \textbf{Lebesgue and Sobolev spaces.} 
For any $n\in\N$, let now $\Omega$ be either a bounded domain in $\R^n$ of class $C^3$ or the torus 
$\TTn$, which accounts for periodic boundary conditions.
For $1 \leq p \leq \infty$ and $k \in \N$, the standard Lebesgue spaces and Sobolev spaces defined on $\Omega$ are denoted by $L^p(\Omega)$ and $W^{k,p}(\Omega)$, and their standard norms are denoted by $\|\cdot\|_{L^p(\Omega)}$ and $\|\cdot\|_{W^{k,p}(\Omega)}$, respectively.
In the case $p = 2$, we use the notation $H^k(\Omega) = W^{k,2}(\Omega)$. We point out that $H^0(\Omega)$ can be identified with $L^2(\Omega)$.
For simplicity, we just write $(\cdot,\cdot) := (\cdot,\cdot)_{L^2(\Omega)}$, $\|\cdot\|:=\|\cdot\|_{L^2(\Omega)}$ and $\langle\cdot,\cdot\rangle := \langle\cdot,\cdot\rangle_{H^1(\Omega)}$.

Moreover, for any interval $I\subset\R$, any Banach space $X$, $1 \leq p \leq \infty$ and $k \in \N$, we write $L^p(I;X)$, $W^{k,p}(I;X)$ and $H^{k}(I;X) = W^{k,2}(I;X)$ to denote the Lebesgue and Sobolev spaces of functions with values in $X$. The standard norms are denoted by $\|\cdot\|_{L^p(I;X)}$, $\|\cdot\|_{W^{k,p}(I;X)}$ and $\|\cdot\|_{H^k(I;X)}$, respectively. We further define
\begin{align*}
    L^p_\mathrm{loc}(I;X) 
    &:=
    \big\{ 
        u:I\to X \,\big\vert\, u \in L^p(J;X) \;\text{for every compact interval $J\subset I$}
    \big\}
    \\[1ex]
    L^p_\mathrm{uloc}(I;X) 
    &:=
    \left\{ u:I\to X \,\middle|\,
    \begin{aligned}
    &u \in L^p_\mathrm{loc}(I;X) \;\text{and}\; \exists C>0\; \forall t\in\R:\\
    &\|u\|_{L^p(I\cap[t,t+1);X)} \le C
    \end{aligned}
    \right\}.
\end{align*}
The spaces $W^{k,p}_\mathrm{loc}(I;X)$, $H^k_\mathrm{loc}(I;X)$, $W^{k,p}_\mathrm{uloc}(I;X)$, $H^k_\mathrm{uloc}(I;X)$ are defined analogously.

\item \textbf{Spaces of continuous functions.}
For any interval $I\subset\R$ and any Banach space $X$, $C(I;X)$ denotes the space of continuous functions mapping from $I$ to $X$ and $BC(I;X)$ denotes the space of functions in $C(I;X)$, which are additionally bounded. Moreover, $C_\mathrm{w}(I;X)$ denotes the space of functions mapping from $I$ to $X$, which are continuous on $I$ with respect to the weak topology on $X$, and $BC_\mathrm{w}(I;X)$ denotes the space of functions in $C_\mathrm{w}(I;X)$, which are additionally bounded.

\item \textbf{Spaces of functions with zero mean.}
For any $f\in H^1(\Omega)'$, its generalized spatial mean is defined as
\begin{equation*}
    \overline{f}:= |\Omega |^{-1} \langle f,1 \rangle,
\end{equation*}%
where $|\Omega |$ stands for the $n$-dimensional Lebesgue measure of
$\Omega$. 
Using this definition, we introduce the following function spaces:
\begin{align*}
    H^{-1}_{(0)}(\Omega) &:= \big\{ u\in H^1(\Omega)' \,:\, \ov u = 0 \big\} \subset H^1(\Omega)',\\
    L^2_{(0)}(\Omega) &:= \big\{ u\in L^2(\Omega) \,:\, \ov u = 0 \big\} \subset L^2(\Omega),\\
    H^1_{(0)}(\Omega) &:= \big\{ u\in H^1(\Omega) \,:\, \ov u = 0 \big\} \subset H^1(\Omega).
\end{align*}
As closed linear subspaces of the respective Hilbert space, these spaces are also Hilbert spaces.

\item \textbf{Spaces of divergence-free functions.}
If $\Omega$ is a bounded domain, we define the closed linear subspaces
\begin{align*}
    \LL^2_\sigma(\Omega)
    &:=\overline{\{\mathbf{u}\in \mathbf{C}^\infty_0(\Omega) \,\big\vert\, \operatorname{div}\ \mathbf{u}=0\}}^{\mathbf{L}^2(\Omega)}
    \subset \LL^2(\Omega), 
    \\
    \Vsigma
    &:=\overline{\{\mathbf{u}\in \mathbf{C}^\infty_0(\Omega) \,\big\vert\, \operatorname{div}\ \mathbf{u}=0\}}^{\mathbf{H}^1(\Omega)}
    \subset \HH^1(\Omega).
\end{align*}
In the case $\Omega =\TTn$, the corresponding closed linear subspaces are defined as
\begin{align*}
    \LL^2_\sigma(\Omega)
    &:=\overline{\big\{\mathbf{u}\in \mathbf{C}^\infty(\Omega) \,\big\vert\, 
    \Div\mathbf{u}=0 \;\text{and}\; \ov{\mathbf{u}} = 0
    \big\}}^{\mathbf{L}^2(\Omega)} \subset \LL^2(\Omega), 
    \\
    \Vsigma
    &:=\overline{\big\{\mathbf{u}\in \mathbf{C}^\infty(\Omega) \,\big\vert\, 
    \Div\mathbf{u}=0 \;\text{and}\; \ov{\mathbf{u}} = 0
    \big\}}^{\mathbf{H}^1(\Omega)} \subset \HH^1(\Omega).
\end{align*}
In both cases, Korn's inequality yields
\begin{equation}
\Vert \mathbf{u}\Vert \leq \sqrt{2}\Vert D\mathbf{u}%
\Vert\leq \sqrt{2}\Vert \Grad \mathbf{u}\Vert
\quad \text{ for all } \mathbf{u}\in \Vsigma.
\label{korn}
\end{equation}
Hence, $\|\Grad\cdot\|$ is a norm on $\Vsigma$ that is equivalent to the standard norm $\|\cdot\|_{\mathbf{H}^1(\Omega)}$.
\end{enumerate}  

\subsection{Assumptions}

The following general assumptions are supposed to hold throughout this paper.

\begin{enumerate}[label=\textnormal{(A\arabic*)},leftmargin=*]
    \item \label{ASS:GEN} %\textbf{General assumptions.}
    For $n\in\{2,3\}$, we either choose $\Omega$ to be a bounded domain in $\R^n$ of class $C^3$ or we take $\Omega$ to be the torus
    \begin{equation*}
        \TTn := \big[ \R / \big((2\mathbb Z + 1)\pi\big) \big]^n.
    \end{equation*}
    %%%
    \item \label{ASS:RHONU}
    The density $\rho$, the viscosity $\nu$ and the mobility $m$ are positive constants. For convenience, we set $\rho=\nu=m=1$. This does not mean any loss of generality as the explicit choice of these positive constants does not have any impact on the mathematical analysis. 
    %%%
    \item \label{ASS:JEPS} %\textbf{Assumptions on the convolution kernel.}
    Let $\Omega$ be given as in \ref{ASS:GEN}.
    For any $\eps>0$, let $\rho_\eps \in L^1\big(\R;[0,\infty)\big)$ be a given function satisfying the conditions
    \begin{align*}
		&\int_{0}^\infty\rho_\eps(r)\:r^{n-1}\:\text{d}r = \frac{2}{C_n},
        \quad\text{where}\quad
        C_n := \int_{\mathbb{S}^{n-1}}|\sigma_1|^2\:\text{d}\mathcal{H}^{n-1}(\sigma),
        \\
		&\lim\limits_{\eps\searrow 0}\int_{\delta}^\infty\rho_\eps(r)\:r^{n-1}\:\text{d}r = 0\;\;\;\text{for all }\delta>0.
	\end{align*}
    If $\Omega=\TTn$, we further demand that for all $\eps>0$, $\rho_\eps$ is compactly supported in $[0,\pi)$.
 
    For $X=\R^n$ if $\Omega$ is a bounded domain or $X=\TTn$ if $\Omega=\TTn$, we define
    \begin{equation*}
        J_\eps: X\rightarrow[0,\infty),
        \quad
        J_\eps(x) = \frac{\rho_\eps(|x|)}{|x|^2}
        \quad\text{for all $x\in X$ and all $\eps>0$},
    \end{equation*}
    and we additionally assume that $\rho_\eps$ is designed in such a way that $J_\eps \in W^{1,1}(X)$ (see, for instance, \cite{abels2023strong}).
\end{enumerate}

\medskip

For the singular potential in the free energy functional, we make the following assumptions, which not necessarily need to hold at the same time. We will specify further which of these assumptions are actually are needed in each stated result.
\begin{enumerate}[label=\textnormal{(S\arabic*)},leftmargin=*]
    \item \label{ASS:S1} 
    The potential $f:[-1,1]\to \R$ exhibits the decomposition
    \begin{equation*}
        f(s)=F(s)-\frac{\theta_0}{2}s^2 \quad\text{for all $s\in [-1,1]$}
    \end{equation*}
    with a given constant $\theta_0>0$. 
    Here, $F\in C([-1,1])\cap C^{2}(-1,1)$ has the properties 
    \begin{equation*}
    \lim_{r\rightarrow -1}F^{\prime }(r)=-\infty ,
    \quad \lim_{r\rightarrow 1}F^{\prime }(r)=+\infty ,
    \quad F^{\prime \prime }(s)\geq {\theta},
    \quad F'(0)=0
    \end{equation*}
    for all $s\in (-1,1)$ and a prescribed constant $\theta\in(0,\theta_0)$.
    Without loss of generality, we further assume $F(0)=0$. 
    In particular, this means that $F(s)\geq 0$ for all $s\in [-1,1]$.

    For convenience, we extend $f$ and $F$ onto $\R\setminus[-1,1]$ by defining 
    $f(s):=+\infty $ and $F(s):=+\infty $ for all $s\in\R\setminus [-1,1]$. 
    %%%%%
    \item \label{ASS:S2} In addition to \ref{ASS:S1}, there exists $\beta>\frac12$ such that
    \begin{equation}
    \frac{1}{F^{\prime }(1-2\delta )}=O\left( \frac{1}{|\ln (\delta )|^{\beta }}%
    \right) ,\quad\text{ }\dfrac{1}{|F^{\prime }(-1+2\delta )|}=O\left( \frac{1}{%
    |\ln (\delta )|^{\beta }}\right) .  \label{est}
    \end{equation}
    as $\delta\to 0^+$.
    %%%%%
    \item\label{ASS:S3} In addition to \ref{ASS:S1}, it holds
    \begin{alignat}{2}
    	\frac{1}{F^{\prime}(1-2\delta)}&=O\left(\frac{1}{\vert\ln(\delta)\vert}\right),
        &\quad\frac{1}{F^{\prime\prime}(1-2\delta)}&=O(\delta),
    	\label{F}
        \\
    	\dfrac{1}{\vert F^{\prime}(-1+2\delta)\vert }&=O\left(\frac{1}{\vert\ln(\delta)\vert}\right),
        &\quad\dfrac{1}{F^{\prime\prime}(-1+2\delta)}&=O\left(\delta\right).
    	\label{F2}
    \end{alignat}
    as $\delta\to 0^+$.
    Moreover, there exists $\gamma_{0}>0$ such that $F^{\prime \prime }$ is monotonously increasing on $(-1,-1+\gamma_0]$ and on $[1-\gamma _{0},1)$.
\end{enumerate}
%%%%%

\medskip

\begin{remark}\label{REM:LOG}
We point out that the logarithmic potential (also known as the \textit{Flory--Huggins potential}), which is given by
\begin{equation}
    f_\mathrm{log}(s)=F_\mathrm{log}(s)-\frac{\theta_0}{2}s^2 \quad\text{for all $s\in [-1,1]$}
    \label{f:LOG}
\end{equation}
with $F_\mathrm{log}(\pm 1) = \theta\ln(2)$ and
\begin{equation}
    F_\mathrm{log}(s)=\frac{\theta}{2}((1+s)\text{ln}(1+s)+(1-s)\text{ln}(1-s))
    \quad\text{for all $s\in(-1,1)$},
\label{F:LOG}
\end{equation}
satisfies all assumptions \ref{ASS:S1}--\ref{ASS:S3}. However, the assumptions \ref{ASS:S1}--\ref{ASS:S3} allow for a much more general class of potentials (see, e.g., \cite{GP} for a discussion).
\end{remark}

\comm{
{\color{red}[POSITIVE FACT]: I just realized that most of the estimates, and in particular the final result of convergence should work also for bounded domains! Indeed, with bounded domains we only lose controls on $\Grad \ceps$, which is not needed indeed!}
}

\subsection{Preliminaries}

\begin{enumerate}[label=\textnormal{(P\arabic*)},leftmargin=*]
%%%
\item\label{PRE:LAP} \textbf{The Laplace operator and its inverse.}
It is well-known that the operator
\begin{equation*}
    \mathcal{A} :H_{(0)}^{1}(\Omega )\rightarrow H_{(0)}^{-1}(\Omega ) ,
    \quad
    \left\langle \mathcal{A}  u,v\right\rangle_{H^1_{(0)}(\Omega)} =
    (\Grad u,\Grad v)
    \quad\text{for all $v\in H_{(0)}^{1}(\Omega )$}
\end{equation*}
is a continuous linear isomorphism. If $\Omega$ is a bounded domain, $\mathcal{A}$ can be interpreted as the negative Laplace operator with homogeneous Neumann boundary condition, and if $\Omega$ is the torus $\TTn$, $\mathcal{A}$ represents the Laplace operator with periodic boundary conditions.
We denote the inverse of $\mathcal{A}$, which is a bounded linear operator, by 
\begin{equation*}
    \mathcal{N}=\mathcal{A} ^{-1}:H_{(0)}^{-1}(\Omega )\rightarrow H_{(0)}^{1}(\Omega).
\end{equation*}
For any $g,h\in H_{(0)}^{-1}(\Omega )$, we set
\begin{equation*}
    (g,h)_{\ast} := \big(\Grad \mathcal{N}g,\Grad \mathcal{N}h\big),
    \quad
    \Vert g\Vert_{\ast }:=\Vert \Grad \mathcal{N}g\Vert.
\end{equation*}
This defines a bilinear form $(\cdot,\cdot)_{\ast}$ which is an inner product on the Hilbert space $H_{(0)}^{-1}(\Omega )$. Its induced norm $\Vert \cdot\Vert_{\ast}$ is equivalent to the standard operator norm on this space.

Moreover, due to elliptic regularity theory, there exists a constant $C>0$ such that for all $g\in L_{(0)}^{2}(\Omega)$,
\begin{equation}
\Vert \mathcal{N}g\Vert_{H^{2}(\Omega )}\leq C\Vert g\Vert.
\label{H_2}
\end{equation}

We further point out that the mapping 
$g\mapsto \big(\Vert g-\overline{g}
\Vert _{\ast }^{2}+|\overline{g}|^{2}\big)^{\frac{1}{2}}$ defines a norm $H^{1}(\Omega )^{\prime }$ that is equivalent to the standard operator norm on this space. 
%%%
\item\label{PRE:STOKES} \textbf{The Stokes operator and its inverse.}
The \textit{Stokes operator}, which is defined as
\begin{equation*}
    A_S: \Vsigma \to \Vsigma', \quad \mathbf{u}\mapsto (\Grad \mathbf{u},\Grad\vv)
    \quad\text{for all $\vv\in \Vsigma$}
\end{equation*}
is a continuous linear isomorphism.
For any $\vv, \mathbf{w} \in \Vsigma$, we set
\begin{equation*}
    (\vv,\mathbf{w})_{\sigma} := \big(\Grad A_S^{-1}\vv,\Grad A_S^{-1}\mathbf{w}\big),
    \quad
    \Vert \vv \Vert_{\sigma}:=\Vert \Grad A_S^{-1}\vv\Vert.
\end{equation*}
This defines a bilinear form $(\cdot,\cdot)_{\sigma}$, which is an inner product on the Hilbert space $\Vsigma'$.
Its induced norm $\norm{\cdot}_{\sigma}$ is equivalent to the standard operator norm on this space. In particular, due to Poincar\'e's inequality, there exists a constant $C_{S,1}>0$ such that for all $\mathbf{u}\in \Vsigma'$, it holds
\begin{align}
    \label{Stokes1}
    \norm{A_S^{-1} \mathbf{u}}_{\HH^1(\Omega)} \le C_{S,1} \norm{\mathbf{u}}_\sigma.
\end{align}
Moreover, due to regularity theory for the Stokes operator, there exists a constant $C_{S,2}>0$ such that for all $\mathbf{u}\in \LL^2_\sigma(\Omega)$, it holds
\begin{equation}
\Vert A_S^{-1}\mathbf{u}\Vert _{\HH^{2}(\Omega )}\leq C_{S,2} \Vert \mathbf{u}\Vert .
\label{Stokes2}
\end{equation}
In particular, using the \textit{Leray--Helmholtz projector} $\mathbf{P}_\sigma: \mathbf{L}^2(\Omega) \to \LL^2_\sigma(\Omega)$, we obtain the representation
\begin{equation*}
    A_S\big\vert_{\Vsigma\cap \mathbf{H}^2(\Omega)} = - \mathbf{P}_\sigma \Lap,
\end{equation*}
with $\Lap$ being the standard Laplace operator.
%%%
\end{enumerate}

\subsection{Known results and important tools}

In this section we collect some important results, which will play a crucial role in our subsequent analysis.

\subsubsection{Energy estimates}

In the following lemma, we provide some energy estimates that will be used frequently in our mathematical analysis.

\begin{lemma}\label{LEM:EN}
Let $\eps>0$ and let $J_\eps$ satisfy assumption \ref{ASS:JEPS}.
We use the notation
\begin{equation*}
    \mathcal{F}_\eps
    (u,v) := \frac14\intO\intO J_{\eps}(x-y)\big| u(x) -  v(y)\big|^2\:\textup{d}y\textup{d}x
\end{equation*}
and in accordance with \eqref{DEF:EN:CH}, we set
\begin{equation*}
    \mathcal{E}_\eps(u):=\mathcal{F}_\eps(u,u),
\end{equation*}
Then, the following estimates hold.
\begin{enumerate}[label=\textnormal{(\alph*)},topsep=0em, partopsep=0em,leftmargin=*]
    \item\label{EN:IEQ:1} For every $\gamma>0$, there exist constants $C_\gamma>0$ and $\eps_\gamma>0$ such that  
    \begin{align}
        \|u\|_{H^1(\Omega)}^2 &\leq \mathcal{E}_\eps(\Grad u) + C_\gamma\|u\|^2.
    \label{Poinc}
    \end{align}
    for all $\eps\in(0,\eps_\gamma]$ and all $u\in H^1(\Omega)$.
    %%%
    \item\label{EN:IEQ:2} For every $\gamma>0$, there exist constants $C_\gamma>0$ and $\eps_\gamma>0$ such that  
    \begin{align}
        \|u\|^2 
        &\leq \gamma\mathcal{E}_\eps(u)  + C_\gamma\|u\|_{\ast}^2.
    \label{Poinc3}
    \end{align}
    for all $\eps\in(0,\eps_\gamma]$ and all $u\in L^2(\Omega)$.
    %%%
\end{enumerate}
\end{lemma}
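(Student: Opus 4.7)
The plan is to prove both inequalities as nonlocal Poincaré-type estimates by combining the Bourgain--Brezis--Mironescu/Ponce convergence $\mathcal{E}_\eps(v)\to\tfrac{1}{2}\|\Grad v\|^2$ recalled in \eqref{NLTL:EN} with an abstract compactness argument. The central ingredient is a \emph{uniform-in-$\eps$ reverse BBM inequality}: for any $\gamma>0$ there exists $\eps_\gamma>0$ such that, for every $v\in H^1(\Omega)$ and every $\eps\le\eps_\gamma$, one has $\|\Grad v\|^2\le(1+\gamma)\,\mathcal{E}_\eps(v)+C_\gamma\|v\|^2$. This is a standard consequence of Ponce's estimates adapted to the $C^3$-domain (or torus) setting via the structural properties of $J_\eps$ imposed in \ref{ASS:JEPS}.

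For \textbf{part (a)}, I would apply this uniform reverse inequality componentwise to $\partial_i u$. This yields $\|\Grad\partial_i u\|^2\le(1+\gamma)\mathcal{E}_\eps(\partial_i u)+C_\gamma\|\partial_i u\|^2$ for every $\eps\le\eps_\gamma$. A Gagliardo--Nirenberg interpolation bounds $\|\Grad u\|^2$ by a small multiple of $\|\Grad^2 u\|^2$ plus a constant times $\|u\|^2$, so summing over $i$ and absorbing the $\|\Grad u\|^2$-contribution to the left yields the desired $\|u\|_{H^1}^2$-bound with $\mathcal{E}_\eps(\Grad u)$ on the right. Whenever $u\in H^1\setminus H^2$ the right-hand side is $+\infty$ and the inequality is vacuous, so nothing more needs to be checked.

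For \textbf{part (b)}, I would proceed by contradiction. If the estimate failed for some $\gamma>0$, there would exist sequences $\eps_k\searrow 0$, $C_k\to\infty$ and $u_k\in L^2(\Omega)$ with $\|u_k\|=1$, $\gamma\mathcal{E}_{\eps_k}(u_k)<1$ and $\|u_k\|_{\ast}\to 0$. The uniform reverse BBM bound applied to $u_k$ itself then gives $\|\Grad u_k\|^2\le(1+\gamma)\mathcal{E}_{\eps_k}(u_k)+C_\gamma\|u_k\|^2\le C$, hence $\{u_k\}$ is bounded in $H^1(\Omega)$. Rellich--Kondrachov produces a subsequence converging strongly in $L^2(\Omega)$ to some $u_\infty$ with $\|u_\infty\|=1$. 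Since the embedding $L^2_{(0)}(\Omega)\hookrightarrow H^{-1}_{(0)}(\Omega)$ is continuous, the convergence $\|u_k\|_{\ast}\to 0$ forces $u_\infty\equiv 0$, a contradiction.

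The main obstacle is establishing the uniform reverse Ponce inequality with constants independent of $u$: on the torus this follows from Fourier analysis, while on a bounded $C^3$-domain it typically requires an extension/reflection argument or a careful analysis of the kernel $J_\eps$ near $\partial\Omega$, exploiting the additional $W^{1,1}$-regularity imposed in \ref{ASS:JEPS}. Once this uniform reverse estimate is at hand, both parts follow from the arguments just sketched.
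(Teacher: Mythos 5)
The paper does not prove this lemma; it cites \cite[Lemma~C.3]{EJ}, so your attempt must be judged on its own merits. The compactness idea behind your proof of part~(b) is the right one and is essentially what the cited proof rests on, but both parts of your argument have concrete gaps.

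For part~(a), your reduction ``whenever $u\in H^1\setminus H^2$ the right-hand side is $+\infty$ and the inequality is vacuous'' is false. Since $J_\eps\in L^1$ by \ref{ASS:JEPS}, Young's convolution inequality gives
\begin{equation*}
\mathcal{E}_\eps(\Grad u)\;=\;\frac14\intO\intO J_\eps(x-y)\,\bigl|\Grad u(x)-\Grad u(y)\bigr|^2\dy\dx
\;\le\;\|J_\eps\|_{L^1}\,\|\Grad u\|^2\;<\;\infty
\end{equation*}
for every $u\in H^1(\Omega)$, with no $H^2$-regularity needed. So the estimate is a genuine constraint on all of $H^1(\Omega)$, and you cannot restrict attention to $u\in H^2(\Omega)$. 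Moreover, even for $u\in H^2(\Omega)$, your chain (apply a reverse inequality componentwise to $\partial_i u$, then interpolate) produces a bound of the form $\|\Grad u\|^2\le C\,\mathcal{E}_\eps(\Grad u)+C\|u\|^2$ with an unspecified constant $C$ in front of the nonlocal energy, whereas \eqref{Poinc} requires control of the coefficient; this step needs to be made quantitative, and it is not clear it yields the stated form.

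For part~(b), the contradiction argument is the right strategy, but the invocation of the reverse BBM inequality followed by Rellich--Kondrachov does not go through as written. Your contradicting sequence $(u_k)$ lives only in $L^2(\Omega)$, so the reverse BBM inequality (which requires $v\in H^1(\Omega)$) cannot be applied to $u_k$, and hence you have no $H^1$-bound to feed into Rellich--Kondrachov. The tool you actually need here is the \emph{nonlocal} compactness theorem of Bourgain--Brezis--Mironescu/Ponce: if $\eps_k\searrow 0$, $\sup_k\|u_k\|_{L^2}<\infty$ and $\sup_k\mathcal{E}_{\eps_k}(u_k)<\infty$, then $(u_k)$ is relatively compact in $L^2(\Omega)$ and every $L^2$-limit belongs to $H^1(\Omega)$. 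With this replacement the rest of your argument for (b) — passing to a strong $L^2$ limit $u_\infty$ with $\|u_\infty\|=1$ while $\|u_k\|_\ast\to 0$ forces $u_\infty\equiv 0$ — is sound, modulo the minor point that $\|\cdot\|_\ast$ is a norm only on the mean-zero subspace, so one should interpret the statement (as the paper does in its applications) with $u$ replaced by $u-\ov u$.

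Finally, the ``uniform reverse BBM with constant $1+\gamma$'' that you posit as your central ingredient is itself a nontrivial claim — certainly not a one-line consequence of the $\Gamma$-liminf inequality — and in the bounded-domain case you acknowledge it ``typically requires an extension/reflection argument.'' As stated, your plan would need this claim to be proved before it can be used, which is a substantial part of the work you set out to do; the cited \cite{EJ} establishes precisely such estimates.
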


\medskip

\noindent For a proof of this lemma we refer to \cite[Lemma C.3]{EJ}. 

\subsubsection{Nonlocal to local convergence for the operator \texorpdfstring{$\mathcal{L}_\eps$}{}}
The nonlocal-to-local convergence $\mathcal L_\eps \to \mathcal L_0 = -\Lap$ as $\eps\to 0$ along with certain convergence rates has already been investigated in \cite{abels2023strong}, either for $\Omega = \R^n$ or for $\Omega$ being a bounded domain. Therefore, the following results are already known or can easily be obtained from those in \cite{abels2023strong}. 

\pagebreak[3]

\begin{proposition} \label{PROP:NLTL}
Suppose that \ref{ASS:GEN}--\ref{ASS:JEPS} hold, and for $\eps\ge 0$, let $\mathcal{L}_\eps$ be given by \eqref{DEF:NL:INTRO}.
\begin{enumerate}[label=\textnormal{(\alph*)},leftmargin=*]
    \item\label{PROP:NLTL:A} If $\Omega\subset\mathbb{R}^n$ is a bounded domain with $C^3$-boundary, 
    there exists a constant $K>0$ such that for all
    $c\in H^3(\Omega)$ with $\partial_{\mathbf{n}}c = 0$ on $\partial\Omega$ and all $\eps>0$, it holds
	\begin{align*}
			\big\|\mathcal{L}_\eps c - \mathcal{L}_0 c\big\| \leq K\sqrt{\eps}\|c\|_{H^3(\Omega)}.
	\end{align*}
    \item\label{PROP:NLTL:B} If $\Omega = \mathbb{T}^n$, there exists a constant $K>0$ such that for all $c\in H^3(\Omega)$ and all $\eps>0$, it holds
    \begin{align*}
        \big\|\mathcal{L}_\eps c - \mathcal{L}_0 c\big\|
        \leq K\eps\|c\|_{H^3(\Omega)}.
    \end{align*}
    \end{enumerate}
\end{proposition}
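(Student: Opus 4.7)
The plan is to follow the scheme of \cite{abels2023strong} and derive both bounds from a second-order Taylor expansion of $c$. For a fixed $x\in\Omega$ and $z=y-x$, I would write
\begin{equation*}
c(y) - c(x) = \nabla c(x)\cdot z + \tfrac{1}{2}\, z^{T} \nabla^{2} c(x)\,z + R_{3}(x,z),
\qquad
R_{3}(x,z) = \int_{0}^{1} \tfrac{(1-t)^{2}}{2}\, D^{3} c(x+tz)[z,z,z]\,\dd t,
\end{equation*}
and substitute into $\mathcal L_\eps c(x) = \int_{\Omega} J_\eps(|x-y|)\,(c(x)-c(y))\,\dd y$. Two facts drive the argument. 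First, because $J_\eps(|\cdot|)$ is even, the first-order contribution $\int J_\eps(|z|)\,\nabla c(x)\cdot z\,\dd z$ vanishes whenever $z$ ranges over a set symmetric about the origin. Second, the normalization $\int_{0}^{\infty} \rho_\eps(r)\,r^{n-1}\,\dd r = 2/C_{n}$ from \ref{ASS:JEPS}, combined with $\int_{\mathbb S^{n-1}} \sigma_{i}\sigma_{j}\,\dd\mathcal H^{n-1}(\sigma) = C_{n}\delta_{ij}$, yields $\int_{\mathbb R^{n}} J_\eps(|z|)\, z_{i}z_{j}\,\dd z = 2\delta_{ij}$. Consequently the second-order contribution produces precisely $-\Delta c(x) = \mathcal L_{0} c(x)$, and the whole difference reduces to the remainder term.

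By Minkowski's integral inequality,
\begin{equation*}
\bignorm{\int J_\eps(|z|)\,R_{3}(\cdot,z)\,\dd z}_{L^{2}(\Omega)}
\le \|D^{3} c\|_{L^{2}(\Omega)} \cdot \int_{\mathbb R^{n}} J_\eps(|z|)\,|z|^{3}\,\dd z,
\end{equation*}
and a polar-coordinates computation gives $\int J_\eps(|z|)\,|z|^{3}\,\dd z = |\mathbb S^{n-1}| \int_{0}^{\infty} \rho_\eps(r)\,r^{n}\,\dd r$, which is of order $\eps$ under the scaling of $\rho_\eps$ used in \cite{abels2023strong}. In the torus case~\textup{(b)}, the assumption that $\rho_\eps$ is compactly supported in $[0,\pi)$ ensures that after substituting $z=y-x$ the variable effectively ranges over a set symmetric about the origin, so the first-order cancellation is global and the rate $\eps$ follows immediately from the remainder estimate.

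The bounded-domain case~\textup{(a)} needs a supplementary boundary analysis, which I expect to be the main obstacle. For $x$ at distance larger than the effective support radius of $J_\eps$ from $\partial\Omega$, the argument above applies verbatim. For $x$ inside the thin strip $\{x\in\Omega : \dist(x,\partial\Omega)\le K\eps\}$, however, the first-order symmetric cancellation fails and produces a boundary defect. Following \cite{abels2023strong}, the plan is to extend $c$ across $\partial\Omega$ by even reflection in a tubular neighborhood, exploiting the Neumann condition $\deln c = 0$ together with the $C^{3}$-regularity of $\partial\Omega$ to preserve the local $H^{3}$-regularity, and then compare $\mathcal L_\eps c$ with the corresponding nonlocal operator on $\mathbb R^{n}$ applied to this extension. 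The residual defect is supported on a strip of measure $O(\eps)$, so the $L^{2}$-norm over this strip loses only a factor $\sqrt\eps$ relative to the interior rate $\eps$; this accounts for the weaker exponent in~(a).

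The hard part is precisely the quantitative treatment of this boundary layer: one must construct an extension of $c$ that is compatible with both the Neumann condition and the $H^{3}$-regularity, and track the partial cancellations finely enough that the defect is genuinely of order $\sqrt\eps\,\|c\|_{H^{3}(\Omega)}$ and not something larger. All of this is carried out in \cite{abels2023strong}; in practice I would invoke the corresponding estimate from that paper and verify that the assumptions \ref{ASS:GEN}--\ref{ASS:JEPS} match the hypotheses made there.
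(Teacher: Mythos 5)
Your part~(a) is essentially the paper's argument: after sketching the boundary-layer difficulty you fall back to invoking the corresponding estimate from \cite{abels2023strong} (the paper cites \cite[Theorem~4.1, Corollary~4.2]{abels2023strong} directly), so there is nothing to compare there beyond the fact that the paper does not spell out the reflection argument at all.

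For part~(b), however, your route is genuinely different from the paper's. The paper establishes the torus estimate by mimicking \cite[Lemma~3.1]{abels2023strong}, i.e.\ by passing to Fourier \emph{series}: one writes $\widehat{\mathcal L_\eps c}(k) = \big(\widehat{J_\eps}(0)-\widehat{J_\eps}(k)\big)\widehat c(k)$ and $\widehat{\mathcal L_0 c}(k)=|k|^2\widehat c(k)$, and then bounds $\big|\widehat{J_\eps}(0)-\widehat{J_\eps}(k)-|k|^2\big|$ by $C\eps|k|^3$ uniformly in $k$. You instead do a second-order Taylor expansion in physical space with the integral remainder $R_3$, exploit the symmetry of the support of $J_\eps$ on $\TTn$ to kill the first-order term, use $\int J_\eps(|z|)z_iz_j\,\dd z=2\delta_{ij}$ to reproduce $-\Delta c$, and control the remainder via Minkowski's integral inequality. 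Both arguments are correct and actually require the same quantitative input: a bound of the form $\int J_\eps(|z|)\,|z|^3\,\dd z \lesssim \eps$, i.e.\ a third-moment estimate on the kernel. (Your polar-coordinate identity $\int J_\eps(|z|)|z|^3\,\dd z = |\mathbb S^{n-1}|\int_0^\infty \rho_\eps(r)r^n\,\dd r$ makes this explicit; in the Fourier argument the same third moment shows up when one bounds $|1-\cos t - t^2/2|\le C|t|^3$ inside $\widehat{J_\eps}(0)-\widehat{J_\eps}(k)$.) This moment condition is not literally contained in \ref{ASS:JEPS} as stated, so like the paper you are implicitly leaning on the specific kernel class from \cite{abels2023strong} referenced at the end of \ref{ASS:JEPS}; it would be worth stating the moment hypothesis explicitly wherever it is used. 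The physical-space Taylor approach is arguably more elementary and makes the role of the moment conditions transparent; the Fourier-series approach, which the paper follows, is closer to the cited source and extends without change to quantify higher-order error terms.

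One small wording issue in your part~(a) sketch: the remark that the boundary strip ``loses only a factor $\sqrt\eps$ relative to the interior rate $\eps$'' is ambiguous. The intended arithmetic is that the interior contributes $O(\eps)$ while the boundary strip, of thickness $O(\eps)$ but with only $O(1)$ pointwise control, contributes $O(\sqrt\eps)$ to the $L^2$-norm, and the latter dominates; as written it reads as if the total were $\eps\cdot\sqrt\eps$. Since you defer to \cite{abels2023strong} anyway this does not affect correctness, but the phrasing should be fixed if the argument is to be made self-contained.
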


\begin{proof}
Part \ref{PROP:NLTL:A} has already been proven in \cite[Theorem~4.1 and Corollary~4.2]{abels2023strong}. Part \ref{PROP:NLTL:B} can be established similarly as \cite[Lemma~3.1]{abels2023strong}, which is the corresponding result for $\Omega=\R^n$, by using Fourier series instead of Fourier transformation.
\end{proof}

\subsubsection{Existence and uniqueness of weak and strong solutions to the local Model~H}

For the local Model~H (i.e., system \eqref{ModelH} with $\eps=0$), there already exists an extensive literature. In the following proposition, we collect the most important results concerning weak and strong well-posedness as well as separation properties. 

\begin{proposition} \label{PROP:WP:LOCAL}
Suppose that the assumptions \ref{ASS:GEN}--\ref{ASS:JEPS} and \ref{ASS:S1} hold.
We prescribe initial data $v_{0}\in \LL_{\sigma }^{2}(\Omega )$ and $c_0\in L^{\infty }(\Omega )\cap H^1(\Omega)$
with $\Vert c_0\Vert_{L^\infty(\Omega)}\leq 1$ and $|\overline{c _{0}}|<1$. 
Then there exists a global weak solution 
$$(\vv,c,\mu):\Omega\times [0,\infty) \to \R^n\times\R\times\R$$
to \eqref{ModelH} with $\eps=0$ with the following properties:
%%%
\begin{enumerate}[label=\textnormal{(\roman*)}, topsep=0em, partopsep=0em,leftmargin=*]
\item\label{E1loc} For any $T>0$, it holds
\begin{equation}
\begin{cases}
\vv\in C_{\mathrm{w}}([0,T];\LL_{\sigma }^{2}(\Omega ))\cap
L^{2}(0,T;\Vsigma), \\
c \in L^{\infty }(\Omega \times (0,T))\cap L^{4}(0,T;H^{2}(\Omega ))\text{
with }|c |<1\ \text{a.e. in }\Omega_T, \\
\delt \vv \in L^{\frac4n%
}(0,T;\Vsigma'),\\
 \delt c \in
L^{2}(0,T;H^{1}(\Omega )^{\prime }), \\
\mu \in L^{2}(0,T;H^{1}(\Omega )).%
\end{cases}
\label{regg-weakloc}
\end{equation}
%%%
\item For any $T>0$, the triplet $(\vv,c,\mu)$ fulfills the equations \eqref{ModelH:NS}--\eqref{ModelH:CH2} with $\eps=0$ in the weak sense, whereas the initial conditions \eqref{ModelH:IC} are fulfilled a.e.~in $\Omega$.
If $\Omega$ is a bounded domain, it further holds $\vv=\mathbf{0}$ and $\partial_\mathbf{n}c=0$ a.e.~on $\Gamma_T$. 
\end{enumerate}  
%%%
If $n=2$, the weak solution is unique.

Now, we additionally assume $\vv_{0}\in \Vsigma$, $c_0\in H^2(\Omega)$ and $\mu_0:=-\Lap c_0 + f'(c_0) \in H^1(\Omega)$.
If $\Omega$ is a bounded domain, we further assume $\partial_\mathbf{n}c_0=0$ a.e.~on $\Gamma$.
Then, there exists a unique right-maximal strong solution 
$$(\vv,p,c,\mu):\Omega\times [0,T_{\star}) \to \R^n\times\R\times\R\times\R$$
of system \eqref{ModelH} with $\eps=0$. If $n=2$, it holds $T_{\star}=\infty$.
This strong solution has the following properties:
%%%
\begin{enumerate}[label=\textnormal{(\roman*)}, topsep=0em, partopsep=0em, leftmargin=*, start=3]
\item \label{K1loc} It holds
\begin{align}
\left\{
\begin{aligned}
&\vv\in BC([0,T_{\star});\Vsigma)
\cap L_{\mathrm{uloc}}^{2}([0,T_{\star} );\HH^2(\Omega )\cap \Vsigma)
\\
&\qquad
\cap H_{\mathrm{uloc}}^{1}([0,T_{\star} );\LL_{\sigma }^{2}(\Omega )), 
\\
&p \in L_{\mathrm{uloc}}^{2}([0,T_{\star} );H_{(0)}^{1}(\Omega )), 
\\
&c \in L^{\infty }(0,T_{\star} ;L^{\infty }(\Omega ))\cap BC_{\mathrm{w}}([0,T_{\star} );{W^{2,p}}(\Omega )),
\\
& |c(x,t)|<1 \;\;\text{for almost all $x\in \Omega$ and all $t\in [0,T_{\star})$},
\\
&\partial_{t}c \in L^{\infty }(0,T_{\star} ;H^{1}(\Omega )^{\prime })\cap
L^{2}_{\mathrm{uloc}}(0,T_{\star} ;H^1(\Omega )),
\\ 
&F^{\prime }(c )\in L^{\infty
}(0,T_{\star} ;L^p(\Omega )),
\\
&\mu \in L^\infty(0,T_{\star};H^{1}(\Omega ))\cap L_{\mathrm{uloc}%
}^{2}([0,T_{\star} );H^{3}(\Omega )).%
\end{aligned}
\right.
\label{reggloc}
\end{align}
for all $p\in[2,\infty)$ if $n=2$ and all $p\in[2,6]$ if $n=3$.

%%%
\item \label{K2loc} The quadruplet $(\vv,p,c,\mu)$ fulfills the equations
\eqref{ModelH:NS}--\eqref{ModelH:CH2} a.e.~in $\Omega\times [0,T_{\star})$
and the initial condition \eqref{ModelH:IC} a.e.~in $\Omega$. 
If $\Omega$ is a bounded domain, it further holds $\vv=\mathbf{0}$ and $\partial_{\mathbf{n}}c=\partial_{\mathbf n}\mu=0$ a.e.~on $\Gamma \times (0,T_{\star})$.
%%%
\item \label{K3loc} If $n=2$ and assumption \ref{ASS:S2} additionally holds, there
exists $\delta_\star>0$ such that the strict separation property
\begin{equation}  \label{SP-Starloc}
\sup_{t\in \lbrack 0,\infty)}\Vert c (t)\Vert _{L^{\infty }(\Omega
)}\leq 1-\delta_\star
\end{equation}%
is fulfilled.
In particular, this entails
\begin{align}
c\in L^\infty(0,T;H^3(\Omega)) \quad\text{for all $T>0$}.
\label{regularity_c}
\end{align}
If $n=3$ and $\Vert c_0\Vert_{L^\infty(\Omega)}\leq 1-\delta_0$ holds for some $\delta_0\in(0,1)$,
there exist $0<T_0<T_{\star}$  such that the strict separation property
\begin{equation}  \label{SP-Starloc2}
\sup_{t\in \lbrack 0,T_0]}\Vert c (t)\Vert _{L^{\infty }(\Omega
)}\leq 1-\frac{\delta_0}{2}
\end{equation}
is fulfilled.
In particular, this entails
\begin{align}
c\in L^\infty(0,T_0;H^3(\Omega)).
\label{regularity_c2}
\end{align}
\end{enumerate}
%%%
\end{proposition}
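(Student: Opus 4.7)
The proposition is essentially a compilation of results already available in the literature, so the plan is to combine and adapt them under the unified hypotheses \ref{ASS:GEN}--\ref{ASS:JEPS} and \ref{ASS:S1}, rather than to reprove everything from scratch. For the existence of global weak solutions in items \ref{E1loc}--(ii), I would follow the Galerkin/regularization scheme of \cite{Abels2009, Boyer, GMT2019}: approximate the singular potential $F$ by its Moreau--Yosida regularization $F_\lambda$, construct smooth approximate solutions via Faedo--Galerkin with bases of eigenfunctions of the Stokes operator and of the Neumann Laplacian, derive a priori bounds from the energy identity \eqref{energy}, and pass to the limit $\lambda\to 0$ by Aubin--Lions compactness. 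The higher regularity $c\in L^{4}(0,T;H^{2}(\Omega))$ and $\mu\in L^{2}(0,T;H^{1}(\Omega))$ is obtained by testing \eqref{ModelH:CH2} against $-\Lap c$ and exploiting the convexity of $F$; $\delt\vv$ and $\delt c$ then follow by duality from the equations. Uniqueness of weak solutions in $n=2$ is the classical pairwise energy estimate, in which the Cahn--Hilliard difference is tested in $H^{1}(\Omega)'$ via $\mathcal{N}$ and Ladyzhenskaya's 2D inequality absorbs the convective terms.

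For the strong solutions in items \ref{K1loc}--\ref{K2loc}, the plan is to bootstrap by higher-order estimates in the spirit of \cite{Abels2009, GGM2016, Giorgini2023}: test \eqref{ModelH:NS} against $A_{S}\vv$, differentiate \eqref{ModelH:CH1}--\eqref{ModelH:CH2} in time to control $\delt c\in L^{2}(H^{1}(\Omega))$ and $\mu\in L^{\infty}(H^{1}(\Omega))$, and apply elliptic regularity to $-\Lap c = \mu - f'(c)$ to obtain $c\in L^{\infty}(W^{2,p}(\Omega))$ in the claimed range of $p$. The nonlinear terms $(\vv\cdot\Grad)\vv$ and $\vv\cdot\Grad c$ are handled by Gagliardo--Nirenberg inequalities; the resulting Gronwall inequality closes globally in $n=2$ and only locally in $n=3$, which determines the right-maximal interval $[0,T_{\star})$. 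The sharper regularity $\mu\in L^{2}_{\mathrm{uloc}}(0,T_{\star};H^{3}(\Omega))$ is then obtained by a further time-differentiation of the Cahn--Hilliard system, combined with elliptic regularity for $\mu$.

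For the separation property in item \ref{K3loc}, in $n=2$ I would invoke the Moser-iteration technique of Miranville--Zelik \cite{MZ}, refined under \ref{ASS:S2} in \cite{GGM, GP}, using the uniform $L^{\infty}(0,\infty;L^{p}(\Omega))$ bound on $F'(c)$ from the strong-solution estimates to conclude the uniform-in-time bound \eqref{SP-Starloc}; the regularity \eqref{regularity_c} then follows from elliptic regularity for $-\Lap c = \mu - f'(c)$, now with $f'(c)$ a smooth function of $c$. In $n=3$, the assumption $\|c_{0}\|_{L^{\infty}(\Omega)}\le 1-\delta_{0}$ combined with the continuity $c\in BC_\mathrm{w}([0,T_{\star});W^{2,p}(\Omega))\hookrightarrow C([0,T_{\star});C(\ov\Omega))$ (valid for $p>n/2$) produces \eqref{SP-Starloc2} on a sufficiently short interval $[0,T_{0}]$, whence \eqref{regularity_c2} follows as in the 2D case.

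The principal technical obstacle, common to all steps, is the interplay between the singular potential and the Navier--Stokes coupling. Controlling $F'(c)$ in $L^{p}$-norms requires $c$ to remain strictly inside $(-1,1)$, but in $n=3$ this strict separation is only available on a short interval that itself depends on the strong-solution estimates, which in turn depend back on the singular nonlinearity. The 3D existence and the 3D separation argument must therefore be carried out in parallel on the same time interval, with $T_{0}$ chosen small enough to simultaneously close the strong a priori bounds and preserve $1-|c|\ge \tfrac{\delta_{0}}{2}$. Once this circular point is resolved, the remaining statements reduce to a careful bookkeeping of the cited results within the unified framework adopted here.
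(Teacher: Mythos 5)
Your proposal follows essentially the same route as the paper: existence and uniqueness of weak and strong solutions are delegated to the literature (Abels \cite{Abels2009}, Giorgini--Miranville--Temam \cite{GMT2019}, Giorgini \cite{Giorgini2021}), and the separation properties are established via iteration techniques in $n=2$ and a continuity argument on a short time interval in $n=3$. Two implementation details are worth flagging. First, in $n=2$ the iteration argument in \cite{GP} needs not merely $F'(c)\in L^\infty(0,\infty;L^p(\Omega))$ for each $p$, but the explicit sublinear growth rate $\sup_{t\ge 0}\|F'(c(t))\|_{L^p(\Omega)}\le C\sqrt{p}$ (derived from a Trudinger-type inequality); this rate is what makes the De Giorgi iteration close, so it should be stated rather than left implicit. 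Second, for $n=3$ you use the embedding $BC_\mathrm{w}([0,T_\star);W^{2,p}(\Omega))\hookrightarrow C([0,T_\star);C(\ov\Omega))$ for $p>n/2$ via compactness of the Rellich embedding, which gives qualitative strong continuity; the paper instead interpolates the Lipschitz bound $c\in C^{0,1}([0,T_\star);H^1(\Omega)')$ (from $\partial_t c\in L^\infty(H^1(\Omega)')$) against the $L^\infty(H^2(\Omega))$ bound to get $c\in C^{0,\beta}([0,T_\star);L^\infty(\Omega))$, which yields the explicit choice $T_0=\min\{(\delta_0/2)^{1/\beta},T_\star\}$. Both give the required $T_0>0$, so the difference is cosmetic.

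One point where your intuition should be corrected: the circularity you describe in the last paragraph does not occur. The 3D strong well-posedness on $[0,T_\star)$ is obtained from \cite{GMT2019} without assuming strict separation; the $L^p$ control of $F'(c)$ used there is extracted by testing the chemical-potential equation directly, not from a prior $L^\infty$-separation estimate. Strict separation on $[0,T_0]\subset[0,T_\star)$ is then a downstream consequence of the already established strong-solution regularities, not something that must be constructed in parallel on the same interval. The argument is strictly sequential, and no bootstrap with a shrinking interval is needed.
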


\medskip

\begin{remark}\label{REM:WP:LOCAL}
\begin{enumerate}[label=\textnormal{(\alph*)},leftmargin=*]
    %%%
    \item\label{REM:WP:LOCAL:SEP}
    To obtain the strict separation properties \eqref{SP-Starloc} and \eqref{SP-Starloc2} on an interval including the initial time, it is crucial that the initial datum $c_0$ is already strictly separated (i.e., $\Vert c_0\Vert_{L^\infty(\Omega)}\leq 1-\delta_0$ for some $\delta_0\in(0,1)$). In the case $n=2$, this already follows from the assumption $\mu_0=-\Lap c_0 + f'(c_0) \in H^1(\Omega)$ by means of De Giorgi iterations as employed in \cite[Theorem 4.3]{GP}.
    
    In the case $n=3$, at least up to now, the separation property \eqref{SP-Starloc2} can merely be obtained on a local neighborhood of the initial time.  
    For this result, it is sufficient to assume that the potential $f$ satisfies \ref{ASS:S1}.       
    If $n=2$, assuming both \ref{ASS:S1} and \ref{ASS:S2}, even a strict separation property on the entire interval $[0,\infty)$ can be established.     
    The question, whether this property can also be proven for $n=3$ is a challenging open problem.
    As shown in \cite[Section 6.1.1]{GP}, a strict separation property on the entire right-maximal interval $[0,T_{\star})$ can also be obtained in the case $n=3$ if slightly more singular potentials $f$ than the Flory--Huggins potential (see~Remark \ref{REM:LOG}) are used.
    
    The strict separation properties \eqref{SP-Starloc} and \eqref{SP-Starloc2} will be an essential ingredient in the proof of Theorem~\ref{THM:NLTL}.   
    %%%
    \item\label{REM:WP:LOCAL:SMALL}
    As pointed out in Proposition~\ref{PROP:WP:LOCAL}, the unique strong solution exists globally in time (i.e, $T_{\star}=\infty$) if $n=2$. In the case $n=3$, due to the involved Navier--Stokes equation, only local existence of the strong solution (i.e, $T_{\star}<\infty$) for general initial data is known so far. 
    However, if the initial data are sufficiently close to a stationary point (i.e., a minimizer of the total energy), the global existence of the strong solution can still be ensured.
    If we additionally assume $c_0$ to be strictly separated, up to reducing the size of some norms of the initial data, the strict separation property \eqref{SP-Starloc2} can also be established globally in time (see \cite[Theorem 6.4]{GP}). 
    %%%
    \item\label{REM:WP:LOCAL:TORUS}
    We point out that most of the results in the literature concerning the local version of the Model~H consider the case of bounded domains. However, it is clear that these results can usually be transferred to the case $\Omega=\mathbb{T}^n$ by slightly adapting the arguments. 
    
    Note that, if $\Omega=\TTn$, the assumption $\vv_0\in \LL^2_\sigma(\Omega)$ already includes the condition 
    $\ov{\vv_0}=\mathbf{0}$. This then implies $\int_\Omega \vv(t)\dx=\int_\Omega\vv_0\dx=0$ (cf.~\eqref{avg}) and therefore, we may apply the inverse Stokes operator $A_S^{-1}$ (see~\ref{PRE:STOKES}) directly on $\vv(t)$ for every $t\ge 0$ for which the solution exists.
    
    However, the assumption $\vv_0\in \LL^2_\sigma(\Omega)$ really does not mean any loss of generality as we could simply consider the difference $\vv - \ov{\vv}$ instead of $\vv$, which would not have a major impact on our mathematical analysis.
\end{enumerate}
\end{remark}

\begin{proof}[Proof of Theorem~\ref{PROP:WP:LOCAL}]
    If $\Omega$ is a bounded domain, the existence of a weak solution was established in \cite[Theorem~1]{Abels2009}. In the case $n=2$, the uniqueness of the weak solution was shown, e.g., in \cite{GMT2019}.
    Concerning the assertions on strong well-posedness we refer to \cite[Sections 4-5]{GMT2019}. 
    Note that the compatibility condition $\mu_0=-\Lap c_0 + f'(c_0) \in H^1(\Omega)$ is crucial for obtaining strong solutions.

    If $\Omega$ is the torus $\TTn$, the same results can be obtained by adapting the arguments in the aforementioned literature to the periodic setting. For instance, in the case $n=2$, the existence of a unique global strong solution was established in \cite{Giorgini2021}.
    
    In the case $n=2$, the strict separation property \eqref{SP-Starloc} can be established by following the line of argument in \cite[Theorem 3.3]{GP}, which is based on De Giorgi iterations. A crucial ingredient in this proof is the estimate
    \begin{align}
    \sup_{t\geq0}\Vert F'(c(t))\Vert_{L^p(\Omega)}\leq C\sqrt{p},\quad \text{for all $p\in[2,\infty)$}.
    \label{Flpa}
    \end{align}
    It can be derived by means of a Gagliardo--Nirenberg type estimate, which can be found, e.g., in \cite[p.~479]{Trudinger}. For more details, we refer to the derivation of \eqref{FLp} below, which is obtained by similar computations.  
    Once \eqref{Flpa} is established, one can proceed as in \cite[Theorem 3.3]{GP} to deduce the strict separation property \eqref{SP-Starloc} in the case $n=2$.     
    In this context, we recall that, as pointed out in Remark~\ref{REM:WP:LOCAL}\ref{REM:WP:LOCAL:SEP}, the assumption $\mu_0=-\Lap c_0 + f'(c_0) \in H^1(\Omega)$ already entails that the initial datum $c_0$ is strictly separated.
    As \eqref{SP-Starloc} directly implies $f'(c)\in L^\infty(0,\infty;H^1(\Omega))$, we apply elliptic regularity theory to the equation $-\Delta c=\mu-f'(c)$ in $\Omega\times (0,\infty)$ to conclude \eqref{regularity_c}.
    
    In the case $n=3$, the strict separation property \eqref{SP-Starloc2} can be shown similarly as in \cite[Corollary 4.4]{MZ} by means of a continuity argument. In view of the regularities in \eqref{reggloc}, we deduce
    \begin{align*}
        \norm{c(t)-c(s)}_{\ast}
        \leq \left| \int_s^t\norm{\delt c}_{\ast}\,\text{d}\tau \right| 
        \leq C\vert t-s\vert
    \end{align*}
    for all $s,t\in[0,T_{\star})$.
    This entails $c\in C^{0,1}([0,T_{\star});H^1(\Omega)')$. Using once more \eqref{reggloc}, we infer via interpolation that
    \begin{equation*}
    \norm{c(t)-c(s)}_{L^\infty(\Omega)}
    \leq C\norm{c(t)-c(s)}_{\ast}^{\beta}\norm{c(t)-c(s)}_{H^2(\Omega)}^{1-\beta}
    \leq C\vert t-s\vert^\beta
    \end{equation*}
    holds for all $s,t\in[0,T_{\star})$ and some suitably chosen $\beta\in(0,1)$. Therefore, we thus have $c\in C^{0,\beta}([0,T_{\star});L^\infty(\Omega))$, and hence, if there exists $\delta_0>0$ such that $\norm{c_0}_{L^\infty(\Omega)}\leq 1-\delta_0$, then \eqref{SP-Starloc2} holds for $T_0=\min\{(\delta_0/2)^{1/\beta}, T_{\star}\}$.   
    As a direct consequence of \eqref{regularity_c2}, we have $F'(c)\in L^\infty(0,T_0;H^1(\Omega))$. Hence, by applying elliptic regularity theory to the equation $-\Delta c=\mu-f'(c)$ in $\Omega\times (0,T_0)$, we conclude \eqref{regularity_c2}. Thus, the proof is complete.
\end{proof}

% \newpage

\section{Main results} 
\label{SECT:MAIN}

We are now ready to state the main results of the present paper.

\subsection{Existence and uniqueness of weak and strong solutions to the nonlocal Model~H}

This subsection is concerned with the existence and uniqueness of solutions to the nonlocal Model~H (i.e., system \eqref{ModelH} with $\eps>0$).

For $n=2,3$ and any fixed $\eps>0$, the weak well-posedness of the nonlocal Model~H has already been established in \cite[Theorem 1]{Frigeri2012a}. Furthermore, in the case $n=2$, the strong well-posedness theory of this Model~H has been developed in \cite{GGG2017} and, more in details, in \cite[Theorem 1.5, Theorem 1.9]{AGGP}, which even deals with the more general case of unmatched densities (i.e., $\rho$ is not constant and depends on the phase-field). Again, all these results are obtained for bounded domains, but as in the local case, they can be easily adapted to the case $\Omega=\TTn$.

In our first main result Theorem~\ref{THM:WP:NONLOCAL}, we show the existence of a unique local-in-time strong solution to the nonlocal Model~H (for any $\eps>0$) also in the case $n=3$. Moreover, for $n=2,3$, we are able to bound weak and strong solutions in suitable norms by a constant independent of $\eps$, at least provided that the considered $\eps$ is sufficiently small. 
These uniform estimates will be in essential ingredient in the nonlocal-to-local convergence of the Model~H.

\begin{theorem}\label{THM:WP:NONLOCAL}
Let $\eps>0$ and suppose that the assumptions \ref{ASS:GEN}--\ref{ASS:JEPS} and \ref{ASS:S1} hold.  
We prescribe initial data $\vv_{\eps,0}\in \LL_{\sigma }^{2}(\Omega )$  and $c _{\eps,0}\in L^{\infty }(\Omega )$ with $|\overline{c_{\eps,0}}|<1$.
% Without loss of generality, $\overline{\vv}_{\eps,0}=\mathbf0$ if $\Omega=\TTn$.
We further assume that there exists a constant $C_0>0$ independent of $\eps$ such that 
\begin{equation}
    \label{assumptionE}
    E_\eps(\vv_{\eps,0},c_{\eps,0})\leq C_0,
\end{equation}
where $E_\eps$ is the energy functional defined in \eqref{DEF:EN}.
Then there exists a global weak solution 
$$(\vveps,\ceps,\mueps):\Omega\times [0,\infty) \to \R^n\times\R\times\R$$
to the nonlocal Model~H \eqref{ModelH} associated with $\eps$, and for any $T>0$, the following properties hold.
%%%
\begin{enumerate}[label=\textnormal{(\roman*)}, topsep=0em, partopsep=0em, leftmargin=*]
%%%
\item \label{E1} It holds
\begin{align}
&\begin{cases}
\vveps\in C_{\mathrm{w}}([0,T];\LL_{\sigma }^{2}(\Omega ))\cap
L^{2}(0,T;\Vsigma), 
\\
\delt \vveps \in L^{\frac4n%
}(0,T;(\Vsigma)^{\prime }),
\\
\ceps \in L^{\infty}(0,T;L^\infty(\Omega)) 
\text{ with }|\ceps |<1\ \text{a.e. in }\Omega \times (0,T), 
\\
\ceps \in L^2(0,T;H^1(\Omega)) \quad\text{if $\Omega=\TTn$,}
\\
\delt \ceps \in
L^{2}(0,T;H^{1}(\Omega )^{\prime }), 
\\
\mueps \in L^{2}(0,T;H^{1}(\Omega )).
\end{cases}
\label{regg-weak}
\end{align}%
%%%
\item \label{E2} The triplet $(\vveps,\ceps,\mueps)$ fulfills the equations \eqref{ModelH:NS}--\eqref{ModelH:CH2} 
in weak sense and the initial conditions $\vveps(\cdot ,0)=\mathbf{%
v}_{\eps,0}$ and $\ceps (\cdot ,0)=c_{\eps,0}$ hold in $\Omega $.
%%%
\item \label{E3}
There exists a constant $C_1(T)>0$ such that
\begin{align}
    \label{regg-weak-est1}
    \begin{aligned}
    &\|\vveps\|_{L^{2}(0,T;\Vsigma)}
    + \|\delt \vveps\|_{L^{4/n}(0,T;\Vsigma')}
    \\
    &\quad + \|\ceps \|_{L^{\infty }(\Omega \times (0,T))}
    + \|\delt \ceps \|_{L^2(0,T;H^1(\Omega)')}
    + \|\mueps \|_{L^2(0,T;H^1(\Omega))}
    \le C_1(T).
    \end{aligned}
\end{align}
There further exist $\eps_{w}=\eps_{w}(\theta_0)>0$ and a constant $C_2(T)>0$ such that 
\begin{align}
    \label{regg-weak-est2}
    \|\ceps\|_{L^2(0,T;H^1(\Omega))} \le C_2(T)
    \quad\text{if $\Omega=\TTn$ and $\eps\in(0,\eps_w]$.}
\end{align}
\end{enumerate} 

Now, we additionally assume $\vv_{\eps,0}\in \Vsigma$, $c _{\eps,0}\in
H^{1}(\Omega )$, with $|\overline{c _{\eps,0}}|<1$, $F^{\prime }(c _{\eps, 0})\in
L^{2}(\Omega )$ and $F^{\prime \prime }(c _{\eps,0})\Grad c _{\eps,0}\in
\LL^{2}(\Omega)$. We further demand that there exists a constant $C_0>0$ independent of $\eps$ such that 
\begin{align}
\norm{D\vv_{\eps,0}}+\norm{\Grad\mu_{\eps,0}}\leq C_0,
\label{assumption0}
\end{align}
where $\mu_{\eps,0}:=\mathcal{L}_\eps c_{\eps,0}+f'(c_{\eps,0})$.
Then, there exists a unique right-maximal strong solution 
$$(\vveps,p_\eps,\ceps,\mueps):\Omega\times [0,T_{\eps,*}) \to \R^n\times\R\times\R\times\R$$
of system \eqref{ModelH} associated with $\eps$.
If $n=2$ it holds $T_{\eps,*}=\infty$.
This strong solution has the following properties:
%%%
\begin{enumerate}[label=\textnormal{(\roman*)}, topsep=0em, partopsep=0em, leftmargin=*, start=4]
\item \label{K1} For any $T\in(0,T_{\eps,*})$, it holds 
\begin{equation}
\begin{cases}
\vveps\in BC([0,T];\Vsigma)\cap L^{2}(0,T;\HH^2(\Omega )\cap \Vsigma)\cap H^{1}(0,T ;\LL_{\sigma }^{2}(\Omega )), 
\\
\peps\in L^{2}(0,T ;H_{(0)}^{1}(\Omega )), 
\\
\ceps \in L^\infty\big(0,T;H^1(\Omega) \cap L^\infty(\Omega)\big) 
\text{ with }|\ceps |<1\ \text{a.e. in }\Omega \times (0,T), 
\\
\delt \ceps \in L^{\infty }(0,T ;H^{1}(\Omega )^{\prime })\cap
L^{2}(0,T ;L^{2}(\Omega )),
\\
F^{\prime }(\ceps )\in L^2(0,T;H^1(\Omega)) \cap L^{\infty}(0,T ;L^{p}(\Omega )),
\\
\mueps \in L^\infty(0,T ;H^{1}(\Omega ))\cap L
^{2}(0,T;H^{2}(\Omega )).%
\end{cases}
\label{regg}
\end{equation} 
for all $p\in[2,\infty)$ if $n=2$ and all $p\in[2,6]$ if $n=3$. 
%%%
\item \label{K2} $(\vveps,\peps,\ceps, \mueps )$ fulfills the equations
\eqref{ModelH:NS}--\eqref{ModelH:CH2}
a.e.~in $\Omega\times [0,T_{\eps,*})$
and the initial condition \eqref{ModelH:IC} a.e.~in $\Omega$. 
If $\Omega$ is a bounded domain, it further holds $\vveps=\mathbf{0}$ and $\partial_{\mathbf n}\mu_\eps=0$ a.e.~on $\Gamma \times (0,T_{\eps,*})$.
%%%
\item \label{K3}
If $n=3$, there exist $\eps_s\in (0,\eps_w]$ and $T_*\in (0,T_{\eps,*})$ independent of $\eps$ 
such that for any $T\in(0,T_*]$, there exist constants $C_3(T),C_4(T)>0$ such that
\begin{align}
    \label{regg-est1}
    \begin{aligned}
    &\|\vveps\|_{L^{\infty}(0,T;\Vsigma)}
    + \|\vveps\|_{L^{2}(0,T;\HH^2(\Omega ))}
    + \|\vveps\|_{H^{1}(0,T ;\LL_{\sigma }^{2}(\Omega))} 
    \\
    &\quad
    + \|\peps\|_{L^2(0,T;H^1(\Omega))}
    + \|\delt \ceps \|_{L^\infty(0,T;H^1(\Omega)')}
    + \|\delt \ceps \|_{L^2(0,T;L^2(\Omega))}
    \\
    &\quad
    + \|F'(\ceps) \|_{L^\infty(0,T;L^p(\Omega))}
    + \|\mueps \|_{L^\infty(0,T;H^1(\Omega))}
    \le C_3(T)
    \qquad\text{if $\eps\in(0,\eps_s]$,}
    \end{aligned}
\end{align}
for all $p\in[2,6]$, and
\begin{equation}
    \label{regg-est2}
    \|\ceps \|_{L^\infty(0,T;H^1(\Omega))}
        + \|\mueps \|_{L^2(0,T;H^2(\Omega))} 
    \le C_4(T) \quad\text{if $\Omega=\TTn$ and $\eps\in(0,\eps_s]$.}
\end{equation}

If $n=2$, there exists $\eps_s\in (0,\eps_w]$ such that \eqref{regg-est1} and \eqref{regg-est2} even hold for every $T>0$ and every $p\in[2,\infty)$.

% If $\Omega=\TTn$, it further holds that
% \begin{equation}
%     \label{regg-est2}
%     \|\ceps\|_{L^2(0,T;H^1(\Omega))} \le C_4(T)
%     \quad\text{if $\eps\in(0,\eps_{\ast})$.}
% \end{equation}
%%%
\item \label{K4} If $n=2$, we now additionally assume that \ref{ASS:S2} holds, and if $n=3$, we additionally assume that \ref{ASS:S3} is fulfilled. Then, for any $\tau >0$, there exists $\delta_{\eps,\tau} \in (0,1)$ such that the strict separation property
\begin{equation}
\sup_{t\in \lbrack \tau ,T_{\eps,*})}\Vert \ceps (t)\Vert _{L^{\infty }(\Omega
)}\leq 1-\delta_{\eps,\tau}  \label{delt}
\end{equation}
holds.
Moreover, if we further assume that 
$\Vert c _{\eps,0}\Vert _{L^{\infty}(\Omega )}\leq 1-\delta _{\eps,0}$ for some $\delta _{\eps,0}\in (0,1)$, then there exists $\delta_{\eps,0}^{\ast}>0$ such that the strict separation property
\begin{equation}  \label{SP-Star}
\sup_{t\in \lbrack 0,T_{\eps,*})}\Vert \ceps (t)\Vert _{L^{\infty }(\Omega
)}\leq 1-\delta_\eps^{\ast}
\end{equation}
holds.
In this case, we further have $\delt \mueps \in L^{2}(0,T
;L^{2}(\Omega ))$ for every $T\in (0,T_{\eps,*})$.
\end{enumerate}
We point out that the constants $C_1(T),...,C_4(T)$ may depend on the choice of $\Omega$, the initial data and the system parameters, but are independent of $\eps$.
\end{theorem}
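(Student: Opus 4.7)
\textbf{Weak solutions and uniform estimates.}
For each fixed $\eps>0$, the existence of a weak solution satisfying \ref{E1}--\ref{E2} is essentially contained in \cite{Frigeri2012a}, and adaptations to $\Omega=\TTn$ require only minor notational changes. The new task is to make the bounds $\eps$-independent, i.e.~\ref{E3}. Integrating the energy identity \eqref{energy} in time and invoking \eqref{assumptionE} together with $F\ge 0$ yields
\begin{equation*}
\tfrac12\|\vveps(t)\|^2 + \mathcal{E}_\eps(\ceps(t)) + \intO f(\ceps(t))\dx + \int_0^t\!\!\big(\|\Grad\vveps\|^2+\|\Grad\mueps\|^2\big)\dtau \le C_0,
\end{equation*}
so $\vveps\in L^2(0,T;\Vsigma)\cap L^\infty(0,T;\LL^2_\sigma)$ and $\Grad\mueps\in L^2(0,T;\LL^2)$ are uniformly bounded. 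The bound on $\ceps$ is immediate from $|\ceps|<1$. The mean $\overline{\mueps}$ is controlled by integrating \eqref{ModelH:CH2} over $\Omega$ (the nonlocal operator has zero mean), and then using the classical inequality $\|F'(\ceps)\|_{L^1}\le C\big(\int_\Omega F(\ceps)\dx + 1\big)$ valid when $|\overline{\ceps}|$ is bounded away from $\pm 1$ (see \cite{MZ}); this, combined with mass conservation \eqref{mass} and Poincar\'e, delivers $\mueps\in L^2(0,T;H^1)$ uniformly. The time-derivative bounds follow by duality from the equations. For the $H^1$-bound \eqref{regg-weak-est2} on the torus, I apply Lemma~\ref{LEM:EN}\ref{EN:IEQ:1} to $\ceps(t)$ for $\eps$ below a threshold $\eps_w$ and use the already-established bound on $\mathcal{E}_\eps(\ceps)$.

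\textbf{Strong solutions in 3D and uniform strong estimates.}
For $n=3$ I would build the strong solution by Faedo--Galerkin approximation (or a fixed-point scheme analogous to that of \cite{GGG2017}, which treats the 2D case) and pass to the limit via higher-order a priori estimates derived on the approximants. The two key building blocks are: (i) test \eqref{ModelH:NS} with $-\mathbf{P}_\sigma\Lap\vveps$ and invoke Stokes regularity \eqref{Stokes2} to control $\|\vveps\|_{\HH^2}$ in terms of $\|\Grad\vveps\|$ and lower-order nonlinearities handled by $\HH^1\emb\mathbf{L}^6$ and Agmon-type interpolation; (ii) differentiate \eqref{ModelH:CH1} in time and test with $\mathcal{N}(\delt\ceps)$ (using \ref{PRE:LAP}), which, after rewriting second-order terms in $\ceps$ exclusively via $\mueps$ through $\mathcal{L}_\eps\ceps=\mueps-f'(\ceps)$, yields control of $\|\delt\ceps\|_\ast$ and $\|\Grad\mueps\|$. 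Combining the two estimates and deploying Lemma~\ref{LEM:EN} to recover $H^1$-type information from $\mathcal{E}_\eps$-terms produces a Bernoulli-type differential inequality
\begin{equation*}
\tfrac{\mathrm d}{\mathrm dt}\Phi_\eps(t) + \Psi_\eps(t) \le C\big(1+\Phi_\eps(t)\big)^{\alpha},\qquad \alpha>1,
\end{equation*}
for $\Phi_\eps:=\|\Grad\vveps\|^2+\|\Grad\mueps\|^2$, where $C$ depends only on the quantities bounded in \eqref{assumption0} and on $C_0$. ODE comparison produces a common lower bound $T_\ast$ on the lifespan and the uniform bounds \eqref{regg-est1}; the bound \eqref{regg-est2} on the torus follows by re-invoking Lemma~\ref{LEM:EN} and elliptic regularity for $-\Lap\mueps=\delt\ceps+\vveps\cdot\Grad\ceps$. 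Uniqueness of the strong solution follows by a standard energy contraction in the $\Vsigma'$ and $\|\cdot\|_\ast$ norms, exploiting $|\ceps|\le 1$. The 2D assertions are already available in \cite{GGG2017,AGGP}; I would only revisit those estimates to verify $\eps$-uniformity.

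\textbf{Strict separation.}
For \eqref{delt} I follow the scheme of \cite{GGG2017} in 2D (under \ref{ASS:S2}) and of \cite{P} in 3D (under the stronger \ref{ASS:S3}): a Moser/De~Giorgi iteration applied to $(\ceps-(1-\delta))_+$, whose starting input is a growth-controlled $L^p$-estimate of the form $\|F'(\ceps(t))\|_{L^p}\le C\sqrt{p}$ in 2D, respectively $C p$ in 3D. This latter bound is produced by combining the strong-solution control of $\mueps$ with the asymptotic assumptions on $F'$, $F''$ in \ref{ASS:S2} / \ref{ASS:S3} (with the monotonicity of $F''$ near $\pm 1$ in \ref{ASS:S3} playing the crucial role in the 3D case). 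The instantaneous character ($t\ge\tau>0$) reflects that the necessary $L^\infty(\tau,T;H^1)$-regularity of $\mueps$ is only available after an arbitrarily small time; when $c_{\eps,0}$ is already separated the iteration can be initialized at $t=0$ via a H\"older-continuity-in-$L^\infty$ argument analogous to the one recalled in Remark~\ref{REM:WP:LOCAL}\ref{REM:WP:LOCAL:SEP}. Once \eqref{SP-Star} is in hand, $f''(\ceps)\in L^\infty(\Omega_T)$, and differentiating the Cahn--Hilliard subsystem in time and testing with $\delt\mueps$ produces $\delt\mueps\in L^2(0,T;L^2)$.

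\textbf{Expected main obstacle.}
The genuinely delicate step is the uniform-in-$\eps$ strong estimate in 3D. Any argument that uses $F''(\ceps)$ in strong norms (as one would naturally do in the local 3D case with strict separation) degrades as $\eps\to 0$ because the separation constant in \eqref{SP-Star} depends on $\eps$, and the nonlocal operator $\mathcal{L}_\eps$ does not provide $H^2$-regularity of $\ceps$ by itself. Routing every piece of second-order information through $\mueps$ via $\mathcal{L}_\eps\ceps=\mueps-f'(\ceps)$ and systematically converting $\mathcal{E}_\eps$-terms to genuine $H^1$-norms by Lemma~\ref{LEM:EN} is what makes the Bernoulli-type inequality above close with an $\eps$-independent constant; balancing this against the 3D Navier--Stokes nonlinearity is the key technical hurdle.
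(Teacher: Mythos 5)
Your overall strategy points in the right direction (energy estimates, comparison for time derivatives, nonlocal Poincar\'e from Lemma~\ref{LEM:EN}, Bihari-type closing for the 3D strong estimates), and the references you give are essentially the right ones. However, there are several concrete gaps where the details as written would fail.

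\textbf{The $L^1$-bound on $F'$.} You invoke the inequality
$\|F'(\ceps)\|_{L^1}\le C\bigl(\int_\Omega F(\ceps)\,\mathrm dx + 1\bigr)$ to control $\overline{\mueps}$. This inequality is false for singular potentials: for the Flory--Huggins potential $F$ is \emph{bounded} on $[-1,1]$ (so $\int F(\ceps)\le C|\Omega|$ always), while $F'$ blows up logarithmically near $\pm1$, hence $\int|F'(\ceps)|$ can be arbitrarily large with $\int F(\ceps)$ bounded. The correct inequality is
$\int_\Omega|F'(\ceps)|\,\mathrm dx\le C\int_\Omega F'(\ceps)\bigl(\ceps-\overline{\ceps}\bigr)\,\mathrm dx + C$,
valid because $|\overline{\ceps}|<1$; the right-hand side is then controlled by testing \eqref{ModelH:CH2} with $\ceps-\overline{\ceps}$ and using nonnegativity of the $\mathcal{L}_\eps$-term together with the Poincar\'e--Wirtinger inequality for $\mueps-\overline{\mueps}$. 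Your route via $\int F$ alone does not close.

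\textbf{The $H^1$-bound on the torus.} You claim that Lemma~\ref{LEM:EN}\ref{EN:IEQ:1} applied to $\ceps$, combined with the bound on $\mathcal{E}_\eps(\ceps)$ from the energy estimate, gives \eqref{regg-weak-est2}. But the lemma reads $\|u\|_{H^1}^2\le \mathcal{E}_\eps(\Grad u)+C_\gamma\|u\|^2$: the relevant quantity is $\mathcal{E}_\eps(\Grad\ceps)$, not $\mathcal{E}_\eps(\ceps)$, and the energy inequality only controls the latter. The nonlocal energy $\mathcal{E}_\eps$ does not control $\|\Grad\cdot\|$ for fixed $\eps>0$. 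The missing step is to test \eqref{ModelH:CH2} with $-\Lap\ceps$ and use the commutation $\Grad(J_\eps\ast\ceps)=J_\eps\ast\Grad\ceps$ (valid only for $\Omega=\TTn$) to produce $\mathcal{E}_\eps(\Grad\ceps)$ on the left-hand side, then absorb the $\theta_0\|\Grad\ceps\|^2$ using Lemma~\ref{LEM:EN}\ref{EN:IEQ:1}.

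\textbf{The 3D strong estimate.} You propose differentiating \eqref{ModelH:CH1} in time and testing with $\mathcal{N}(\delt\ceps)$. This yields the evolution of $\|\delt\ceps\|_\ast^2$, whereas the quantity you then feed into your Bernoulli inequality is $\Phi_\eps=\|\Grad\vveps\|^2+\|\Grad\mueps\|^2$: your test function never produces the $\tfrac{\mathrm d}{\mathrm dt}\|\Grad\mueps\|^2$ term. The paper instead tests \eqref{ModelH:CH1} (undifferentiated) with $-\delt\mueps$ and adds the time-differentiated \eqref{ModelH:CH2} tested with $\delt\ceps$; this produces precisely $\tfrac12\tfrac{\mathrm d}{\mathrm dt}\|\Grad\mueps\|^2$ plus nonnegative terms $2\mathcal{E}_\eps(\delt\ceps)$ and $\int F''(\ceps)|\delt\ceps|^2$, and the natural Lyapunov functional turns out to be $H_\eps(t)=\tfrac12\|\Grad\mueps(t)\|^2-\int_\Omega\vveps(t)\cdot\Grad\mueps(t)\,\ceps(t)\,\mathrm dx$, which is comparable to $\|\Grad\mueps\|^2$. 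You should also be aware that this testing is \emph{not} directly rigorous at $t=0$ unless the initial datum is truncated: one sets $c_{\eps,0}^k=\sigma_k\circ c_{\eps,0}$ with a Lipschitz cut-off $\sigma_k$ so that $\mueps^k\in C([0,T];H^1(\Omega))$ (this is the content of \cite[Formula~(3.9)]{PS}), carries out the estimate at the level $k$, and then passes to $k\to\infty$. Without this your manipulation of the pairing $\int\vveps\cdot\Grad\ceps\,\delt\mueps$ at $t=0$ is not justified.

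\textbf{Minor points.} The $L^p$-growth you quote for the De~Giorgi iteration in 3D ($\|F'\|_{L^p}\le Cp$) is pessimistic: the paper derives $\|F'(\ceps)\|_{L^\infty(0,T;L^p)}\le C\sqrt p$ uniformly in $\eps$ for $p\in[2,6]$ (cf.~\eqref{FLp}) by testing \eqref{ModelH:CH2} with $|F'(\ceps)|^{p-2}F'(\ceps)$ and using the monotonicity of $r\mapsto|F'(r)|^{p-2}F'(r)$ to discard the nonlocal term. Also, for the uniform control of $H_\eps(0)$ one has to verify $\norm{\Grad\mu_{\eps,0}^k}\to\norm{\Grad\mu_{\eps,0}}$ as $k\to\infty$ and then invoke \eqref{assumption0}; this step is glossed over in your outline.
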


\medskip

\begin{remark}
We remark that, in case $\Omega=\TTn$, the assumptions on the initial data for strong solutions already entail that 
\begin{align*}
    &\intO F''(c_{\eps,0})\vert \Grad c_{\eps,0}\vert^2 \dx +\frac14\intO \intO J_\eps(x-y)\vert \Grad c_{\eps,0}(x)-\Grad c_{\eps,0}(y)\vert^2 \dx \dy  
    \\
    &\quad \leq C(1+\Vert \Grad\mu_{\eps,0}\Vert^2)\leq C
\end{align*}
with a constant $C>0$ that does not depend on $\eps$, as long as $\eps$ is sufficiently small. In fact, this estimate can be shown similarly as estimate \eqref{essen}, which will be derived in the proof of Theorem~\ref{THM:WP:NONLOCAL}.
\end{remark}

\subsection{Nonlocal-to-local convergence for strong solutions of the Model~H}
As our second main result, which is stated in Theorem~\ref{THM:NLTL}, we establish the nonlocal-to-local convergence of the Model~H. More precisely, we show that for any suitable sequence of initial data, the strong solutions of the nonlocal Model~H with $\eps>0$ converge to a strong solution of the local Model~H as the parameter $\eps$ is sent to zero. This convergence is quantified by certain convergence rates.

\begin{theorem}\label{THM:NLTL}
Suppose that the assumptions \ref{ASS:GEN}--\ref{ASS:JEPS} and \ref{ASS:S1} hold, and if $n=2$, we further assume that \ref{ASS:S2} holds. If $\Omega$ is a bounded domain, we set $\alpha:=\tfrac 12$, and if $\Omega=\TTn$, we set $\alpha:=1$.

We prescribe initial data $\vv_0\in\Vsigma$ and $c_0\in H^2(\Omega)$ with $\|c_0\|_{L^\infty(\Omega)} \le 1$,  $|\ov{c_0}|<1$ and $-\Lap c_0 + f'(c_0) \in H^1(\Omega)$.
If $\Omega$ is a bounded domain, we additionally assume $\deln c_0 = 0$ a.e.~on $\Gamma$, and if $n=3$, we further assume that $\Vert c _{0}\Vert _{L^{\infty}(\Omega )}\leq 1-\delta _{0}$ for some $\delta _{0}\in (0,1)$. 
This ensures the existence of the corresponding unique right-maximal strong solution
$$(\vv,p,c,\mu):\Omega\times [0,T_{\star}) \to \R^n\times\R\times\R\times\R$$
to the local Model~H,
which satisfies the properties \ref{K1loc}--\ref{K3loc} of Proposition~\ref{PROP:WP:LOCAL}.

For any $\eps>0$, we prescribe initial data $\vv_{\eps,0}\in \LL_{\sigma }^{2}(\Omega )$  and $c _{\eps,0}\in L^{\infty }(\Omega )$ with $|\overline{c_{\eps,0}}|<1$, $F^{\prime }(c _{\eps, 0})\in L^{2}(\Omega )$ and $F^{\prime \prime }(c _{\eps,0})\Grad c _{\eps,0}\in
\LL^{2}(\Omega)$. 
We further demand that there exists constants $C_0,C_1,C_2>0$ independent of $\eps$ such that 
\begin{align}
    \label{initiald:0}
    E_\eps(\vv_{\eps,0},c_{\eps,0})
    &\leq C_0,
    \\
    \label{initiald:1}
    \norm{D\vv_{\eps,0}}+\norm{\Grad\mu_{\eps,0}}
    &\leq C_1,
    \\
    \label{initiald:2}
    \norm{\vv_{\eps,0}-\vv_0}_{\sigma}  + \norm{c_{\eps,0}-c_0-( \ov{c_{\eps,0}}-\ov{c_0})}_{\ast} + \abs{\ov{c_{\eps,0}}-\ov{c_0}}
    &\leq C_2\eps^\alpha,
\end{align}
where $\mu_{\eps,0}:=\mathcal{L}_\eps c_{\eps,0}+f'(c_{\eps,0})$.
This ensures the existence of the corresponding unique right-maximal strong solution
$$(\vveps,\peps,\ceps,\mueps):\Omega\times [0,T_{\eps,*}) \to \R^n\times\R\times\R\times\R$$
to the nonlocal Model~H associated with $\eps$,
which satisfies the properties \ref{K1} and \ref{K2} of Theorem~\ref{THM:WP:NONLOCAL}.%

We now choose $T_0>0$ as in Proposition~\ref{PROP:WP:LOCAL} and $T_\ast>0$ as in Theorem~\ref{THM:WP:NONLOCAL}, and we set $T_\diamond := \infty$ if $n=2$ and $T_\diamond := \min\{T_0,T_\ast\}$ if $n=3$.
Then, for any $T\in (0,T_\diamond)$, there exists a constant $C(T)>0$ independent of $\eps$ such that
\begin{align}
    \label{finale}
    \begin{aligned}
    &\norm{\vv_{\eps}-\vv}_{L^\infty(0,T;\Vsigma')}
    +\norm{c_{\eps}-c}_{L^\infty(0,T;H^1(\Omega)')}
    \\
    &\quad
    + \norm{\vveps-\vv}_{L^2(0,T;L^2(\Omega))}
    +\norm{\ceps-c}_{L^2(0,T;L^2(\Omega))}
    + \int_0^T \mathcal{E}_\eps(\ceps-c) \dt
    \leq C(T)\eps^\alpha
    \end{aligned}
\end{align}
for all $\eps \in (0,\eps_s]$, where $\eps_s$ is the number introduced in Theorem~\ref{THM:WP:NONLOCAL}.
\end{theorem}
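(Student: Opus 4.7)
The approach is a quantitative energy method comparing the two solutions. Setting $\tvv := \vveps - \vv$, $\tc := \ceps - c$, $\tmu := \mueps - \mu$, mass conservation forces $\ov{\tc}\equiv \ov{c_{\eps,0}}-\ov{c_0}$ in time, and this quantity is of size $O(\eps^\alpha)$ by \eqref{initiald:2}. The key algebraic splitting
\begin{equation*}
\tmu \;=\; \mathcal{L}_\eps \tc \;+\; \big(\mathcal{L}_\eps c - \mathcal{L}_0 c\big) \;+\; \big(f'(\ceps) - f'(c)\big)
\end{equation*}
isolates the nonlocal-to-local defect $(\mathcal{L}_\eps - \mathcal{L}_0)c$, which by Proposition~\ref{PROP:NLTL} is of order $\eps^\alpha$ in $L^2(\Omega)$ since $c\in L^\infty(0,T_\diamond; H^3(\Omega))$ via the strict separation property of Proposition~\ref{PROP:WP:LOCAL}.

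I would test the momentum difference with $A_S^{-1}\tvv\in\Vsigma$ (which annihilates the pressure and produces $\tfrac12\ddt\|\tvv\|_\sigma^2 + \|\tvv\|^2$) and test the Cahn--Hilliard difference with $\mathcal{N}(\tc-\ov{\tc})\in H^1_{(0)}(\Omega)$. Using $(\Lap\tmu,\mathcal{N}(\tc-\ov{\tc}))=-(\tmu,\tc-\ov{\tc})$ and inserting the splitting above yields $\tfrac12\ddt\|\tc-\ov{\tc}\|_\ast^2$ on the left, plus the dissipation $2\mathcal{E}_\eps(\tc)$ from $(\mathcal{L}_\eps\tc,\tc-\ov{\tc})$; the defect is estimated by $\eta\|\tc-\ov{\tc}\|^2 + C\eps^{2\alpha}\|c\|_{H^3}^2$; and the potential term $(f'(\ceps)-f'(c),\tc-\ov{\tc})$ is bounded below using $F''\geq\theta$ and $|\ov{\tc}|\leq C\eps^\alpha$ by $(\theta-\theta_0)\|\tc\|^2 - O(\eps^\alpha)$. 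The resulting negative piece $(\theta-\theta_0)\|\tc\|^2$ is absorbed into $2\mathcal{E}_\eps(\tc)$ via Lemma~\ref{LEM:EN}\ref{EN:IEQ:2} (with $\gamma<2/(\theta_0-\theta)$), at the cost of a Gronwall-compatible multiple of $\|\tc-\ov{\tc}\|_\ast^2$.

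The main obstacle is controlling the convective and capillary cross-terms so that every non-dissipative contribution falls into one of three classes: (i) $O(\eps^{2\alpha})$ remainders, (ii) terms absorbable into $\|\tvv\|^2$, $\mathcal{E}_\eps(\tc)$ or $\|\tc\|^2$, or (iii) terms of the form $C(t)\big(\|\tvv\|_\sigma^2+\|\tc-\ov{\tc}\|_\ast^2\big)$ with $C\in L^1(0,T)$. The inertial terms $((\tvv\cdot\Grad)\vveps,A_S^{-1}\tvv)$ and $((\vv\cdot\Grad)\tvv,A_S^{-1}\tvv)$ are handled by H\"older/Sobolev together with the uniform bounds $\vveps,\vv\in L^2(0,T;\HH^2(\Omega))$ from Proposition~\ref{PROP:WP:LOCAL} and Theorem~\ref{THM:WP:NONLOCAL}. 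The Cahn--Hilliard-advective terms $(\tvv\cdot\Grad\ceps,\mathcal{N}(\tc-\ov{\tc}))$, $(\vv\cdot\Grad\tc,\mathcal{N}(\tc-\ov{\tc}))$ are treated by integrating by parts against the divergence-free fields so as to move the gradient off $\ceps$ or $\tc$. For the capillary term $(\mueps\Grad\ceps-\mu\Grad c,A_S^{-1}\tvv)$, I would use the decomposition $\mueps\Grad\ceps-\mu\Grad c=\mu\Grad\tc+\tmu\Grad\ceps$; the first summand reduces by IBP to $-(\tc\Grad\mu,A_S^{-1}\tvv)$, controlled by $\Grad\mu\in L^2(0,T;L^\infty(\Omega))$ from $\mu\in L^2_t H^3$ in the local theory. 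For the second summand one substitutes the splitting of $\tmu$: the potential contribution, using that $f'(\ceps)\Grad\ceps=\Grad f(\ceps)$ kills against the div-free test function and the algebraic identity $cf''(c)\Grad c=\Grad\big(\int_0^c sf''(s)\,\mathrm{d}s\big)$, collapses to $(\tc f''(c)\Grad c,A_S^{-1}\tvv)$, absorbed via the separation of $c$; the defect contribution is $O(\eps^\alpha)$ by Proposition~\ref{PROP:NLTL}; and the remaining $(\mathcal{L}_\eps\tc)\Grad\ceps$-piece is the most delicate point, estimated with the uniform bounds of Theorem~\ref{THM:WP:NONLOCAL}\ref{K3}, notably \eqref{regg-est2} in the torus case.

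Summing the two tested equations and combining the above estimates yields an inequality
\begin{equation*}
\ddt\big(\|\tvv\|_\sigma^2 + \|\tc-\ov{\tc}\|_\ast^2\big) + \|\tvv\|^2 + \mathcal{E}_\eps(\tc) + \|\tc\|^2 \;\leq\; C(t)\big(\|\tvv\|_\sigma^2+\|\tc-\ov{\tc}\|_\ast^2\big) + K\eps^{2\alpha}
\end{equation*}
with $\int_0^T C(t)\dt<\infty$ for every $T\in(0,T_\diamond)$. Gronwall's inequality combined with the initial-data bound $\|\tvv_0\|_\sigma^2 + \|\tc_0-\ov{\tc_0}\|_\ast^2 \leq C\eps^{2\alpha}$ from \eqref{initiald:2} delivers the $L^\infty_t$-part of \eqref{finale}; time-integration of the dissipation produces the $L^2(0,T;L^2(\Omega))$-bounds on $\tvv, \tc$ and the $L^1(0,T)$-bound on $\mathcal{E}_\eps(\tc)$. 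In 3D, the restriction $T<T_\diamond=\min\{T_0,T_\ast\}$ is needed precisely to guarantee that the separation property \eqref{SP-Starloc2} remains in force, so that $c\in L^\infty(0,T;H^3)$ holds uniformly in $t$.
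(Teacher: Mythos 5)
Your test functions ($A_S^{-1}\tvv$ for momentum, $\mathcal{N}(\tc-\ov{\tc})$ for the concentration), the splitting of $\tmu$ isolating the defect $(\mathcal{L}_\eps-\mathcal{L}_0)c$, the use of Proposition~\ref{PROP:NLTL} with $c\in L^\infty_t H^3$ from the strict separation property, and the Gronwall closure all match the paper's strategy. However, two of the concrete estimates break down.

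First, your capillary decomposition $\mueps\Grad \ceps-\mu\Grad c = \mu\Grad\tc + \tmu\Grad\ceps$ puts the nonlocal quantity $\tmu$ on $\Grad\ceps$, whereas the paper uses $\mueps\Grad\tc + \tmu\Grad c$, putting $\tmu$ on $\Grad c$. This is not cosmetic. The piece $\intO (\mathcal{L}_\eps\tc)\,\Grad\ceps\cdot A_S^{-1}\tvv\dx$ that your route generates has no uniform-in-$\eps$ bound: $\mathcal{L}_\eps\tc$ is only accessible through $\mathcal{E}_\eps(\tc)$ via the duality $\bigl|\int\mathcal{L}_\eps u\,v\bigr|\le 2\sqrt{\mathcal{E}_\eps(u)}\sqrt{\mathcal{E}_\eps(v)}$, so one would need to control $\mathcal{E}_\eps(\Grad\ceps\cdot A_S^{-1}\tvv)\lesssim\|\Grad\ceps\cdot A_S^{-1}\tvv\|_{H^1(\Omega)}^2$, which requires $D^2\ceps$ — not available uniformly in $\eps$ (even \eqref{regg-est2}, which is torus-only, gives just $\ceps\in L^\infty_t H^1$). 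Integration by parts does not help either, since $\Grad\mathcal{L}_\eps$ brings in $\|J_\eps\|_{W^{1,1}}$, which blows up as $\eps\searrow 0$. Moreover, for a bounded domain (the $\alpha=\tfrac12$ case claimed by the theorem) one has no uniform bound on $\Grad\ceps$ at all, so even the defect piece $(\mathcal{L}_\eps c+\Lap c)\Grad\ceps$ cannot be closed; your appeal to \eqref{regg-est2} ``in the torus case'' implicitly concedes this. The paper's choice avoids the problem entirely: after IBP, $\mueps\Grad\tc$ becomes $-\tc\Grad\mueps$ with $\Grad\mueps\in L^\infty_t L^2$ uniform, and $\tmu\Grad c$ only ever multiplies $\Grad c$, whose $H^2$-control is uniform thanks to separation, so $\intO \mathcal{L}_\eps\tc\,\Grad c\cdot A_S^{-1}\tvv\le 2\sqrt{\mathcal{E}_\eps(\tc)}\sqrt{\mathcal{E}_\eps(\Grad c\cdot A_S^{-1}\tvv)}\lesssim \sqrt{\mathcal{E}_\eps(\tc)}\,\|c\|_{H^3}\|\tvv\|_\sigma$ can be absorbed.

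Second, in the Cahn--Hilliard test you bound the potential term below by $(\theta-\theta_0)\|\tc\|^2 - O(\eps^\alpha)$, the $O(\eps^\alpha)$ coming from $\ov{\tc}\intO\bigl(f'(\ceps)-f'(c)\bigr)$ together with a uniform $L^1$ bound on $f'$. That only yields $\|\tvv\|_\sigma^2+\|\tc-\ov{\tc}\|_\ast^2\lesssim\eps^\alpha$ after Gronwall, i.e.\ rate $\eps^{\alpha/2}$, not the claimed $\eps^\alpha$. What is needed (the paper's Step~1) is the Lipschitz-type estimate $\|f'(\ceps)-f'(c)\|_{L^1(\Omega)}\le K_\delta\|\ceps-c\|$, obtained by splitting $\Omega$ into the set where $|\ceps|\ge 1-\delta$ (shown via the separation of $c$ and Chebyshev to have measure controlled by $\delta^{-2}\|\tc\|^2$) and its complement (where $f'$ is Lipschitz). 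With this bound, $|\ov{\tc}|\,\|f'(\ceps)-f'(c)\|_{L^1}\le\eta\|\tc\|^2+C|\ov{\tc}|^2=\eta\|\tc\|^2+O(\eps^{2\alpha})$, which is the right order and is also needed in the momentum estimate for $(f'(\ceps)-f'(c))\Grad c\cdot A_S^{-1}\tvv$. Your algebraic-identity route (collapsing to $(\tc f''(c)\Grad c, A_S^{-1}\tvv)$ via IBP) would avoid this particular use, but you still need the Step~1 estimate in the Cahn--Hilliard test, so it cannot be dispensed with.
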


\medskip

\begin{remark}
    \begin{enumerate}[label=\textnormal{(\alph*)},leftmargin=*]
    \item As the convergence rates are mainly inherited from Proposition~\ref{PROP:NLTL}, we obtain a higher convergence rate if $\Omega=\TTn$ than in the case of $\Omega$ being a bounded domain in $\R^n$.
    \item We point out that assuming a strictly separated initial datum $c_0$ in the case $n=3$ is necessary to prove the assertion, as the strict separation property \eqref{SP-Starloc2} is essential. 
    In the case $n=2$, however, the strict separation of the initial datum $c_0$ does not have to be imposed as an additional assumption (see also Remark~\ref{REM:WP:LOCAL}\ref{REM:WP:LOCAL:SEP}). 
    Moreover, it is worth mentioning that assuming strict separation of the initial data $\{c_{\eps,0}\}_{\eps>0}$ is not necessary, not even in three dimensions.
    \end{enumerate}
\end{remark}

% \newpage

\section{Proof of Theorem~\ref{THM:WP:NONLOCAL}} 
\label{PROOF:WELLPOSEDNESS}

\subsection{Existence of weak and strong solutions} 
\label{SUB:EX}

In the case $n=2$, under the respective assumptions made in Theorem~\ref{THM:WP:NONLOCAL}, the existence of a weak solution satisfying \ref{E1} and \ref{E2} has already been established in \cite{Frigeri2021}, and the existence of a strong solution satisfying \ref{K1} and \ref{K2} has been shown in \cite{GGG2017, AGGP}.
In fact, in \cite{Frigeri2021} and \cite{AGGP} even the more general case of unmatched densities is considered.

In the case $n=3$, the existence of a weak solution satisfying \ref{E1} and \ref{E2} has also been proven in \cite{Frigeri2021}.
The existence of a strong solution satisfying \ref{K1} and \ref{K2} can be shown by proceeding similarly as in \cite{AGGP}. More precisely, the uniform estimates that will be established in Subsection~\ref{SUB:EST:WEAK} and Subsection~\ref{SUB:EST:STRONG} can also be rigorously derived in the framework of a semi-Galerkin scheme as employed in \cite{AGGP}. This means that only the velocity field is discretized via a Galerkin ansatz, and the overall approximate solution is then constructed by means of a fixed point argument (as in \cite[Theorem 1.5]{AGGP}) relying on previous existence results for the convective nonlocal Cahn--Hilliard equation (see \cite[Theorem 2.2]{PS}).   

However, in contrast to the two-dimensional case, it cannot be shown that the constructed right-maximal strong solution exists for all times. This is, of course, due to the involved Navier--Stokes equation for which global existence of regular solutions in three dimensions is still an open problem. For the maximal existence time of strong solutions, a concrete lower bound $T_*$ that is uniform in $\eps$ will be explicitly derived in Subsection~\ref{SUB:EST:STRONG}.

\subsection{Uniqueness of the right-maximal strong solution}
\label{SUB:UNIQ}

In the case $n=2$, the proof of uniqueness of weak solutions to \eqref{ModelH} is quite standard, and we refer, for instance, to \cite[Theorem~6.2]{GGG2017}. The uniqueness of strong solutions to \eqref{ModelH} (even in the more general case of unmatched viscosities) has been shown in \cite[Theorem~1.9]{AGGP}. 
In the case $n=3$, the uniqueness of weak solutions is of course an open problem due to the involved Navier--Stokes equation. However, we are able to prove the uniqueness of the right-maximal strong solution.

Therefore, in the remainder of this subsection, we choose $n=3$, we fix an arbitrary $\eps>0$, and we set $T:=T_{\eps,*}$. As the choice of $\eps$ does not matter in this subsection, the index $\eps$ will simply be omitted.

Furthermore, in this subsection, the letter $C$ denotes generic positive constants that may depend on the choice of $\Omega$, the initial data and the system parameters including $\eps$.
The exact value of $C$ may vary in the subsequent line of argument.

We consider two sets of initial data $(\vv_{0,1},c_{0,1})$
and $(\vv_{0,2},c_{0,2})$ which satisfy the assumptions for the existence of strong solutions imposed in
Theorem \ref{THM:WP:NONLOCAL}. In addition, we assume that $\ov{c_{0,1}} = \ov{c_{0,2}}$.
For $i=1,2$, let $(\vv_i,p_i,c_i,\mu_i)$ denote a strong solution of \eqref{ModelH} associated with $\eps$ corresponding to the initial data $(\vv_{0,i},c_{0,i})$, respectively. We further write
\begin{align*}
    (\vv_{0},c_{0}) &:= (\vv_{0,1},c_{0,1}) - (\vv_{0,2},c_{0,2}),
    \\
    (\vv,p,c,\mu) &:= (\vv_1,p_1,c_1,\mu_1) - (\vv_2,p_2,c_2,\mu_2).
\end{align*}
This means that the quadruplet $(\vv,p,c,\mu)$ fulfills the following system of equations in the strong sense:
\begin{subequations}
\label{U:ModelH}
\begin{align}
    \label{U:ModelH:NS}
	&\delt \vv + (\vv_1 \cdot \Grad)\vv + (\vv \cdot \Grad)\vv_2 - \Lap\vv + \Grad p = \mu_1 \Grad c_1 - \mu_2\Grad c_2,
    \quad \Div(\vv) = 0
    &&\text{in}\;\Omega_T,
    \\
    \label{U:ModelH:CH1}
	&\delt c + \vv_1\cdot\Grad c + \vv\cdot\Grad c_2 = \Delta\mu
    \qquad&&\text{in}\;\Omega_T, 
    \\
    \label{U:ModelH:CH2}
	&\mu = \mathcal{L}_\eps c + F^{\prime }(c_{1})-F^{\prime }(c _{2})+\theta_0c
    \qquad&&\text{in}\;\Omega_T,
    \\
    \label{U:ModelH:IC}
    &\vv\vert_{t=0} = \vv_0, \quad c\vert_{t=0} = c_0
    &&\text{in}\; \Omega.
\end{align}
If $\Omega$ is a bounded domain, $(\vv,p,c,\mu)$ also satisfies the boundary conditions
\begin{align}
    \label{U:ModelH:BC}
    \vv = 0,\quad \deln \mu =0
    \quad\text{on}\; \Gamma_T.
\end{align}
\end{subequations}

Integrating \eqref{U:ModelH:CH1} over $\Omega$ and recalling that $\vv$ and $\vv_1$ are divergence-free, we first observe 
\begin{equation*}
    \ddt \intO c \dx 
    = \intO \delt c \dx
    = \intO \Div \big( \Grad \mu - \vv_1 c - \vv c_2 \big) \dx = 0
\end{equation*}
by means of Gau\ss's divergence theorem. This means that
\begin{equation}
    \ov{c}(t) = \ov{c_0} = 0 \quad\text{for all $t\in [0,T]$.}
\end{equation}
We now test \eqref{U:ModelH:NS} by $A_S^{-1}\vv$ (cf.~\ref{PRE:STOKES}) and
\eqref{U:ModelH:CH1} by $\mathcal{N}c$ (cf.~\ref{PRE:LAP}), and we add the resulting equations. Integrating by parts and invoking the identities
\begin{align*}
    \frac{1}{2} \ddt \norm{\vv}_{\sigma}^{2}
    &= \frac{1}{2} \ddt \Vert \Grad A_S^{-1} \vv \Vert^{2}
    = \langle \delt \vv , A_S^{-1} \vv \rangle_{\Vsigma},
    \\
    \frac{1}{2} \ddt \norm{c}_{*}^{2}
    &= \frac{1}{2} \ddt \Vert \Grad \mathcal{N}c \Vert^{2}
    = \langle \delt c , \mathcal{N}c \rangle_{H^1_{(0)}(\Omega)},
\end{align*}
we infer
\begin{equation}
\label{EQ:UNIQ:1}
    \begin{split}
    & \frac{1}{2} \ddt \Big(\norm{\vv}_\sigma^{2}+\norm{c} _{\ast}^{2}\Big) 
        +\norm{\vv}
        +(\mu ,c)
    \\
    &\quad = (\vv_{1}\otimes \vv,\Grad A_S^{-1}\vv)
        + (\vv\otimes \vv_{2},\Grad A_S^{-1}\vv) 
        - ( \vv_{1}\cdot \Grad c ,\mathcal{N}c )
    \\
    & \qquad 
        - ( \vv\cdot \Grad c _{2},\mathcal{N}c )
        + ( \mu_1 \Grad c_1 - \mu_2\Grad c_2 ,A_S^{-1}\vv ).
    \end{split}%
\end{equation}%
Replacing $\mu$ by means of \eqref{U:ModelH:CH2} and recalling the monotonicity of $F'$, the definition of 
$\mathcal{L}_\eps$ and the properties of $J_\eps$ (see~\ref{ASS:JEPS}), we use Young's inequality (both the version for products and the version for convolutions) to derive the estimate
\begin{align}
    \label{EQ:UNIQ:2}
    \begin{aligned}
    \left( \mu ,{c} \right) 
    &\geq \theta_0 \Vert {c} \Vert^{2}
        + (J_\eps\ast 1)\big(c,c\big) 
        - \big( J_\eps\ast c, c\big) 
    \\
    &\ge  \theta_0 \Vert {c} \Vert^{2}
        - \big( (\Grad J_\eps)\ast c, \Grad\mathcal{N}(c)\big) 
    \\
    &\ge \theta_0 \Vert {c} \Vert^{2}
        -\norm{J_\eps}_{W^{1,1}(X)} \, \norm{c} \, \norm{c}_\ast
    \\
    &\ge \frac{10}{16}\theta_0 \norm{c}^2 - C \norm{c}_*^2.
    \end{aligned}
\end{align}
Furthermore, recalling that the velocity fields $\vv_i$, ${i=1,2}$, are divergence-free, and using integration by parts, the Gagliardo--Nirenberg inequality, estimate \eqref{H_2} and Young's inequality, we deduce 
\begin{align}
    \label{EQ:UNIQ:3}
    \begin{aligned}
    \abs{( \vv_1 \cdot \Grad c,\mathcal{N}c )}
    &= \abs{(\vv_1 c,\Grad \mathcal{N}c)}
    \le \norm{\vv_1}_{\LL^6(\Omega)} \, \norm{c}\, \norm{\Grad\mathcal{N}c}_{\LL^3(\Omega)} 
    \\
    &\le C \norm{\vv_1}_{\LL^6(\Omega)} \, \norm{c}\, 
        \norm{\Grad\mathcal{N}c}_{\HH^1(\Omega)}^{1/2}
        \norm{\Grad\mathcal{N}c}^{1/2}
    \\
    &\le C \norm{\vv_1}_{\LL^6(\Omega)} \, \norm{c}^{3/2}\, 
        \norm{c}_{*}^{1/2}
    \\
    &\le \frac{\theta_0}{16}\norm{c}^2 + C \norm{\vv_1}_{\LL^6(\Omega)}^{4} \norm{c}_*^2.
    \end{aligned}
\end{align}
Proceeding similarly, we obtain
\begin{align}
    \label{EQ:UNIQ:4.1}
    \begin{aligned}
    \abs{(\vv_{1}\otimes \vv,\Grad A_S^{-1}\vv)}
    \le \norm{\vv_1}_{\LL^6(\Omega)} \, \norm{\vv} \, \norm{\Grad A_S^{-1}\vv}_{\LL^3(\Omega)}
    \le \frac{1}{8} \norm{\vv}^2 + C \norm{\vv_1}_{\LL^6(\Omega)}^{4} \norm{\vv}_\sigma^2,
    \end{aligned}
\end{align}
and analogously, we get
\begin{align}
    \label{EQ:UNIQ:4.2}
    \abs{(\vv\otimes \vv_2,\Grad A_S^{-1}\vv)}
    \le \frac{1}{8} \norm{\vv}^2 + C \norm{\vv_2}_{\LL^6(\Omega)}^{4} \norm{\vv}_\sigma^2. 
\end{align}
Recalling that $\abs{c}\le\abs{c_1}+\abs{c_2} <2$ a.e.~in $\Omega_T$ and that $\vv$ is divergence free, we further deduce
\begin{align}
    \label{EQ:UNIQ:5}
    \begin{aligned}
    \abs{( \vv\cdot \Grad c_{2},\mathcal{N}c )}
    &= \abs{(\vv c_2,\Grad \mathcal{N}c)}
    \le 2 \norm{\vv}_{\LL^2(\Omega)} \, \norm{\Grad\mathcal{N}c}
    \le \frac{1}{8} \norm{\vv}^2 + C \norm{c}_*^2.
    \end{aligned}
\end{align}
Furthermore, expressing $\mu_1$ and $\mu_2$ by means of \eqref{ModelH:CH2}, we deduce
\begin{align*}
    \begin{aligned}
    \big( \mu_1 \Grad c_1 - \mu_2\Grad c_2 , A_S^{-1}\vv \big) 
    &= \big( (J_\eps\ast 1) c_1 \, \Grad c_1 - (J_\eps\ast 1) c_2 \, \Grad c_2 ,A_S^{-1}\vv \big) 
    \\
    &\quad 
        + \big( (J_\eps\ast c_1) \, \Grad c_1 - (J_\eps\ast c_2) \, \Grad c_2 ,A_S^{-1}\vv \big)
    \\
    &\quad 
        + \big( \Grad F(c_1) - \Grad F(c_2) , A_S^{-1}\vv \big) 
    \\
    &\quad
        + \tfrac{1}{2}\theta_0 \big( \Grad(c_1^2) - \Grad(c_2^2) , A_S^{-1}\vv \big) .
    \end{aligned}
\end{align*}
As $A_S^{-1}\vv$ is divergence-free, the last two lines of the right-hand side vanish after integrating by parts. Moreover, reformulating the first two lines and using integration by parts, we obtain
\begin{align*}
    \begin{aligned}
    \big( \mu_1 \Grad c_1 - \mu_2\Grad c_2 , A_S^{-1}\vv \big) 
    &= \big( (J_\eps\ast 1) c_1 \, \Grad c + (J_\eps\ast 1) c \, \Grad c_2 ,A_S^{-1}\vv \big) 
    \\
    &\qquad 
        + \big( (J_\eps\ast c_1) \, \Grad c + (J_\eps\ast c) \, \Grad c_2 ,A_S^{-1}\vv \big) 
    \\[1ex]
    &= - \big( (\Grad J_\eps\ast 1) c_1 \, c , A_S^{-1}\vv \big) 
        - \big( (J_\eps\ast 1) c \, \Grad c , A_S^{-1}\vv \big) 
    \\
    &\qquad 
        + \big( (\Grad J_\eps\ast c_1) \, c , A_S^{-1}\vv \big) 
        - \big( (J_\eps\ast c) \, \Grad c_2 , A_S^{-1}\vv \big) 
    \\[1ex]
    &=: I_1 + I_2 + I_3 + I_4.
    \end{aligned}
\end{align*}
We now recall that $J_\eps \in W^{1,1}(X)$ (cf.~\ref{ASS:JEPS}) and that $\abs{c_1}<1$ a.e.~in $\Omega_T$. Invoking H\"older's inequality, Young's inequality (both for products and for convolutions) and Agmon's inequality along with the properties of the operator $A_S^{-1}$ (cf.~\ref{PRE:STOKES}), the terms $I_1,...,I_4$ can be estimated as follows:
\begin{align*}
    \abs{I_1} 
    &\le \norm{J_\eps}_{W^{1,1}(X)}\, \norm{c_1}_{L^\infty(\Omega)}\, 
        \norm{c}\, \norm{A_S^{-1}\vv} 
    \le \frac{\theta_0}{16}\norm{c}^{2} + C \norm{\vv} _{\sigma}^{2}\,,
    \\[1ex]
    \abs{I_2}
    &\le \norm{J_\eps}_{W^{1,1}(X)}\, \norm{c} \big(\norm{\Grad c_1} + \norm{\Grad c_2} \big) 
        \norm{A_S^{-1}\vv}_{\LL^\infty(\Omega)}
    \\
    &\le C \norm{c}\, \norm{A_S^{-1}\vv}_{\HH^2(\Omega)}^{1/2}\, \norm{A_S^{-1}\vv}_{\HH^1(\Omega)}^{1/2} 
    \le C \norm{c}\, \norm{\vv}^{1/2}\, \norm{\vv}_\sigma^{1/2}
    \\
    &
    \le \frac{\theta_0}{16}\Vert c\Vert^{2} + \frac{1}{4} \norm{\vv}^2 + C \norm{\vv}_\sigma^2\,,
    \\[1ex]
    \abs{I_3}
    &\le \norm{J_\eps}_{W^{1,1}(X)}\, \norm{c_1}_{L^\infty(\Omega)}\, 
        \norm{c} \norm{A_S^{-1}\vv}\,    
    \le \frac{\theta_0}{16}\Vert c\Vert ^{2}+C \norm{\vv} _{\sigma}^{2}\,,
    \\[1ex]
    \abs{I_4}
    &\le \norm{J_\eps}_{W^{1,1}(X)}\, \norm{c}\, \norm{\Grad c_2} \norm{A_S^{-1}\vv} 
    \le \frac{\theta_0}{16}\norm{c}^{2} + C \norm{\vv} _{\sigma}^{2}.
\end{align*}
In summary, we thus have
\begin{equation}
    \label{EQ:UNIQ:6}
    \bigabs{ \big( \mu_1 \Grad c_1 - \mu_2\Grad c_2 , A_S^{-1}\vv \big) }  
    \le \frac{\theta_0}{4} \norm{c}^2 + \frac{1}{8} \norm{\vv}^2
        + C \norm{\vv}_{\sigma}^2 .
\end{equation}
Combining \eqref{EQ:UNIQ:1}--\eqref{EQ:UNIQ:6}, we conclude
\begin{equation*}
% \label{EQ:UNIQ:7}
    \begin{split}
    & \frac{1}{2} \ddt \Big(\norm{\vv}_\sigma^{2}+\norm{c} _{\ast}^{2}\Big) 
        + \frac{1}{2} \norm{\vv}
        + \frac{\theta_0}{2} \norm{c}
    \\
    &\quad \le 
        C \Big( 1 + \norm{\vv_1}_{\LL^6(\Omega)}^{4} + \norm{\vv_2}_{\LL^6(\Omega)}^{4} \Big) 
            \Big(\norm{\vv}_\sigma^{2}+\norm{c} _{\ast}^{2}\Big) .
    \end{split}%
\end{equation*}%
Applying Gronwall's lemma, and recalling that $\vv_i\in L^4(0,T;\LL^6(\Omega))$, $i=1,2$, we eventually obtain
\begin{align*}
    \norm{\vv(t)}_\sigma^{2}+\norm{c(t)} _{\ast}^{2}
    \le \big(\norm{\vv_0}_\sigma^{2}+\norm{c_0}_{\ast}^{2} \big)
        \exp\left(\int_0^t 
        \Big( 1 + \norm{\vv_1(s)}_{\LL^6(\Omega)}^{4} 
            + \norm{\vv_2(s)}_{\LL^6(\Omega)}^{4} \Big) \ds \right)
\end{align*}
for all $t\in [0,T]$. As the right-hand side vanishes if $\vv_{0,1}=\vv_{0,2}$ and $c_{0,1}=c_{0,2}$ a.e.~in $\Omega$, this proves the uniqueness of the corresponding strong solution.

\subsection{Uniform estimates for weak solutions} 
\label{SUB:EST:WEAK}

We now want to verify item \ref{E3} of Theorem~\ref{THM:WP:NONLOCAL}.
To this end, let $\eps>0$ be arbitrary, and let $(\vveps,\ceps,\mueps)$ be a corresponding weak solution to \eqref{ModelH} that can be constructed by a semi-Galerkin scheme explained in Subsection~\ref{SUB:EX}. We point out that all the following computations can be carried out rigorously within this semi-Galerkin scheme as the associated approximate solutions are sufficiently regular. Eventually, by passing to the limit in the approximation parameter, the obtained uniform bounds hold true for the considered weak solution $(\vveps,\ceps,\mueps)$.

From now on, the letter $C$ denotes generic positive constants that may depend only on the choice of $\Omega$, the initial data and the system parameters, but not on $\eps$.
The exact value of $C$ may vary throughout this proof.

Testing the equations \eqref{ModelH:NS} by $\vveps$, \eqref{ModelH:CH1} by $\mueps$ and \eqref{ModelH:CH2} by $\delt\ceps$, and using integration by parts, we derive the energy inequality
\begin{align}
E_\eps\big(\vveps(t),\ceps(t)\big)+\int_0^t \norm{D\vveps(s)}^2\ds +\int_0^t \norm{\Grad\mueps(t)}^2 \ds \leq E_\eps(\vv_{\eps,0},c_{\eps,0}) \le C_0
  \label{basic}
\end{align}
for all $t\ge 0$. 
We point out that, in a rigorous semi-Galerkin scheme, we initially merely obtain the local-in-time existence of an approximate solution. However, as this approximate solution fulfills a discrete version of \eqref{basic} as long as it exists, we can use this estimate to conclude that the approximate solution can actually be extended onto $[0,\infty)$.

Let now $T>0$ be arbitrary and let $C(T)$ denote generic positive constants that may depend only on $\Omega$, the initial data and the system parameters, but not on $\eps$. The exact value of $C(T)$ may vary in the subsequent line of argument.

In view of \ref{ASS:S1}, the boundedness of the energy resulting from \eqref{basic} already entails
\begin{align}
    \label{EST:CEPS:1}
    |\ceps| < 1 \quad\text{a.e.~in $\Omega_T$.}
\end{align}
Using this bound as well as Korn's inequality, we further conclude from \eqref{basic} that
\begin{align}
    \label{basic2}
    \begin{aligned}
    &\|\ceps\|_{L^\infty(0,T;L^\infty(\Omega))} 
    + \|\Grad\mueps\|_{L^2(0,T;\LL^2(\Omega))}
    \\
    &\quad
    + \|\vveps\|_{L^\infty(0,T;\LL^2(\Omega))}
    + \|\vveps\|_{L^2(0,T;\HH^1(\Omega))}
    \le C(T).
    \end{aligned}
\end{align}
We now recall the inequality
\begin{align}
\label{EST:F:1}
\intO |F^\prime(\ceps)|\dx \leq C\intO F^\prime(\ceps)(\ceps - \overline{c}_\eps)\dx + C
\quad\text{a.e.~in $[0,T]$,}
\end{align} 
which can be found, e.g., in \cite[Proposition~4.3.]{Miranville-book}.
Testing \eqref{ModelH:CH2} by $\ceps - \overline{c}_\eps$, we obtain
\begin{align}\label{sing7}
\intO \mueps(\ceps - \overline{c}_\eps)\dx = \intO \mathcal{L}_\eps \ceps (\ceps - \overline{c}_\eps)\dx + \intO \big( F^\prime(\ceps) - \theta_0\big)(\ceps - \overline{c}_\eps)\dx .
\end{align}
By the definition of the mean, the left-hand side can be reformulated as
\begin{align*}
    \intO \mueps(\ceps - \overline{c}_\eps)\dx = \intO (\mueps - \overline{\mu}_\eps)\ceps \dx.
\end{align*}
In view of the properties of $\mathcal{L}_\eps$, the first term on the right-hand side of \eqref{sing7} is nonnegative.
Due to \eqref{basic2}, the Poincar\'e--Wirtinger inequality yields
\begin{align}
\label{EST:F:2}
\left|\intO F^\prime(\ceps)(\ceps-\overline{c}_\eps)\dx\right| 
\leq C\big(1 + \|\Grad\mueps\|\big).
\end{align}
Hence, by means of \eqref{EST:F:1}, we conclude
\begin{align}
\label{EST:F:2*}
\intO |F^\prime(\ceps)|\dx 
\leq C\big(1 +  \|\Grad\mueps\|\big).
\end{align}
Consequently, it holds
\begin{align}
|\overline{\mu}_\eps| 
= \big|\overline{F^\prime(\ceps)} - \theta_0\overline{c}_\eps\big|
\leq C\big(1 + \|\Grad\mueps\|\big).
\label{overcontrol}
\end{align}
Recalling \eqref{basic2} and applying Poincar\'e's inequality, we thus conclude
\begin{align}
\label{basic3}
\|\mueps\|_{L^2(0,T;H^1(\Om))}
\le C(T).
\end{align}
By comparison in \eqref{ModelH:CH1}, we further have
\begin{align}
\norm{\delt \ceps}_{L^2(0,T;H^1(\Om)')}\leq C(T)
\label{dtc}
\end{align}
with the help of \eqref{basic2} and \eqref{basic3}.
Furthermore, using again \eqref{basic2} and \eqref{basic3}, and recalling the definition of $\mathbf{P}_\sigma$ (see~\ref{PRE:STOKES}), we deduce
\begin{equation*}
    \bignorm{\mathbf{P}_\sigma\big[\mueps\Grad\ceps\big]}_{\sigma}\leq \Vert \Grad\mueps\Vert . 
\end{equation*}
Performing the usual estimates for the remaining terms in the Navier--Stokes equation \eqref{ModelH:NS}, we conclude
\begin{equation*}
    % \mathbf f:= 
    \big\|\mathbf{P}_\sigma\big[(\vveps\cdot\Grad)\vveps +2\Div(\nu D\vveps)\big]\big\|_{L^{\frac4n}(0,T;\Vsigma')} \le C(T),
\end{equation*}
which directly yields
\begin{align}
\norm{\delt \vveps}_{L^\frac4n(0,T;\Vsigma')}\leq C(T)
 \label{dv}
\end{align}
by a further comparison argument. Combining \eqref{basic2}, \eqref{basic3}, \eqref{dtc} and \eqref{dv}, we have thus verified \eqref{regg-weak-est1}.
 
If $\Omega=\TTn$, we further test \eqref{ModelH:CH2} by $-\Lap\ceps$. After integrating by parts, we use the identity
\begin{align}
    \label{ID:LEPS}
    \begin{aligned}
    &\intO \Grad \big(\mathcal{L}_\eps\ceps\big) \cdot \Grad\ceps \dx 
    =\intO \mathcal{L}_\eps\Grad\ceps\cdot \Grad\ceps \dx 
    \\
    &\quad =\frac12\intO \intO J_\eps(x-y)\vert \Grad \ceps(x)-\Grad \ceps(y)\vert^2 \dx \dy
    \end{aligned}
\end{align}
to deduce
\begin{align}
    \begin{aligned}
    &\intO F''(\ceps)\vert \Grad \ceps\vert^2 \dx
    + \frac12\intO \intO J_\eps(x-y)\vert \Grad \ceps(x)-\Grad \ceps(y)\vert^2 \dx \dy
    \\
    &\qquad 
    = \intO \Grad\mueps\cdot\Grad\ceps \dx
    + \theta_0\norm{\Grad\ceps}^2.
    \end{aligned}
    \label{muu}
\end{align}
We point out that \eqref{ID:LEPS} follows from the relation $\Grad (J_\eps\ast \ceps) = J_\eps\ast \Grad\ceps$, which holds if $\Omega=\TTn$, but is (in general) not valid if $\Omega$ is a bounded domain.
Exploiting Lemma~\ref{LEM:EN}\ref{EN:IEQ:1} with $\gamma=\frac1{4\theta_0}$, we find $\eps_w=\eps_w(\theta_0)>0$ such that 
\begin{align}
\label{est:theta0}
 \theta_0\norm{\Grad\ceps}^2\leq \frac 1 4\intO \intO J_\eps(x-y)\vert \Grad \ceps(x)-\Grad \ceps(y)\vert^2 \dx \dy  +C\norm{\ceps}^2
\end{align}
if $\eps\in (0,\eps_w]$.
Combining \eqref{muu} and \eqref{est:theta0}, we infer
\begin{align}
    \begin{aligned}
    &\intO F''(\ceps)\vert \Grad \ceps\vert^2 \dx
    + \frac14\intO \intO J_\eps(x-y)\vert \Grad \ceps(x)-\Grad \ceps(y)\vert^2 \dx \dy
    \\
    &\qquad 
    = \intO \Grad\mueps\cdot\Grad\ceps \dx
    +C\norm{\ceps}^2. 
    \label{muu2}
    \end{aligned}
\end{align}
provided that $\eps\in (0,\eps_w]$.
Recalling \eqref{basic2} and that $F''\ge \theta$, we use Young's inequality to infer from \eqref{muu2} that
\begin{align}
    \begin{aligned}
    \theta\norm{\Grad \ceps}^2
    &\leq \intO F''(\ceps)\vert \Grad \ceps\vert^2 \dx 
    +\frac14\intO \intO J_\eps(x-y)\vert \Grad \ceps(x)-\Grad \ceps(y)\vert^2 \dx \dy 
    \\[1ex]
    &\leq C(1+\Vert \Grad\mueps\Vert^2) +\frac\theta2\norm{\Grad \ceps}^2,
    \end{aligned}
    \label{essen}
\end{align}
if $\eps\in (0,\eps_w]$.
In combination with \eqref{basic2}, we thus conclude
 \begin{align}
    \label{EST:CEPS:T}
     \norm{\ceps}_{L^2(0,T;H^1(\Om))}\leq C \quad\text{if $\Omega=\TTn$ and $\eps\in (0,\eps_w]$.}
 \end{align}
 This means that the uniform estimate \eqref{regg-weak-est2} is established and thus, property \ref{E3} is verified.
 
\subsection{Uniform estimates for strong solutions}\label{SUB:EST:STRONG}

Next, we intend to verify item \ref{K3} of Theorem~\ref{THM:WP:NONLOCAL}.
Therefore, we fix an arbitrary $\eps\in(0,\eps_w]$, and we consider the corresponding right-maximal strong solution solution $(\vveps,\peps,\ceps,\mueps)$ to \eqref{ModelH}.
Again, all the following computations can be carried out rigorously within the semi-Galerkin scheme mentioned in Subsection~\ref{SUB:EX} as the associated approximate solutions are sufficiently regular. Eventually, by passing to the limit in the approximation parameter, the obtained uniform bounds hold true for the considered strong solution $(\vveps,\peps,\ceps,\mueps)$.

\paragraph{Step 1: Uniform estimates for $\vveps$ and $\peps$.}
Our first step is to derive higher order bounds on the velocity field $\vveps$, which are uniform with respect to $\eps$. In the case $\Omega=\TTn$, it is well known that testing the momentum equation \eqref{ModelH:NS} by $-\Delta\vveps$ (instead of $A_S\vveps$) is sufficient to bound $\vveps$ in the $\HH^2(\Om)$-norm (see, e.g., \cite{Temam}). However, as we also want to cover the case of $\Omega$ being a bounded domain, we use a more general approach and test the momentum equation by $A_S\vveps=-\mathbf{P}_\sigma\Delta\vveps$. 
Performing this testing procedure and employing the well-known identity
\begin{equation*}
	\frac{1}{2}\|D\vveps(t)\|^2 = \frac{1}{2}\|D\vv_{\eps,0}\|^2 + \int_0^t \intO \delt \vveps\cdot A_S\vveps\dx\ds 
 \end{equation*}
for almost all $t\in [0,T_{\eps,\ast})$, we obtain
\begin{align}
    \begin{aligned}
    \frac{1}{2}\|D\vveps(t)\|^2 
    &=
    \frac{1}{2}\|D\vv_{\eps,0}\|^2
    + \int_0^t\intO \mueps\Grad \ceps\cdot A_S\vveps\dx\ds
    \\
    &\qquad - \int_0^t\intO (\vveps\cdot\Grad)\vveps\cdot A_S\vveps\dx \ds 
    - \int_0^t\intO  \vert D\vveps\vert^2 \dx\ds  .
    \end{aligned}
    \label{Asv}
\end{align}
for almost all $t\in [0,T_{\eps,\ast})$.

Performing an integration by parts, and applying H\"older's inequality, Young's inequality and \eqref{basic2}, we get
\begin{align}
\label{e2}
\Big|\intO \mueps\Grad \ceps\cdot A_S\vveps\dx\Big| &= 
	\Big|\intO \Grad\mueps \ceps\cdot A_S\vveps\dx\Big| 
	\leq C\|\Grad\mueps\|^2 +  \frac{1}{4}\|A_S\vveps\|^2.
\end{align}
If $n=2$, we use H\"older's inequality, Young's inequality, the Gagliardo--Nirenberg inequality as well as \eqref{basic2} to deduce
\begin{align}
\begin{aligned}
 &\Big|\intO (\vveps\cdot\Grad)\vveps\cdot A_S\vveps\dx\Big| \leq C\|\vveps\|_{\LL^4(\Om)}\|D\vveps\|_{\LL^4(\Om)}\|A_S\vveps\| 
 \\
 &\quad \leq C\norm{\vveps}^\frac12\norm{D\vveps}\norm{A_S\vveps}^\frac32
 \leq \frac{1}{4}\|A_S\vveps\| + C\|D\vveps\|^4.
\end{aligned}
 \label{e1:2}
\end{align}
In fact, if $\Omega=\TTn$, the above integral even vanishes (see, e.g., \cite[Lemma 3.1]{Temam}). 
In the case $n=3$, we proceed similarly to derive the estimate
\begin{align}
\begin{aligned}
&\Big|\intO (\vveps\cdot\Grad)\vveps\cdot A_S\vveps\dx\Big| \leq C\|\vveps\|_{\LL^6(\Om)}\|D\vveps\|_{\LL^3(\Omega)}\|A_S\vveps\| 
\\
&\quad \leq C\norm{D\vveps}^\frac32\norm{A_S\vveps}^\frac32
 \leq \frac{1}{4}\|A_S\vveps\|^2 + C\|D\vveps\|^6. \label{e1:3}
 \end{aligned}
\end{align}

Furthermore, testing the momentum equation by $\delt \vveps$, we derive the identity
\begin{align}
    \begin{aligned}
	\int_0^t \norm{\delt \vveps}^2 \ds
    &= - \int_0^t\intO (\vveps\cdot\Grad)\vveps\cdot \delt \vveps\dx\ds 
        + 2 \int_0^t\intO  \Lap \vveps \delt \vveps \dx\ds
    \\
    &\qquad 
        + \int_0^t\intO \mueps\Grad \ceps\cdot \delt \vveps\dx\ds.
    \end{aligned}
 \label{dtv}
\end{align}
for almost all $t\in [0,T_{\eps,\ast})$.
Using Young's inequality as well as integration by parts, we obtain
\begin{align}
    \label{e3}
    \intO \Delta\vveps\cdot \delt \vveps \dx 
    &\leq C_0\norm{A_S\vveps}^2+\frac{1}{4}\norm{\delt \vveps}^2,
    \\
    \label{e5}
    \intO \mueps\Grad \ceps\cdot \delt \vveps\dx
    = - \intO \ceps \Grad \mueps \cdot \delt \vveps\dx
    &\leq C\norm{\Grad\mueps}^2+\frac{1}{4}\norm{\delt \vveps}^2.
\end{align}
for some positive constant $C_0$ depending on the same quantities as the constants denoted by $C$. 
Now, proceeding similarly as in the derivation of \eqref{e1:2} and \eqref{e1:3}, we deduce
\begin{align}
    \label{e4}
   \left \vert \intO (\vveps\cdot\Grad)\vveps\cdot \delt \vveps\dx \right\vert 
   \leq \frac{1}{4}\|\delt \vveps\|^2 +C_0 \norm{A_S\vveps}^2+C\|D\vveps\|^\gamma,
\end{align}
where $\gamma=4$ if $n=2$ and $\gamma=6$ if $n=3$.
We now add inequality \eqref{Asv} and inequality \eqref{dtv} multiplied by $\frac{1}{8C_0}$. By means of \eqref{basic3},  \eqref{e1:2}--\eqref{e2} and \eqref{e3}--\eqref{e5}, we infer
\begin{align*}
	&\frac{1}{2}\|D\vveps(t)\|^2 
        +\frac{1}{16C_0} \int_0^t\norm{\delt \vveps}^2 \ds
        +\frac{1}{16} \int_0^t\|A_S\vveps\|^2 \ds
    \\
    &\quad \leq C + \frac{1}{2}\|D\vv_{\eps,0}\|^2
        + C \int_0^t \|D\vveps\|^\gamma \ds,
\end{align*}
for almost all $t\in [0,T_{\eps,\ast})$ with $\gamma$ as introduced above.
We now recall \eqref{basic2}, the uniform estimates stated in \ref{E3} as well as assumption \eqref{initiald:1}.
Applying Bihari's inequality (see, e.g., \cite[Lemma~II.4.12]{Boyer}), we conclude that the estimate
\begin{align}
\norm{\vveps}_{L^\infty(0,T;\Vsigma)}+\norm{\vveps}_{L^2(0,T;\HH^2(\Om))}+\norm{\delt \vveps}_{L^2(0,T;\LL^2_\sigma(\Om))}\leq C(T)
\label{d1}
\end{align}
holds for all $T\in (0,T_{\eps,\ast})$. In the case $n=3$, due to the uniform bound assumed in 
\eqref{initiald:1}, Bihari's inequality (as stated in \cite[Lemma~II.4.12]{Boyer}) implies the existence of a time $T_\ast\in(0,T_{\eps,\ast})$, which is independent of $\eps$, such that \eqref{d1} holds true for all $T\in (0,T_\ast]$. By a comparison argument, we eventually obtain
\begin{align}
    \label{d2}
    \norm{\peps}_{L^2(0,T;H^1(\Omega))} \le C(T)
\end{align}
for any $T\in (0,\infty)$ if $n=2$ (since then $T_{\eps,*}=\infty$) and any $T\in (0,T_*]$ if $n=3$.

\paragraph{Step 2: Uniform estimates for $\ceps$ and $\mueps$.}
In the following, let $T\in (0,\infty)$ if $n=2$ and $T\in (0,T_*]$ if $n=3$ be arbitrary.
The next goal is to derive further uniform bounds on $\ceps$ and $\mueps$. 
Therefore, in order to obtain the desired estimates, we need to truncate the initial datum $c_{\eps,0}$ as it was done in \cite{PS}. For any $k\in\N$, we define the Lipschitz continuous truncation
\begin{equation*}
    \sigma_k: \R\to\R, \quad s\mapsto
    \begin{cases}
        -1 + \tfrac 1k &\text{if $s< -1 + \tfrac 1k$}, \\
        s &\text{if $ -1 + \tfrac 1k < s < 1 - \tfrac 1k$}, \\
        1 - \tfrac 1k &\text{if $ s > 1 - \tfrac 1k$},
    \end{cases}
\end{equation*}
and we set $c_{\eps,0}^k := \sigma_k \circ c_{\eps,0}$ and $\mu_{\eps,0}^k := \mathcal{L}_\eps c_{\eps,0}^k + f'(c_{\eps,0}^k)$. 
The strong solution corresponding to initial datum $(\vv_{\eps,0},c_{\eps,0}^k)$ will be denoted as $(\vvepsk,\pepsk,\cepsk,\muepsk)$. 
We point out that the estimates \eqref{basic}, \eqref{basic2}, \eqref{basic3}, \eqref{dtc}, \eqref{dv}, \eqref{d1} and \eqref{d2} remain valid for the solution $(\vvepsk,\pepsk,\cepsk,\muepsk)$ and are uniform with respect to $k$ as long as $k$ is sufficiently large. 
Indeed, for the $\eps\in(0,\eps_s]$ chosen above, there exists $k_0=k_0(\eps)$ such that we have ${E}_\eps(\vv_{\eps,0},c_{\eps,0}^k)\leq {E}_\eps(\vv_{\eps,0},c_{\eps,0})+C$ for all $k\ge k_0$, and clearly the initial datum $\vv_{\eps,0}$ does not depend on $k$. In the following, we thus consider $k\geq k_0(\eps)$.

Due to the assumptions in Theorem~\ref{THM:WP:NONLOCAL}, we clearly have $\mu_{\eps,0}^k \in H^1(\Omega)$.
Moreover, as shown in \cite[Formula~(3.9)]{PS}, the function $\muepsk$ has the additional regularity
\begin{equation}
    \label{REG:MUEPSK}
    \muepsk \in C([0,T];H^1(\Omega)).
\end{equation}
In view of the regularities stated in \eqref{regg}, we further have
\begin{equation}
    \label{REG:CEPSK}
    \cepsk \in C_\mathrm{w}([0,T];H^1(\Omega))
\end{equation}
thanks to an embedding result, which can be found, e.g., in \cite[Corollary~2.1]{Strauss}.

Arguing as in \cite[Proof of Theorem~4.1]{AGGP} ($n=2$) or \cite[Proof of Theorem~2.2]{PS} ($n=3$), we derive the identity
\begin{align}
    \label{EQ:DTGMU}
    \begin{aligned}
	&\frac{1}{2}\|\Grad\muepsk(t)\|^2 
        + \int_0^t\intO \vvepsk\cdot\Grad \cepsk \, \delt \muepsk\dx\ds
    \\
    &\qquad 
        + \int_0^t\intO \mathcal{L}_\eps\delt \cepsk \, \delt \cepsk\dx\ds
        + \int_0^t\intO F^{\prime\prime}(\cepsk)|\delt \cepsk|^2\dx\ds 
    \\
    &\quad
    = \frac{1}{2}\|\Grad\mu_{\eps,0}^k\|^2 + \int_0^t\intO \theta_0\vert \delt \cepsk \vert^2\dx\ds 
    \end{aligned}
\end{align}
for almost all $t\in [0,T]$.
Formally, \eqref{EQ:DTGMU} can be obtained as follows:
We differentiate \eqref{ModelH:CH2} with respect to time, which yields
\begin{align}
    \label{EQ:DTMU}
    \delt \muepsk = \mathcal{L}_\eps \delt\cepsk + f''(\cepsk)\delt\cepsk
\end{align}
Now, we test \eqref{ModelH:CH1} by $-\delt\muepsk$ and \eqref{EQ:DTMU} by $\delt\cepsk$. Adding and integrating the resulting equations with respect to time from $0$ to $t$, and using the identity
\begin{align*}
    \frac{1}{2}\|\Grad\muepsk(t)\|^2 
    &= \frac{1}{2}\|\Grad\mu_{\eps,0}^k\|^2 - \int_0^t\intO \delt\muepsk\, \Lap\muepsk \dx\ds
    \\
    &= \frac{1}{2}\|\Grad\mu_{\eps,0}^k\|^2 - \int_0^t\intO \delt\cepsk\, \delt\muepsk \dx\ds,
\end{align*}
we arrive at \eqref{EQ:DTGMU}.

By a straightforward computation, the second summand on the left-hand side of \eqref{EQ:DTGMU} can be reformulated as
\begin{align}
    \label{EQ:VCM}
    \begin{aligned}
	&\int_0^t\intO \vvepsk\cdot\Grad \cepsk\delt \muepsk\dx\ds 
    \\
    &\quad 
    = \intO \vvepsk(t)\cdot\Grad \cepsk(t)\muepsk(t)\dx 
        - \intO \vv_{\eps,0}^k \cdot\Grad c_{\eps,0}^k \mu_{\eps,0}^k \dx
    \\
    &\qquad 
        - \int_0^t \intO \delt \vvepsk\cdot\Grad \cepsk\muepsk\dx\ds
        + \int_0^t \intO \vvepsk\delt \cepsk\cdot\Grad\muepsk\dx\ds
    \end{aligned}
\end{align}
for almost all $t\in [0,T]$.
Due to the properties of $\mathcal{L}_\eps$, we further have
\begin{equation}
    \label{EST:LDTC}
    \intO \mathcal{L}_\eps\delt \cepsk\; \delt \cepsk\dx
    = 2 \mathcal{E}_\eps(\delt \cepsk)
\end{equation}
a.e.~in $[0,T]$.
We now introduce the function
\begin{align*}
	H_\eps^k:\R\to\R,\quad t\mapsto \frac{1}{2}\|\Grad\muepsk(t)\|^2 - \intO \vvepsk(t) \cdot\Grad \muepsk(t)\, \cepsk(t) \dx.
\end{align*} 
Due to the regularities \eqref{regg}, \eqref{REG:MUEPSK} and \eqref{REG:CEPSK}, we know  that $H_\eps^k$ is continuous and it thus holds
\begin{equation}
    \label{EQ:HE:0}
    \begin{aligned}
	H_\eps^k(0) 
    &= \frac{1}{2}\|\Grad\mu_{\eps,0}^k\|^2 - \intO \vv_{\eps,0} \cdot\Grad \mu_{\eps,0}^k \,c_{\eps,0}^k \dx.
    \end{aligned}
\end{equation} 
Recalling $F''\ge \theta$, combining \eqref{EQ:DTGMU}, \eqref{EQ:VCM} and \eqref{EST:LDTC}, and using integration by parts, we conclude
\begin{align}\label{sing1}
    \begin{aligned}
	&H_\eps^k(t)  
        + \int_0^t \intO \theta |\delt\cepsk|^2 \dx\ds
        + 2\int_0^t \mathcal{E}_\eps(\delt \cepsk)\ds
    \\[1ex]
    &\quad \le H_\eps^k(0)
        + \int_0^t \intO \delt \vvepsk\cdot\Grad \cepsk\muepsk \dx\ds
    \\
    &\qquad
        - \int_0^t \intO \vvepsk\delt \cepsk\cdot\Grad\muepsk\dx\ds
        + \int_0^t \intO \theta_0\vert \delt \cepsk\vert^2\dx\ds .
    \end{aligned}
\end{align}
for almost all $t\in[0,T]$.
Exploiting \eqref{basic2} and using integration by parts as well as Young's inequality, we obtain
\begin{align}
    \label{EST:HE:1}
    &\left|\intO \delt \vvepsk\cdot\Grad \cepsk\muepsk\dx\right| 
    = \left|\intO \delt \vvepsk\cdot \cepsk\Grad\muepsk\dx\right|
    \leq C\norm{\Grad\muepsk}^2+C\norm{\delt \vvepsk}^2.
\end{align}
Moreover, invoking the continuous embedding $\HH^2(\Om)\hookrightarrow\LL^\infty(\Om)$ and Young's inequality, we deduce
\begin{align}
    \label{EST:HE:2}
    \begin{aligned}
    \left|\intO \vvepsk\cdot\delt \cepsk\Grad\muepsk\dx\right| 
    &\leq C\norm{\vvepsk}_{\LL^\infty(\Om)}\norm{\delt \cepsk}\norm{\Grad\muepsk}
    \\
    &\leq \theta_0\|\delt \cepsk\|^2 + C\|\vvepsk\|_{\HH^2(\Om)}^2\|\Grad\muepsk\|^2.
    \end{aligned}
\end{align}
% To this aim, by means of \eqref{Poinc3} with $\gamma=\frac1{4\theta_0}$, for all $\eps>0$ sufficiently small (say for any $\eps<\eps_1$, with $\eps_0$ depending only on $\theta_0$), we have
Applying Lemma~\ref{LEM:EN}\ref{EN:IEQ:2} with $\gamma=\frac{1}{2\theta_0}$, we find $\eps_s=\eps_s(\theta_0) > 0$ such that
\begin{align}
    2\theta_0\norm{\delt \cepsk}^2
    \leq  \mathcal{E}_\eps(\delt \cepsk) 
        + C\norm{\delt \cepsk}_{*}^2
\label{poinn}
\end{align}
if $\eps\in (0,\eps_s]$. Without loss of generality, we assume $\eps_s\le \eps_w$ and from now on, we further demand that $\eps \in (0,\eps_s]$.

Combining the inequalities \eqref{sing1}, \eqref{EST:HE:1}, \eqref{EST:HE:2} and \eqref{poinn}, we conclude that the estimate
\begin{equation*}
	\begin{aligned}
	&H_\eps^k(t)  
        + \theta \int_0^t \norm{\delt\cepsk}^2 \ds
        + \int_0^t \mathcal{E}_\eps(\delt \cepsk) \ds
    \\[1ex]
    &\quad \le H_\eps^k(0)
        + C \int_0^t \|\delt \vvepsk\|^2 \ds
        + C \int_0^t \norm{\delt \cepsk}_{*}^2 \ds
    \\
    &\qquad
        + C \int_0^t \Big(1 + \|\vvepsk\|_{\HH^2(\Om)}^2\Big) \|\Grad\muepsk\|^2 \ds.
    \end{aligned}
\end{equation*}
holds for almost all $t\in[0,T]$.
Note that, due to \eqref{basic2}, we have 
\begin{align}\label{sing2}
\left|\intO \vvepsk\cdot \cepsk\Grad\muepsk\dx\right| \leq \norm{\vvepsk}\norm{\Grad\muepsk}\leq C + \frac14\|\Grad\muepsk\|^2.
\end{align}
Hence, there exist positive constants $K$ and $\tilde K$ that may depend on the same quantities as $C$ such that
\begin{align*}
	\frac{1}{4}\|\Grad\muepsk(t)\|^2 - \tilde{K}
    \leq H_\eps^k(t)
    \leq  \|\Grad\muepsk(t)\|^2 + K 
    \quad\text{for almost all $t\in [0,T]$}.
\end{align*}
This allows us to apply Gronwall's lemma, which yields
\begin{align}
    \begin{aligned}
    &H_\eps^k(t) + \theta \int_{0}^{t}\|\delt  \cepsk(s)\|_{L^2(\Om)}^2 \ds
	\\
    &\leq \left(H_\eps^k(0) + C\int_{0}^t(\|\delt \vvepsk(s)\|+\norm{\delt \cepsk}_{*}^2)\ds\right)\exp\left(\int_{0}^{t}C\big(1 + \|\vvepsk(s)\|_{\HH^2(\Om)}^2\big)\ds \right) 
    \\[1ex]
    &\leq C(T)\big(1+H_\eps^k(0)\big)
    \end{aligned}
 \label{gronwall1}
\end{align}
for almost all $t\in [0,T]$, thanks to \eqref{basic2}, \eqref{dtc} and \eqref{d1}.
It thus remains to control $H_\eps^k(0)$ uniformly with respect to $\eps$. 
Recalling the representation \eqref{EQ:HE:0} as well as the assumptions on  $c_{\eps,0}^k$, we deduce
\begin{align}
\label{convergencemu}
	|H_\eps^k(0)| 
    \le \norm{\Grad\mu_{\eps,0}^k}^2 
        + \norm{\vv_{\eps,0}} \norm{\Grad\mu_{\eps,0}^k} \norm{c_{\eps,0}^k}_{L^\infty(\Omega)}\leq \norm{\Grad\mu_{\eps,0}^k}^2 
        + \norm{\vv_{\eps,0}} \norm{\Grad\mu_{\eps,0}^k},
\end{align}
recalling $\vert c_{\eps,0}^k\vert<1$. Now, following \cite{PS,AGGP}, we can prove that $\norm{\nabla\mu_{\eps,0}^k}\to \norm{\nabla{\mu}_{\eps,0}}$ as $k\to \infty$. Therefore, for the $\eps\in(0,\eps_s]$ that was chosen above, there exists $\overline{k}=\overline{k}(\eps)\geq k_0(\eps)$ such that 
\begin{equation*}
    \norm{\nabla{\mu}_{\eps,0}^k-\nabla{\mu}_{\eps,0}}\leq \frac{1}{2} \quad \text{for all $k\geq \overline{k}(\eps)$}.
\end{equation*}
Hence, from \eqref{convergencemu} and the assumptions on $\mu_{\eps,0}$ and $\vv_{\eps,0}$, we conclude
\begin{align*}
   \vert H_\eps^k(0)\vert
   &\leq 2{\norm{\nabla{\mu}_{\eps,0}^k-\nabla{\mu}_{\eps,0}}}^2+2\norm{\nabla{\mu}_{\eps,0}}^2
   \\
   &\quad +C\norm{D\vv_{\eps,0}}({\norm{\nabla{\mu}_{\eps,0}^k-\nabla{\mu}_{\eps,0}}}+\norm{\nabla{\mu}_{\eps,0}})
   \\
   &\leq C,
\end{align*}
for any $k\geq \overline{k}(\eps)$. 
Consequently, for every $k\geq \overline{k}(\eps)$, \eqref{gronwall1} provides the bound
\begin{equation}
    \label{EST:GMUEPS:K}
    \norm{\Grad\muepsk}_{L^\infty(0,T;\LL^2(\Omega))} + \norm{\delt\cepsk}_{L^2(0,T;L^2(\Omega))} \le C(T).
\end{equation}
As the estimates \eqref{basic2}, \eqref{basic3}, \eqref{dtc}, \eqref{dv}, \eqref{d1} and \eqref{d2} remain valid for the solution $(\vvepsk,\pepsk,\cepsk,\muepsk)$ and are uniform with respect to $k$, it follows by standard compactness arguments that $(\vvepsk,\pepsk,\cepsk,\muepsk)$ converges to $(\vveps,\peps,\ceps,\mueps)$, as $k\to\infty$, in the corresponding function spaces. For more details, we also refer to \cite[Proof of Theorem~2.2]{PS}.
In particular, we conclude that the strong solution $(\vveps,\peps,\ceps,\mueps)$ satisfies the uniform bound
\begin{equation}
    \label{EST:GMUEPS}
    \norm{\Grad\mueps}_{L^\infty(0,T;\LL^2(\Omega))} + \norm{\delt\ceps}_{L^2(0,T;L^2(\Omega))} \le C(T).
\end{equation}
Using Poincar\'e's inequality along with \eqref{overcontrol}, we further obtain 
\begin{equation}
    \label{EST:MUEPS}
    \norm{\mueps}_{L^\infty(0,T;H^1(\Omega))} \le C(T).
\end{equation}

In the case $\Omega=\TTn$, it further follows from \eqref{essen} that
\begin{equation}
    \label{EST:CEPS:T2}
    \norm{\ceps}_{L^{\infty}(0,T;H^1(\Omega))} 
    \le C(T).
\end{equation}
By comparison in \eqref{ModelH:CH1}, we now use the uniform estimates \eqref{d1} and \eqref{EST:CEPS:T2} to deduce
\begin{align*}
    &\norm{\Lap\mueps}_{L^2(0,T;L^2(\Omega))} 
    \\
    &\quad\le \norm{\delt\ceps}_{L^2(0,T;L^2(\Omega))}
        + \norm{\vveps}_{L^2(0,T;\LL^\infty(\Omega))}\, \norm{\Grad\ceps}_{L^\infty(0,T;\LL^2(\Omega))}
    \le C(T).
\end{align*}
As $\Omega=\TTn$, we have $\norm{D^2 \mu_\eps}^2 = \norm{\Lap \mu_\eps}^2$ a.e.~in $[0,T]$.
Hence, in combination with \eqref{EST:MUEPS}, we conclude the uniform bound
\begin{equation}
    \label{L2dtc}
    \norm{\mueps}_{L^2(0,T;H^2(\Omega))}
    \le C(T).
\end{equation}

\paragraph{Step 3: A uniform estimate for $F'(\ceps)$.}

In the following, let $p\in[2,\infty)$ if $n=2$ and let $p\in [2,6]$ if $n=3$.
As a consequence of \eqref{EST:MUEPS}, we obtain the estimate
\begin{align}
    \norm{\mueps}_{L^\infty(0,T;L^p(\Om))}
    \leq C(T)\sqrt{p}.
    \label{muessential}
\end{align}
In the case $n=3$, this inequality simply follows from the continuous embedding $H^1(\Omega)\emb L^p(\Omega)$ and the fact that $\sqrt{p} \ge 1$.
In the case $n=2$, \eqref{L2dtc} follows from the following Sobolev type inequality, which can be found, e.g., in \cite[p.~479]{Trudinger}: there exists a constant $C_\Omega>0$ depending only on $\Omega$ such that for all $u\in H^1(\Omega)$ and all $p\in[2,\infty)$, it holds
\begin{align*}
	\|u\|_{L^p(\Omega)} \leq C_\Omega \sqrt{p}\, \|u\|_{H^1(\Omega)}.
\end{align*}

We now intend to derive a uniform bound on $F^\prime(\ceps)$ in $L^\infty(0,T;L^p(\Omega))$.
Therefore, we test equation \eqref{ModelH:CH2} by $|F^\prime(\ceps)|^{p-2}F^\prime(\ceps)$. If $p=2$, this test function is simply to be interpreted as $F^\prime(\ceps)$. We obtain
\begin{align*}
	\intO \mueps|F^\prime(\ceps)|^{p-2}F^\prime(\ceps)\dx &= \intO \mathcal{L}_\eps \ceps|F^\prime(\ceps)|^{p-2}F^\prime(\ceps)\dx + \|F^\prime(\ceps)\|_{L^p(\Om)}^p 
    \\
	&\quad-  \theta_0\intO \ceps|F^\prime(\ceps)|^{p-2}F^\prime(\ceps)\dx.
\end{align*}
Using H\"older's and Young's inequalities, and recalling that $\vert\ceps\vert<1$ a.e.~in $\Omega_T$, we observe
\begin{align*}
	\intO \mueps|F^\prime(\ceps)|^{p-2}F^\prime(\ceps)\dx 
    &\leq C\|\mueps\|_{L^p(\Om)}^p 
        + \frac14\|F^\prime(\ceps)\|_{L^p(\Om)}^p, 
    \\
	\theta_0\intO \ceps|F^\prime(\ceps)|^{p-2}F^\prime(\ceps)\dx 
    &\leq  C|\Omega|^\frac1p
        + \frac14\|F^\prime(\ceps)\|_{L^p(\Om)}^p\leq C+\frac14\|F^\prime(\ceps)\|_{L^p(\Om)}^p.
\end{align*}
% To estimate the remaining term, we consider the auxiliary function 
% \begin{equation*}
%     g(r) := |F^\prime(r)|^{p-2}F^\prime(r) \quad\text{for $r\in (-1,1)$}.
% \end{equation*}
Since $F'$ is strictly increasing, so is $g(r) := |F^\prime(r)|^{p-2}F^\prime(r)$ for $r\in (-1,1)$. This implies
\begin{equation*}
    \big(\ceps(x) - \ceps(y)\big)\big[g(\ceps(x)) - g(\ceps(y))\big] \ge 0
    \quad\text{for almost all $x,y\in\Omega$.}
\end{equation*}
Consequently, since $J_\eps \ge 0$, we have
\begin{align*}
	&\intO \mathcal{L}_\eps  \ceps|F^\prime(\ceps)|^{p-2}F^\prime(\ceps)\dx \\
	&= \frac12\intO \intO J_\eps(x-y)\, \big(\ceps(x) - \ceps(y)\big)\big[g(\ceps(x)) - g(\ceps(y))\big]\dy\dx 
    \ge 0.
\end{align*}
Altogether, this implies
\begin{align*}
	\|F^\prime(\ceps)\|_{L^p(\Om)} \leq C\big(1+ \|\mueps\|_{L^p(\Om)}\big).
\end{align*}
Hence, in combination with \eqref{muessential}, we eventually conclude
\begin{align}
\norm{F^\prime(\ceps)}_{L^\infty(0,T;L^p(\Om))}\leq C\sqrt{p}.
    \label{FLp}
\end{align}
Having all these uniform estimates at hand, item \ref{K3} is now verified.

\subsection{Strict separation property}
The last step is to verify the strict separation properties stated in item~\ref{K4}.

In the case $n=2$, it has already been proven in \cite[Theorem 1.4]{AGGP} that 
\begin{align}
\norm{F'(\ceps)}_{L^\infty(0,\infty;L^p(\Om))}\leq C_\eps \sqrt{p},\quad \text{for all } p\in[2,\infty),
\label{Lpinfty}
\end{align}
where $C_\eps$ is a constant that may depend on the usual quantities as well as on $\eps$.
Therefore, one can proceed as in the the proof of \cite[Theorem 4.3]{GP} to conclude that \eqref{delt} holds.
Assuming that the initial datum is strictly separated, we can repeat the same argument as in \cite[Corollary 4.5]{P} (i.e. the De Giorgi iteration scheme without the use of a cutoff function in time) to show that there exists $T_S>0$ such that the solution is strictly separated on $[0,T_S]$. Combined with \eqref{delt}, the result \eqref{SP-Star} is verified. We point out that the dependence of $\delta_\eps^*$ on $\eps$ is not only due to the constant $C_\eps$ in estimate \eqref{Lpinfty} (which could be avoided if we restrict ourselves to finite time intervals, see~\eqref{FLp}), but also results from the fact that the $W^{1,1}(X)$-norm of $J_\eps$ is not bounded uniformly with respect to $\eps$.

In the case $n=3$, thanks to estimate \eqref{FLp}, one can argue exactly as in \cite[Theorem 4.3]{P} (see also \cite[Remarks 4.7, 4.9]{P}) to prove the validity of \eqref{delt} and \eqref{SP-Star}. As in the two-dimensional setting, the dependence of $\delta_\eps^*$ on $\eps$ cannot be avoided using this method.

We remark that in the aforementioned proofs, the presence of the additional convective term $\vveps\cdot \Grad \ceps$ in \eqref{ModelH:CH1} does not disturb the line of argument, since in the De Giorgi iteration scheme this term simply vanishes as the velocity field is divergence-free and vanishes at the boundary if $\Omega$ is a bounded domain. For more details, we refer to \cite[Remark~4.7]{P}.

In summary, all statements of Theorem~\ref{THM:WP:NONLOCAL} are now established, and thus the proof is complete.
\hfill$\Box$

% \newpage

\section{Proof of Theorem \ref{THM:NLTL}}
\label{PROOF:NLTL}

Let $\eps_s>0$ be the real number introduced in Theorem~\ref{THM:WP:NONLOCAL}.
For any $\eps\in (0,\eps_s]$, let $(\vveps, \peps, \ceps, \mueps)$ be the unique right-maximal strong solution to the nonlocal Model~H (i.e., \eqref{ModelH} with $\eps\in (0,\eps_s]$) given by Theorem \ref{THM:WP:NONLOCAL}. Moreover, let 
$(\vv,p,c,\mu)$ denote the unique strong solution to the local Model~H (i.e., \eqref{ModelH} with $\eps=0$) given by Proposition \ref{PROP:WP:LOCAL}.

Note that the definition of $T_\diamond>0$ ensures that the strong solutions 
$(\vveps, \peps, \ceps, \mueps)$ with $\eps\in (0,\eps_s]$ and the strong solution $(\vv,p,c,\mu)$ exist on the time interval $[0,T_\diamond)$.
In particular, the strong solution $(\vv,p,c,\mu)$ fulfills the strict separation property stated in Proposition~\ref{PROP:WP:LOCAL}\ref{K3loc}, 
and for any $\eps\in (0,\eps_s]$, the strong solution $(\vveps, \peps, \ceps, \mueps)$ fulfills the uniform estimates stated in Theorem~\ref{THM:NLTL}\ref{E3} and \ref{K3}.

From now on, in order to verify the convergence property \eqref{finale}, let $T\in (0,T_\diamond)$ and $\eps\in (0,\eps_s]$ be arbitrary. Moreover, we use the notation
\begin{align*}
    (\tilde\vv_{0},\tilde c_{0}) 
    &:= (\vv_{0,\eps},c_{0,\eps}) - (\vv_{0},c_{0}),
    \\
    (\tvv,\tp,\tc,\tmu) 
    &:= (\vveps,\peps,\ceps,\mueps) - (\vv,p,c,\mu).
\end{align*}
This means that the quadruplet $(\tvv,\tp,\tc,\tmu)$ fulfills the following system of equations in the strong sense:
\begin{subequations}
\label{C:ModelH}
\begin{align}
    \label{C:ModelH:NS}
	&\delt \tvv + (\vveps \cdot \Grad)\vveps + (\vv \cdot \Grad)\vv - \Lap\tvv + \Grad \tp 
    = \mu_\eps \Grad c_\eps - \mu\Grad c,
    \quad \Div(\tvv) = 0
    &&\text{in}\;\Omega_T,
    \\
    \label{C:ModelH:CH1}
	&\delt \tc + \vveps\cdot\Grad \ceps + \vv\cdot\Grad c = \Delta\tmu
    \qquad&&\text{in}\;\Omega_T, 
    \\
    \label{C:ModelH:CH2}
	&\tmu = \mathcal{L}_\eps \ceps + \Lap c + F^{\prime }(c_{1})-F^{\prime }(c _{2})+\theta_0 \tc
    \qquad&&\text{in}\;\Omega_T,
    \\
    \label{C:ModelH:IC}
    &\tvv\vert_{t=0} = \tvv_0, \quad \tc\vert_{t=0} = \tc_0
    &&\text{in}\; \Omega.
\end{align}
If $\Omega$ is a bounded domain, $(\tvv,\tp,\tc,\tmu)$ also satisfies the boundary conditions
\begin{align}
    \label{C:ModelH:BC}
    \tvv = 0,\quad \deln \tmu =0
    \quad\text{on}\; \Gamma_T.
\end{align}
\end{subequations}

\paragraph{Step 1: An estimate for the difference $f^\prime(\ceps)-f^\prime(c)$.}
We first intend to derive an estimate for the difference $f^\prime(\ceps)-f^\prime(c)$ in the $L^1(\Omega)$-norm. Therefore, we exploit the strict separation property of the solution $(\vv,p,c,\mu)$.
Let $\delta_\star$ be the constant from \eqref{SP-Starloc} and let $\delta_0$ be the constant from \eqref{SP-Starloc2}. In the following, we set $\delta:=\delta_\star/2$ if $n=2$ and $\delta:=\delta_0/4$ if $n=3$. Hence, in view of Proposition~\ref{PROP:WP:LOCAL}\ref{K3loc}, we have,
\begin{align}
    \label{SEP:DELTA}
    \norm{c(t)}_{L^\infty(\Omega)} \le 1 - 2\delta\quad\text{for all $t\in [0,T]$}
\end{align}
due to the choices of $T_\diamond$ and $T$.
For any $t\in[0,T]$, we now define 
\begin{align*}
A_\delta(t)& := \big\{x\in\Omega:\ \vert \ceps(x,t)\vert\geq 1-\delta \big\},
\\
B_\delta(t)&:= \big\{x\in\Omega:\ \vert c(x,t)-\ceps(x,t)\vert \geq \delta \big\}.
\end{align*} 
Exploiting \eqref{SEP:DELTA}, we observe
\begin{align}
 \label{bounded}
   1-\delta
   \leq \vert \ceps(x,t)\vert
   \leq \vert c(x,t)\vert+\vert c(x,t)-\ceps(x,t)\vert 
   \leq 1-2\delta+\vert c(x,t)-\ceps(x,t)\vert
\end{align} 
for all $t\in [0,T]$ and all $x\in A_\delta(t)$.
This entails that
\begin{align}
 \vert c(x,t)-\ceps(x,t)\vert \geq \delta
 \label{bounded2}
\end{align}
for all $t\in [0,T]$ and all $x\in A_\delta(t)$.
Consequently, for every $t\in [0,T]$, we have the inclusion
\begin{equation*}
A_\delta(t)\subset B_\delta(t).
\end{equation*}
Therefore, invoking Chebyshev's inequality, we conclude 
\begin{align}
  &\vert A_\delta(t)\vert
  \leq \int_{B_\delta(t)} 1 \dx  
  \leq \int_{B_\delta(t)} \dfrac{\vert \ceps(t)-c(t)\vert^2}{\delta^2} \dx 
  \leq \intO \dfrac{\vert \ceps(t)-c(t)\vert^2}{\delta^2} \dx 
  \label{chebyshev}
\end{align}
for all $t\in [0,T]$, where $\vert A_\delta(t)\vert$ denotes the $n$-dimensional Lebesgue measure of the set $A_\delta(t)$.
Using the Cauchy--Schwarz inequality as well as the fundamental theorem of calculus, we deduce 
\begin{align}
    \begin{aligned}
    & \norm{f'(\ceps)-f'(c)}_{L^1(\Omega)}
    \\[1ex]
    &\leq \norm{f'(\ceps)-f'(c)}_{L^1(A_\delta)}+\norm{f'(\ceps)-f'(c)}_{L^1(\Omega\setminus A_\delta)}
    \\
    &\leq\norm{f'(\ceps)-f'(c)}_{L^2(A_\delta)}\vert A_\delta\vert^\frac12
        \, +\int_{\Omega\setminus A_\delta}\left\vert\int_0^1 f''\big(s\ceps+(1-s)c\big)(\ceps-c)\ds\right\vert \dx 
    \end{aligned}
    \label{contt}
\end{align}
for all $t\in [0,T]$. By \eqref{SEP:DELTA} and the definition of $A_\delta$, we have 
\begin{equation*}
    |s\ceps(t)+(1-s)c(t)| \le s |\ceps(t)| + (1-s) |c(t)| \le 1 - \delta 
    \quad\text{a.e.~in $\Omega\setminus A_\delta(t)$}
\end{equation*}
for all $t\in [0,T]$ and all $s\in [0,1]$.
Recalling $F''\in C(-1,1)$, we thus have
\begin{align}
    \begin{aligned}
      &\int_0^1 f''\big(s\ceps(t)+(1-s)c(t)\big)(\ceps(t)-c(t)) \ds
      \\
      &\quad \leq \left(\max_{\vert s\vert \leq 1-\delta}F''(s)+\theta_0\right)\vert\ceps(t)-c(t)\vert =: C_\delta  \vert\ceps(t)-c(t)\vert
      \quad\text{a.e.~in $\Omega\setminus A_\delta(t)$}.
    \end{aligned}
  \label{control_below}
\end{align}
Plugging this estimate into \eqref{contt} and using again \eqref{chebyshev}, we deduce 
\begin{align}
    \begin{aligned}
    &\norm{f'(\ceps)-f'(c)}_{L^1(\Omega)}
    \\
    &\leq\norm{f'(\ceps)-f'(c)}_{L^2(A_\delta)}\vert A_\delta\vert^\frac12+\int_{\Omega\setminus A_\delta}\left\vert\int_0^1 f''(s\ceps+(1-s)c)(\ceps-c)\ ds\right\vert\ \dx 
    \\
    &\leq 
    \frac{1}{\delta}\norm{f'(\ceps)-f'(c)}\norm{\ceps-c}+C_\delta\norm{\ceps-c}_{L^1(\Omega)}
    \\
    &\leq 
      \frac{1}{\delta}\big(\norm{f'(\ceps)}+\norm{f'(c)}\big)\norm{\ceps-c}+C_\delta\norm{\ceps-c}_{L^1(\Omega)}
    \\
    &\leq \left(\frac{C}{\delta} + \abs{\Omega}^{\frac{1}{2}} C_\delta\right)\norm{\ceps-c}=:K_\delta\norm{\ceps-c}.
    \end{aligned}
    \label{contt2}
\end{align}
Here, we used that $f'(c) \in L^\infty(0,T;L^2(\Omega))$ (see~\eqref{reggloc}) and that $f'(\ceps)$ is bounded in $L^\infty(0,T;L^2(\Omega))$ uniformly with respect to $\eps$ (see~\eqref{regg}).

\paragraph{Step~2: Estimates for the Navier--Stokes equation.} 
From now on, the letter $C$ denotes generic positive constants that may depend only on the choice of $\Omega$, the number $\delta$ from \eqref{SEP:DELTA}, the initial data and the system parameters, but not on $\eps$.
The exact value of $C$ may vary throughout this proof.

Testing \eqref{C:ModelH:NS} by $A_S^{-1}\tvv$ and invoking the identity 
\begin{align*}
    \frac{1}{2} \ddt\|\tvv\|_{\sigma}^2
     = \frac{1}{2} \ddt \norm{\Grad A_S^{-1}\tvv\big}^2 
	= ( \delt \tvv , A_S^{-1} \tvv ) ,
\end{align*}
we obtain
\begin{align}\label{C:ModelH:NS_1}
    \begin{aligned}
	&\frac{1}{2} \ddt\|\tvv\|_{\sigma}^2 
    +  \intO \big[(\vveps\cdot\Grad)\vveps- (\vv\cdot\Grad)\vv \big]\cdot A_S^{-1}\tvv\dx 
    +  \intO \Grad \tvv:\Grad A_S^{-1}\tvv\dx 
    \\&
	= \intO(\mueps\Grad \ceps - \mu\Grad c)\cdot A_S^{-1}\tvv\dx.
    \end{aligned}
\end{align}
Recalling that $\vv$, $\vveps$ and $\tvv$ are divergence-free, the second term on the left-hand side of \eqref{C:ModelH:NS_1} can be reformulated as
\begin{align}\label{NScalc1}
    \begin{aligned}
	& \intO \big[(\vveps\cdot\Grad)\vveps- (\vv\cdot\Grad)\vv \big]\cdot A_S^{-1}\tvv\dx 
    \\
	&=  \intO(\vveps\cdot\Grad)\tvv\cdot A_S^{-1}\tvv\dx
	   +  \intO(\tvv\cdot\Grad)\vv\cdot A_S^{-1}\tvv\dx
    \\
    &= -  \intO(\vveps\otimes \tvv): \Grad A_S^{-1}\tvv\dx
	   -  \intO(\tvv\otimes \vv): \Grad A_S^{-1}\tvv\dx.
    \end{aligned}
\end{align}
Using the Gagliardo--Nirenberg inequality, Young's inequality and the uniform bound \eqref{regg-est1}, the first term can be estimated as
\begin{align}\label{NScalc2a}
    \begin{aligned}
	&\abs{ \intO(\vveps\otimes \tvv): \Grad A_S^{-1}\tvv\dx }
	\leq \, \|\vveps\|_{\LL^6(\Omega)}\|\Grad A_S^{-1}\tvv\|_{\LL^3(\Omega)}\|\tvv\| 
    \\
	&\quad \leq C\|\vveps\|_{\Vsigma}\|\Grad A_S^{-1}\tvv\|^{\frac12}\norm{\tvv}^\frac32
    \leq \frac{1}{16} \|\tvv\|^2 + C\|\tvv\|_{\sigma}^2.
    \end{aligned}
\end{align}
Proceeding similarly, we deduce
\begin{align}\label{NScalc2b}
    \begin{aligned}
	&\abs{ \intO(\tvv\cdot\Grad)\vv\cdot A_S^{-1}\tvv\dx }
	\leq \, \|\tvv\|\|\vv\|_{\LL^{6}(\Omega)}\|\Grad A_S^{-1}\tvv\|_{\LL^3(\Omega)} 
    \\
	&\quad \leq C \|\tvv\|\norm{\vv}_{\Vsigma}\|\Grad A_S^{-1}\tvv\|^{\frac12}\|\Grad A_S^{-1}\tvv\|_{{\Vsigma}}^{\frac12} 
    \\
	&\quad \leq C\|\tvv\|^{\frac32} \|\tvv\|_{\sigma}^{\frac12} 
        \leq \frac{1}{16} \|\tvv\|^2 + C \|\tvv\|_{\sigma}^2.
 \end{aligned}
\end{align}
Combining \eqref{NScalc1}--\eqref{NScalc2b}, we conclude
\begin{align}\label{NScalc3}
	 \abs{\intO \big[(\vveps\cdot\Grad)\vveps- (\vv\cdot\Grad)\vv \big]\cdot A_S^{-1}\tvv\dx}
    \leq\frac{1}{8} \|\tvv\|^2 + C\|\tvv\|_{\sigma}^2.
\end{align}
Using integration by parts and recalling once more that $\tvv$ is divergence-free, we further obtain
\begin{align}\label{NScalc4}
	 \intO \Grad \tvv:\Grad A_S^{-1}\tvv\dx 
    = \|\tvv\|^2.
\end{align}
Via integration by parts, the right-hand side of \eqref{C:ModelH:NS_1} can be reformulated as
\begin{align}\label{NScalc5}
	\intO(\mueps\Grad \ceps - \mu\Grad c)\cdot A_S^{-1}\tvv\dx 
    = - \intO\Grad\mueps \tc\cdot A_S^{-1}\tvv\dx + \intO\tmu\Grad c\cdot A_S^{-1}\tvv\dx.
\end{align} 
Employing the Gagliardo--Nirenberg inequality, Young's inequality and the uniform bound \eqref{regg-est1}, the first term can be estimated as
\begin{align}\label{NScalc6}
    \begin{aligned}
	&\abs{\intO\Grad\mueps \tc\cdot A_S^{-1}\tvv\dx }
    \leq \|\Grad\mueps\|\|\tc\|\|A_S^{-1}\tvv\|_{\LL^\infty(\Omega)} 
    \\
	&\quad\leq C \|\Grad\mueps\| \norm{\tc}\|A_S^{-1}\tvv\|_{\HH^1(\Omega)}^{\frac12} 
        \|A_S^{-1}\tvv\|_{\HH^2(\Omega)}^{\frac12} 
    \leq C \norm{\tc}\|\tvv\|_{\sigma}^{\frac12}\|\tvv\|^{\frac12}
    \\
	&\quad\leq  \frac{\theta_0}{16} \norm{\tc}^2 
        + \frac{1}{8} \|\tvv\|^2
        + C \|\tvv\|_{\sigma}^2
    \\
    &\quad\leq \frac{\theta_0}{8} \norm{\tc - \overline{\tc}}^2 
        + \frac{1}{8} \|\tvv\|^2
        + C|\overline{\tc}|^2 
        + C \|\tvv\|_{\sigma}^2.
    \end{aligned}
\end{align}
We point out that this estimate is one of the main reasons for which the solution $(\vveps,\ceps)$ has to be strong since otherwise we would only have $\mueps\in L^2(0,T;H^1(\Omega))$.
By means of \eqref{C:ModelH:CH2}, the second term in \eqref{NScalc5} can be expanded as
\begin{align}\label{NScalc7}
    \begin{aligned}
	\intO\tmu\Grad c\cdot A_S^{-1}\tvv\dx
    &= \intO\big(\mathcal{L}_\eps c + \Delta c\big)\Grad c\cdot A_S^{-1}\tvv\dx
	+ \intO\mathcal{L}_\eps\tc\:\Grad c\cdot A_S^{-1}\tvv\dx
    \\
	&\quad +\intO\big(f^\prime(\ceps) - f^\prime(c)\big)\Grad c\cdot A_S^{-1}\tvv\dx.
    \end{aligned}
\end{align}
Using Young's inequality, Agmon's inequality and the uniform estimate \eqref{regg-est1}, the first summand on the right-hand side can be estimated as
\begin{align}\label{NScalc8}
    \begin{aligned}
	&\intO\big(\mathcal{L}_\eps c 
        + \Delta c\big)\Grad c\cdot A_S^{-1}\tvv\dx
    \\
    &\quad\leq \frac{1}{2}\|\mathcal{L}_\eps c + \Delta c\|^2 
        + \frac{1}{2}\|\Grad c\|^2\|A_S^{-1}\tvv\|^2_{\LL^\infty(\Omega)} 
    \\
	&\quad\leq \frac{1}{2}\|\mathcal{L}_\eps c + \Delta c\|^2 
        + C\|\Grad c\|^2\|A_S^{-1}\tvv\|_{\HH^1(\Omega)} \|A_S^{-1}\tvv\|_{\HH^2(\Omega)} 
    \\
	&\quad\leq \frac{1}{2}\|\mathcal{L}_\eps c + \Delta c\|^2 + C \|\tvv\|_{\sigma}\|\tvv\|
    \\
    &\quad\leq \frac{1}{2}\|\mathcal{L}_\eps c + \Delta c\|^2 + \frac{1}{8} \|\tvv\|^2 + C\|\tvv\|_{\sigma}^2,
    \end{aligned}
\end{align}
where we exploited \eqref{reggloc} for the $L^\infty(0,T;H^1(\Omega))$-regularity of $\mu$.
To estimate the second summand on the right-hand side of \eqref{NScalc7},  
we use the folowing Poincaré type inequality, which can be found in \cite[Theorem 1]{Ponce}: there exists $C>0$, such that for all $ f\in H^1(\Omega)$,
\begin{align}
\mathcal{E}_\eps(f)\leq C\norm{f}_{H^1(\Omega)}.
    \label{poincare}
\end{align}
We further recall \eqref{regularity_c}, \eqref{regularity_c2} and \eqref{regg-est1}, the definition of $\mathcal{L}_\eps$, and the continuous embeddings
$H^3(\Omega)\emb W^{2,4}(\Omega) \emb W^{1,\infty}(\Omega) \emb W^{1,4}(\Omega)$ and $\HH^1(\Omega)\emb\LL^4(\Omega)$. With the help of these results, we derive the estimate
\begin{align}
    \label{NScalc9}
    \begin{aligned}
	&\intO\mathcal{L}_\eps\tc\:\Grad c\cdot A_S^{-1}\tvv\dx
    \\
    &\quad \leq 2\sqrt{\mathcal{E}_\eps(\tc)}\, \sqrt{\mathcal{E}_\eps(\Grad c\cdot A_S^{-1}\tvv)}
    \leq 
    C\sqrt{\mathcal{E}_\eps(\tc)}\, \norm{\Grad c\cdot A_S^{-1}\tvv}_{H^1(\Omega)} 
    \\
    &\quad\leq C\sqrt{\mathcal{E}_\eps(\tc)}\,
    \Big(\norm{\Grad c}_{\LL^4(\Omega)}\norm{ A_S^{-1}\tvv}_{\LL^4(\Omega)}
        +\norm{D^2 c}_{\LL^4(\Omega)}\norm{A_S^{-1}\tvv}_{\LL^4(\Omega)}
    \\
    &\qquad\qquad\qquad\qquad
        +\norm{\Grad c}_{\LL^\infty(\Omega)}\norm{\Grad A_S^{-1}\tvv}\Big)
    \\
    &\quad\leq C\sqrt{\mathcal{E}_\eps(\tc)}\,
    \norm{c}_{H^3(\Omega)}
    \norm{A_S^{-1}\tvv}_{\HH^1(\Omega)}
    \\
	&\quad\leq {\frac{1}{2}}\mathcal{E}_\eps(\tc) 
        + C\norm{\tvv}_\sigma^2
    = {\frac{1}{2}}\mathcal{E}_\eps\big(\tc- \ov{\tc}\big) 
        + C\norm{\tvv}_\sigma^2,
    \end{aligned}
\end{align}
recalling $\mathcal{E}_\eps(\tc) = \mathcal{E}_\eps\big(\tc-\ov{\tc}\big)$.
Here, the first inequality follows by exploiting the properties of the interaction kernel $J_\eps$ (cf.~\cite[p.\,128]{DST3}).
Invoking \eqref{contt2}, Agmon's inequality and Young's inequality, the third summand on the right-hand side of \eqref{NScalc7} can be bounded via the estimate
\begin{align}
    \begin{aligned}
    &\intO\big(f^\prime(\ceps)-f^\prime(c)\big)\Grad c\cdot A_S^{-1}\tvv\dx 
    \\
    &\quad\leq C\|f^\prime(\ceps)-f^\prime(c)\|_{L^1(\Omega)}\|\Grad c\|_{\LL^\infty(\Omega)}\|\tvv\|_{\sigma}^{\frac12}
        \|\tvv\|^{\frac12}
    \\
    &\quad \leq C K_\delta \norm{\ceps-c}\, 
        \|\Grad A_S^{-1}\tvv\|^{\frac12}\|A_S^{-1}\tvv\|_{\HH^2(\Omega)}^{\frac12}
    \\
    &\quad \leq \frac{1}{8} \norm{\tvv}^2+\frac{\theta_0}{16}\norm{\tc}^2+CK_\delta^4\norm{\tvv}_{\sigma}^2
    \\
    &\quad =\frac{1}{8} \norm{\tvv}^2+\frac{\theta_0}{2}\norm{\tc - \ov{\tc}}^2 +C|\overline{\tc}|^2 
        + C\norm{\tvv}_{\sigma}^2.
    \end{aligned}
    \label{NScalc10}
\end{align}
In view of \eqref{NScalc7},\eqref{NScalc8}, \eqref{NScalc9} and \eqref{NScalc10}, we thus have
\begin{align}
    \label{NScalc11}
    \abs{\intO\tmu\Grad c\cdot A_S^{-1}\tvv\dx} 
    \le {\frac{1}{2}}\mathcal{E}_\eps\big(\tc-\ov{\tc}\big)
        + \frac{1}{4} \norm{\tvv}^2
        + \frac{\theta_0}{2}\norm{\tc - \ov{\tc}}^2 
        + C\norm{\tvv}_{\sigma}^2
        + C|\overline{\tc}|^2 .
\end{align}
Eventually, combining \eqref{C:ModelH:NS_1} with \eqref{NScalc3}, \eqref{NScalc4} and \eqref{NScalc11}, we conclude
\begin{align}
    \label{NScalc12}
    \begin{aligned}
	& \frac{1}{2} \ddt \|\tvv\|_{\sigma}^2 + \frac{3}{4}\|\tvv\|^2
    \\
    &\quad \leq C\big(\|\tvv\|_{\sigma}^2 + \|\tc-\overline{\tc}\|_{\ast}^2 \big) 
        + \frac1{2} \mathcal{E}_\eps\big(\tc- \ov{\tc}\big) 
        + \frac{\theta_0}{2}\norm{\tc-\overline{\tc}}^2 
        + \frac{1}{2}\|\mathcal{L}_\eps c 
        + \Delta c\|^2 
        + C|\overline{\tc}|^2.
    \end{aligned}
\end{align}

\paragraph{Step~3: Estimates for the convective Cahn--Hilliard system.} 

Testing \eqref{C:ModelH:CH1} with $\mathcal{N}(\tc - \overline{\tc})$ and using the identity
\begin{equation*}
    \frac{1}{2}\ddt\|\tc-\overline{\tc}\|_{\ast}^2 
    = \frac{1}{2}\ddt\|\Grad\mathcal{N}\big(\tc-\overline{\tc}\big)\|^2
    = \big( \delt \tc , \mathcal{N}(\tc-\overline{\tc}) \big),
\end{equation*}
we derive the equation
\begin{align}\label{CHcalc1}
	\frac{1}{2}\ddt\|\tc-\overline{\tc}\|_{\ast}^2 
    = -\intO\big(\vveps\cdot\Grad \ceps - \vv\cdot\Grad c\big)
            \mathcal{N}(\tc-\overline{\tc})\dx 
        - \intO\tmu(\tc-\overline{\tc})\dx,
\end{align}
Expressing $\tmu$ via \eqref{C:ModelH:CH2}, the second term on the right-hand side can be reformulated as
\begin{align}\label{CHcalc2}
    \begin{aligned}
	- \intO \tmu(\tc-\overline{\tc})\dx 
    &= - \intO\big(\mathcal{L}_\eps \ceps 
        + \Delta c\big)(\tc-\overline{\tc})\dx 
    \\
    &\qquad - \intO\big(f^\prime(\ceps) - f^\prime(c)\big)(\tc-\overline{\tc})\dx.
    \end{aligned}
\end{align}
Recalling the definition of $f$ and the condition $F''\ge \theta$ 
(see~\ref{ASS:S1}), we use Young's inequality along with \eqref{contt2} to obtain
\begin{align}
    \label{CHcalc3}
    \begin{aligned}
    &- \intO\big(f^\prime(\ceps) - f^\prime(c)\big)(\tc-\overline{\tc})\dx 
    \\
    &= - \intO\big(F^\prime(\ceps) - F^\prime(c)\big)\tc\dx 
        + \theta_0 \norm{\tc}^2
        -  \intO\big(f^\prime(\ceps) - f^\prime(c)\big)\overline{\tc}\dx 
    \\[1ex]
    &\leq - \theta\norm{\tc}^2 
        +  \theta_0\norm{\tc}^2 
        + |\overline{\tc}|\;\|f^\prime(c_\eps) - f^\prime(c)\|_{L^1(\Omega)} 
    \\
    &\leq - \theta\norm{\tc}^2 
        + \frac{9}{8}\theta_0 \norm{\tc- \overline{\tc}}^2 
        + C|\overline{\tc}|^2.
    \end{aligned}
\end{align}
We next use the identity
\begin{align}\label{CHcalc4}
	\intO\mathcal{L}_\eps \tc\, (\tc-\overline{\tc})\dx 
    = 2\mathcal{E}_\eps(\tc -\overline{\tc} ),
\end{align}
which follows by a straightforward computation exploiting the symmetry of the interaction kernel $J_\eps$.
Using this result, we deduce
\begin{align}\label{CHcalc6}
    \begin{aligned}
    - \intO\big(\mathcal{L}_\eps \ceps 
        + \Delta c\big)(\tc-\overline{\tc})\dx 
    &= - \intO\big(\mathcal{L}_\eps c + \Delta c\big)(\tc-\overline{\tc})\dx 
        - \intO\mathcal{L}_\eps \tc(\tc-\overline{\tc})\dx 
    \\
    &\leq \frac{1}{2}\norm{\mathcal{L}_\eps c + \Delta c}^2 
        + \frac{\theta_0}{8}\norm{\tc 
        - \overline{\tc}}^2 
        - 2\mathcal{E}_\eps(\tc-\overline{\tc}).
    \end{aligned}
\end{align}
Combining \eqref{CHcalc2}, \eqref{CHcalc2} and \eqref{CHcalc6}, we have
\begin{align}
    \label{CHcalc7}
    - \intO \tmu(\tc-\overline{\tc})\dx 
    \leq \frac{5}{4}\theta_0 \norm{\tc- \overline{\tc}}^2 
        + \frac{1}{2}\norm{\mathcal{L}_\eps c +\Delta c}^2
        + C|\overline{\tc}|^2
        - 2\mathcal{E}_\eps(\tc-\overline{\tc})
        - \theta\norm{\tc}^2 .
\end{align}
Recalling that $\vveps$ and $\vv$ are divergence-free and using integration by parts, the second summand on the right-hand side of \eqref{CHcalc1} can be expressed as
\begin{align}
    \label{CHcalc8}
    \begin{aligned}
	&\intO\big(\vveps\cdot\Grad \ceps - \vv\cdot\Grad c\big)\mathcal{N}(\tc-\overline{\tc})\dx 
    \\
    &\quad = - \intO\vveps\tc\cdot\Grad\mathcal{N}(\tc-\overline{\tc})\dx 
        - \intO c\:\tvv \cdot\Grad\mathcal{N}(\tc-\overline{\tc})\dx.
    \end{aligned}
\end{align}
Invoking H\"older's inequality, the Gagliardo--Nirengberg inequality, the embedding $\Vsigma\hookrightarrow \LL^4(\Omega)$ and the uniform bound 
\eqref{regg-est1}, the first term on the right-hand side can be estimated as
\begin{align}
    \label{CHcalc9}
    \begin{aligned}
	&\abs{\intO\vveps\tc\cdot\Grad\mathcal{N}(\tc-\overline{\tc})\dx} 
    \leq \|\vveps\|_{\LL^6(\Omega)}\|\tc\|\|\Grad\mathcal{N}(\tc-\overline{\tc})\|_{\LL^3(\Omega)} 
    \\
	&\quad\leq C\|\vveps\|_{\Vsigma} \|\tc-\overline{\tc}\|^{\frac32} 
        \|\tc-\overline{\tc}\|_{\ast}^{\frac12} 
    + C|\overline{\tc}|\,\|\vveps\|_{\Vsigma}\|
    \|\tc-\overline{\tc}\|^{\frac12}\|\tc-\overline{\tc}\|_{\ast}^{\frac12}
    \\
	&\quad\leq \frac{\theta_0}{4}\|\tc-\overline{\tc}\|^2 
    + C\|\tc-\overline{\tc}\|_{\ast}^2 + C|\overline{\tc}|^2. 
    \end{aligned}
\end{align}
Moreover, employing \eqref{regg-est1} and H\"older's inequality, we show that the second summand on the right-hand side of \eqref{CHcalc8} fulfills the estimate
\begin{align}
    \label{CHcalc10}
	\abs{\intO c\:\tvv \cdot\Grad\mathcal{N}(\tc-\overline{\tc})\dx} 
    \leq \|\tvv\|\|c\|_{L^\infty(\Omega)}\|\Grad\mathcal{N}(\tc-\overline{\tc})\| 
	\leq \frac{1}{4}\|\tvv\|^2 + C\|\tc-\overline{\tc}\|_{\ast}^2.
\end{align}
Using \eqref{CHcalc9} and \eqref{CHcalc10} to estimate the right-hand side of \eqref{CHcalc8}, we infer
\begin{align}
    \label{CHcalc10*}
    \begin{aligned}
	- \intO\big(\vveps\cdot\Grad \ceps - \vv\cdot\Grad c\big)\mathcal{N}(\tc-\overline{\tc})\dx
    \le 
    \frac{\theta_0}{4}\|\tc-\overline{\tc}\|^2 
        + \frac{1}{4}\|\tvv\|^2 
        + C|\overline{\tc}|^2
        + C\|\tc-\overline{\tc}\|_{\ast}^2.
    \end{aligned}
\end{align}
Eventually, using \eqref{CHcalc7} and \eqref{CHcalc10*} to bound the right-hand side of \eqref{CHcalc1}, we conclude
\begin{align}
    \label{CHcalc11}
    \begin{aligned}
	&\ddt\frac{1}{2}\|\tc-\overline{\tc}\|_{\ast}^2 
        + \theta\|\tc\|^2
        + 2\mathcal{E}_\eps(\tc -\overline{\tc}) 
    \\
    &\leq  
	    \frac{1}{2}\|\mathcal{L}_\eps c 
        + \Delta c\|_{L^2(\Omega)}^2 
        + \frac{1}{4}\|\tvv\|^2 
        + \frac{3}{2}\theta_0 \norm{\tc -\overline{\tc}}^2 
        + C|\overline{\tc}|^2
        + C\|\tc-\overline{\tc}\|_{\ast}^2.
    \end{aligned}
\end{align}

\paragraph{Step 4: Completion of the proof.} 
Adding \eqref{NScalc11} and \eqref{CHcalc11} we obtain
\begin{align}\label{CHcalc12}
    \begin{aligned}
	&\ddt\Big(\frac{1}{2}\|\tvv\|_{\sigma}^2 
        + \frac{1}{2}\|\tc-\overline{\tc}\|_{\ast}^2\Big) 
        + \frac{1}{2}\|\tvv\|^2 
        + \theta\|\tc\|^2
        + \frac{3}{2}\mathcal{E}_\eps\big(\tc- \ov{\tc}\big) 
    \\
	&\quad \leq C\Big(\|\tvv\|_{\sigma}^2 
        + \|\tc-\overline{\tc}\|_{\ast}^2\Big) 
	    + \|\mathcal{L}_\eps c 
        + \Delta c\|^2 
        + 2\theta_0\|\tc-\overline{\tc}\|^2 
        + C|\overline{\tc}|^2
    \end{aligned}
\end{align}
in $[0,T]$.
Recalling the definition of $\eps_s$ in \eqref{poinn}, 
applying Lemma~\ref{LEM:EN}\ref{EN:IEQ:2} with $\gamma=\frac{1}{2\theta_0}$ yields
\begin{align*}
    2\theta_0\|\tc-\overline{\tc}\|^2 \leq \mathcal{E}_\eps(\tc-\overline{\tc}) + C\|\tc-\overline{\tc}\|_{\ast}^2
\end{align*}
as we consider $\eps\in (0,\eps_s]$. Plugging this estimate into \eqref{CHcalc12} and recalling 
\begin{equation*}
    \ov{\tc}(t) = \ov{\tc_0} \;\;\text{for all $t\in[0,T]$}
    \quad\text{and}\quad
    \mathcal{E}_\eps(\tc) = \mathcal{E}_\eps\big(\tc-\ov{\tc}\big),
\end{equation*}
we infer
\begin{align}\label{CHcalc13}
    \begin{aligned}
	&\ddt\Big(\frac{1}{2}\|\tvv\|_{\sigma}^2 
        + \frac{1}{2}\|\tc-\overline{\tc}\|_{\ast}^2\Big) 
        + \frac{1}{2}\|\tvv\|^2 
        + \theta\|\tc\|^2
        + \frac{1}{2}\mathcal{E}_\eps(\tc)
    \\
	&\quad \leq C\Big(\|\tvv\|_{\sigma}^2 
        + \|\tc-\overline{\tc}\|_{\ast}^2\Big) 
	    +\|\mathcal{L}_\eps c 
        + \Delta c\|^2 
        + C|\ov{\tc_0}|^2
    \end{aligned}
\end{align}
in $[0,T]$.
Thus, Gronwall's lemma implies
\begin{align}
    \begin{aligned}
	&\frac{1}{2}\sup_{t\in[0,T]}\|\tvv(t)\|_{\sigma}^2 
        + \frac{1}{2}\sup_{t\in[0,T]}\|\tc(t) 
        - \overline{\tc}(t)\|_{\ast}^2
    \\
    & + \theta\int_0^T\|\tc\|^2\dt  
        + \frac{1}{2} \int_0^T\norm{\tvv}^2\dt 
        + \frac 1{2}\int_{0}^T\mathcal{E}_\eps(\tc)\dt 
    \\
	&\leq \left(\frac{1}{2}\|\tvv_0\|_{\sigma}^2 
        + \frac{1}{2}\|\tc_0\|_{\ast}^2 
        + C\int_{0}^T|\overline{\tc_0}|^2\dt 
        + \int_{0}^T\|\mathcal{L}_\eps c 
        + \Delta c\|^2\dt\right)e^{CT}.
    \end{aligned}
\end{align}
Now, since $c\in L^\infty(0,T;H^3(\Omega))$ (see~\eqref{regularity_c} and \eqref{regularity_c2}), Proposition \ref{PROP:NLTL} yields
\begin{equation*}
    \int_{0}^T\|\mathcal{L}_\eps c + \Delta c\|^2\leq C\eps^{2\alpha}\norm{c}_{L^2(0,T;H^3(\Omega)}^2
    \leq C\eps^{2\alpha},
\end{equation*}
where $\alpha=\frac 12$ in case $\Omega$ is a bounded domain and $\alpha=1$ if $\Omega=\TTn$. Together with assumption \eqref{initiald:2}, we thus have
\begin{align}
    \label{CR:1}
	&\nonumber\frac{1}{2}\sup_{t\in[0,T]}\|\tvv(t)\|_{\sigma}^2 
        + \frac{1}{2}\sup_{t\in[0,T]}\|\tc(t) - \overline{\tc}(t)\|_{\ast}^2
    \\
    & + \theta\|\tc\|_{L^2(0,T;L^2(\Omega))}^2  
        + \frac{1}{2} \norm{\tvv}_{L^2(0,T;L^2(\Omega))}^2
        + \frac 1{2}\int_{0}^T\mathcal{E}_\eps(\tc)\dt 
	\leq C \eps^{2\alpha}.
\end{align}
As the norms $\norm{\cdot}_\sigma$ and $\norm{\cdot}_{\Vsigma'}$ on $\Vsigma'$ are equivalent, this also yields
\begin{equation}
    \label{CR:2}
    \norm{\tvv}_{L^\infty(0,T;\Vsigma')} 
    \leq C \sup_{t\in[0,T]}\|\tvv(t)\|_{\sigma}^2
    \leq C \eps^{\alpha}.
\end{equation}
Moreover, since the norms $\norm{\cdot}_*$ and $\norm{\cdot}_{H^1(\Omega)'}$ on $H_{(0)}^{-1}(\Omega)$ are equivalent, we further have
\begin{equation}
    \label{CR:3}
    \begin{aligned}
    \norm{\tc}_{L^\infty(0,T;H^1(\Omega)')} 
    &\leq \norm{\tc - \ov{\tc}}_{L^\infty(0,T;H^1(\Omega)')} + \norm{\ov{\tc}}_{L^\infty(0,T)} 
    \\
    &\leq \sup_{t\in[0,T]}\|\tc(t) - \overline{\tc}(t)\|_{\ast}^2 +  \abs{\ov{\tc_0}}
    \leq C \eps^{\alpha}.
    \end{aligned}    
\end{equation}
Combining \eqref{CR:1}--\eqref{CR:3}, we have thus verified estimate \eqref{finale}. Hence, the proof is complete.
$\phantom{x}$\hfill$\Box$

% \newpage

%%%%%%%%%%%%%%%%%%%%%%%%%%%%%%%%%%%%%%%%%%%%%%%%%%%%%%%%%%%%%%%%%%%%%%%%%%%%%%%%%
%********************************************************************************
% END OF DOCUMENT
%********************************************************************************
%%%%%%%%%%%%%%%%%%%%%%%%%%%%%%%%%%%%%%%%%%%%%%%%%%%%%%%%%%%%%%%%%%%%%%%%%%%%%%%%%

\section*{Acknowledgement}

The authors want to warmly thank Helmut Abels for helpful and inspiring discussions. In particular, part of this work was done while AP was visiting Helmut Abels and Harald Garcke at
the Faculty for Mathematics of the University of Regensburg, whose hospitality is kindly acknowledged. 

CH and PK were partially supported by the RTG 2339 ``Interfaces, Complex Structures, and Singular Limits'' of the Deutsche Forschungsgemeinschaft (DFG, German Research Foundation). Moreover, PK was partially supported by the project 524694286 of the Deutsche Forschungsgemeinschaft (DFG, German Research Foundation).  

AP is  a member of Gruppo Nazionale per l’Analisi Matematica, la Probabilità e le loro Applicazioni (GNAMPA) of
Istituto Nazionale per l’Alta Matematica (INdAM). This research was funded in whole or in part by the Austrian Science Fund (FWF) \href{https://doi.org/10.55776/ESP552}{10.55776/ESP552}. 

For open access purposes, the authors have applied a CC BY public copyright license to any author accepted manuscript version arising from this submission.

% \newpage

%%%%%%%%%%%%%%%%%%%%%%%%%%%%%%%%%%%%%%%%%%%%%%%%%%%%%%%%%%%%%%%%%%%%%%%%%%%%%%%%%
%********************************************************************************
% REFERENCES
%********************************************************************************
%%%%%%%%%%%%%%%%%%%%%%%%%%%%%%%%%%%%%%%%%%%%%%%%%%%%%%%%%%%%%%%%%%%%%%%%%%%%%%%%%

\footnotesize

\bibliographystyle{abbrv}
\bibliography{HKP}

\end{document}